\newcommand{\Aut}{\textrm{Aut}}
\newcommand{\bs}[1]{\boldsymbol{#1}}
\newcommand{\Ext}{\mathrm{Ext}}
\newcommand{\Exp}{\mathrm{Exp}}
\newcommand{\V}{\mathrm{V}}
\newcommand{\Hom}{\mathrm{Hom}}
\newcommand{\Mod}{\mathrm{Mod}}
\newcommand{\Reg}{\mathrm{Reg}}
\renewcommand{\H}{\mathrm{H}}
\newcommand{\M}{\mathrm{M}}
\newcommand{\lr}[1]{\langle#1\rangle}
\newcommand{\f}[1]{(\!(#1)\!)}
\newcommand{\Mon}{\mathrm{\mathbf{Mon}}}
\newcommand{\Rep}{\mathrm{Rep}}
\newcommand{\Rm}{\mathrm{\mathbf{R}}}
\newcommand{\Sol}{\textrm{Sol}}
\newcommand{\sm}[1]{\begin{smallmatrix}#1\end{smallmatrix}}
\newcommand{\simto}{\xrightarrow{\;\sim\;}}
\newcommand{\comm}[1]{
{\noindent\begin{color}{red} \framebox{\framebox{\framebox{
\begin{minipage}{450pt}#1
\end{minipage}}}}\end{color}}
}
\def\swappedhead#1#2#3{%
  \thmname{#1}\;%
  \thmnumber{\@upn{\the\thm@headfont#2\@ifnotempty{#1}}}%
  \thmnote{\,{\the\thm@notefont(#3)}}{.~}}
\newtheoremstyle{dotless-thm}
  {10pt}
  {10pt}
  {\itshape}
  {}
  {\bfseries}
  {}
  {.0em}
  {}
\theoremstyle{dotless-thm}
\newtheorem{theorem}{\textbf{Theorem}}[subsection]
\newtheorem{def-intro}{\textbf{\textsc{Definition}}}
\newtheorem{thm-intro}{\textbf{\textsc{Theorem}}}
\newtheorem{rk-intro}{\textbf{\textsc{Remark}}}
\newtheorem{cor-intro}[thm-intro]{\textbf{\textsc{Corollary}}}
\newtheorem{Ex-intro}{\textbf{\textsc{Example}}}
\newtheorem{proposition}[theorem]{\textbf{Proposition}}
\newtheorem{lemma}[theorem]{\textbf{Lemma}}
\newtheorem{corollary}[theorem]{\textbf{Corollary}}
\newtheorem{definition}[theorem]{\textbf{Definition}}
\newtheorem{remark}[theorem]{\textbf{Remark}}
\newtheorem{example}[theorem]{\textbf{Example}}
\newtheorem{hypothesis}[theorem]{\textbf{Hypothesis}}
\newtheorem{notation}[theorem]{\textbf{Notation}}
\newcounter{qstct}
\newcounter{numero-obj}
\newcounter{numero-er}
\title{Regular singular differential modules over differential rings}
\author{Andrea Pulita}
\date{\today}
\begin{document}
\maketitle
\begin{abstract}
We obtain Fuchs decomposition theorem for regular singular differential 
modules over a large class of differential rings. We provide a definition of 
regularity inspired by differential Galois theory and we deduce the classical 
equivalence with vector spaces endowed with an automorphism.
\end{abstract}
\makeatletter
\renewcommand\tableofcontents{%
    \subsection*{\contentsname}%
    \@starttoc{toc}%
    }
\makeatother

\begin{small}
\setcounter{tocdepth}{1} \tableofcontents
\end{small}

\setcounter{section}{0}


\section*{\textsc{Introduction}}
\addcontentsline{toc}{section}{\textsc{Introduction}}
A linear homogeneous ordinary differential 
equation\footnote{The notation $y^{(n)}$ means the 
$n$-th derivative $(\frac{d}{dt})^n(y)$.}
\begin{equation}\label{eq-intro : equation}
y^{(n)}(t)+
f_{n-1}(t)y^{(n-1)}(t)+\cdots+f_1(t)y'(t)+f_0(t)y(t)\;=\;0
\end{equation}
with coefficients $f_i(t)$ in the field of Laurent power series 
$\mathbb{C}\f{t}$ over the complex numbers 
$\mathbb{C}$ may have no formal power series solutions $y(t)\in\mathbb{C}\f{t}$. The classical theory of exponents (see, for instance, \cite{Ince}), investigates the possibility of having a solution 
of this equation in the form
\begin{equation}\label{eq : sol t^a}
y\;=\;t^e(a_0+a_1t+a_2t^2+\cdots)
\end{equation}
where $e\in\mathbb{C}$ and $a_i\in\mathbb{C}$. 
Here, $ t^e $ serves as a symbolic representation that, in the framework of 
differential Galois theory, can be understood as an abstract element within a 
suitable differential field extension of $ \mathbb{C}\f{t}$ (cf. \cite{VS}).
In case such a solution exists, the number 
$e\in\mathbb{C}$ is called an 
\emph{exponent} of the equation. 
The differential equation is called \emph{regular singular}, or simply regular, if all its solutions 
are polynomial expressions of $\log(t)$ and of solutions of type 
\eqref{eq : sol t^a} (cf. \cite[Exercises 3.13 and 3.29]{VS}). 
The fact that \eqref{eq-intro : equation} 
is regular singular can be tested directly, by looking at 
the order of poles (i.e. the $t$-adic valuation) of the coefficients $f_i$ 
(cf. \cite[Definition 3.14 and Proposition 3.16]{VS}). 
This last characterization is indeed often taken as the 
definition of regular singular, while the fact of having solution of the 
prescribed form is considered as an (equivalent) consequence.

A more algebraic approach lead to consider 
\emph{differential modules}, i.e. modules over the Weyl algebra 
(cf. \cite{Levelt-Hypergeom, Deligne-reg-sing, Katz-Nilpotent, VS}). 
In this case, when the exponents exist, 
only their image in $\mathbb{C}/\mathbb{Z}$ is invariant 
by base changes in the differential module. Indeed, we may multiply 
\eqref{eq : sol t^a} by $t^n$ and obtain another solution of the same 
differential module (in another basis). 
Differential modules over $\mathbb{C}\f{t}$ are completely classified 
(cf. \cite{VS} and therein bibliography). 
In particular, every differential module over $\mathbb{C}\f{t}$ 
is direct sum of a regular differential module 
and a completely irregular module (i.e. a module without 
solutions of the form \eqref{eq : sol t^a}). 
Moreover, regular singular differential modules over $\mathbb{C}\f{t}$ 
admit the so called \emph{Fuchs decomposition}, 
that is, the differential module admits a Jordan-Hölder sequence of rank 
one regular singular differential modules that are associated (up to isomorphism) 
with a simple equation of the form
\begin{equation}\label{eq : y=t^e intro}
y'(t)=\frac{e}{t}y(t)\;,\quad y(t)=t^e\;.
\end{equation}
Exponents are crucial invariants of the differential module. 
For instance, they are useful to classify 
differential modules, to prove algebraicity of their solutions, or to compute 
 their monodromy representations. Indeed, the differential Galois group of the category of regular modules over 
$\mathbb{C}\f{t}$ is the algebraic envelop of $\mathbb{Z}$, 
whose generator is given by the formal monodromy operator 
(cf. \cite[Section 3.2]{VS}). In other words, the category of regular modules 
over 
$\mathbb{C}\f{t}$ is equivalent to that of $\mathbb{C}$-linear finite 
dimensional representations of $\mathbb{Z}$, 
which are nothing but finite dimensional $\mathbb{C}$-vector spaces 
together with an automorphism (the monodromy operator). In this 
correspondence, Fuchs decomposition theorem corresponds to Jordan 
decomposition of endomorphisms by eigenspaces, and the multiset of 
exponents of a regular differential module coincide, in a suitable sense, 
with the logarithm of the multiset of eigenvalues of the automorphism of 
the corresponding representation of $\mathbb{Z}$.

The problem of finding solutions of the form \eqref{eq : sol t^a}
has a interest over differential rings 
that are more general than the field $\mathbb{C}\f{t}$. 
For instance, similar results exist for differential modules 
over fields of Laurent power series that are convergent on 
some unspecified complex or $p$-adic disk with zero removed (cf. 
\cite{Clark,Clark-2, VDP-liouville, Ba, VDP-liouville-2}) 
or over the ring of analytic functions over a $p$-adic annulus 
(cf. \cite{Ch-Me-II, Dwork-exponents, Kedlaya-draft, Kedlaya-draft-bis}).

In this last context, G. Christol and Z. Mebkhout established the $p$-adic analog of the Fuchs decomposition theorem for a specific class of differential equations satisfying the Robba condition on a $p$-adic annulus, under additional technical assumptions on the exponents. In particular, the differences between the exponents are required to be $p$-adic non-Liouville numbers (see \cite{Ch-Me-II}). 

The Robba condition requires the Taylor solutions of the differential equation to converge maximally at every point of the annulus, including those in extensions of the base field. Differential modules satisfying the Robba condition are, \emph{a priori}, not necessarily cyclic\footnote{To the best of our knowledge, no explicit examples of non-cyclic modules have been constructed.}. This means they may not correspond to a differential equation of the form \eqref{eq-intro : equation}. 

Even when such modules are cyclic, the Robba condition, as well as the constraints on the $p$-adic exponents, cannot (at present) be expressed in terms of the valuations of the coefficients $f_i$. For example, consider the simple differential equation \eqref{eq : y=t^e intro}. A change of basis in the associated differential module may transform it into an equation of the form \( y' = g(t)y \), where \( g(t) \) has arbitrarily large poles at \( t = 0 \). 

A similar phenomenon occurs under changes of the coordinate \( t \) in the $p$-adic annulus, further complicating the situation.

Therefore, it does not seem reasonable (up to today) to expect 
to describe the exponents, or even only the fact of being 
``\emph{regular singular}'', in terms of the coefficients $f_i$ of 
\eqref{eq-intro : equation}. Nevertheless, the $p$-adic Fuchs theorem by Christol and Mebkhout demonstrates that the solutions are polynomial combinations of $\log(t)$ and expressions of the form $t^e \cdot f(t)$, where $f(t)$ is an analytic function defined over the $p$-adic annulus, and $e \in \mathbb{Z}_p$ is a $p$-adic integer.\footnote{Thanks to the Fuchs decomposition, one can further establish \emph{a fortiori} that these modules are, in fact, cyclic.}
However, the class of differential modules considered by Christol and 
Mebkhout is somewhat restrictive. In particular, the Robba condition 
excludes simple examples of differential equations, such as 
\eqref{eq : y=t^e intro}, where $e \notin \mathbb{Z}_p$. While the Robba 
condition plays a crucial role in the technical framework of their proof of 
the Christol-Mebkhout $p$-adic Fuchs theorem, it may not be the most 
practical criterion for defining regular singular differential equations. 
Indeed, as there exists explicit examples of rank 2 differential modules 
satisfying the Robba property which have no non trivial sub-quotients (cf. 
\cite{Ch-exponent}), it follows that the Tannakian group 
of the category of differential equations of type Robba is 
strictly larger than the algebraic envelop of $\mathbb{Z}$.

The following problem naturally arises: \emph{Let $K$ be an 
algebraically closed field of characteristic zero and $A$ a 
$K$-algebra with derivation containing the ring of Laurent polynomials 
$K[t,t^{-1}]$. 
What is a convenient definition of regular singular differential module over 
$A$ capturing the largest class of differential modules ? Can we identify 
minimal assumptions on $A$ that allow for the definition of exponents and 
ensure a Fuchs decomposition?}

To address this question, inspired by differential Galois theory over $\mathbb{C}\f{t}$, we define regular singular differential modules over a differential ring $A$ as those whose abstract solutions involve polynomials in $\log(t)$ and terms of the form $t^e \cdot f$, where $f \in A$ (cf. Definition \ref{Def : reg sing diff mod}). Remarkably, we show that minimal assumptions on $A$ suffice to ensure the Fuchs decomposition theorem. Specifically, it is enough that the only solutions in $A$ of $\partial^2(y) = 0$ are constants and that $\partial(y) + ay = 0$ has no nonzero solutions in $A$ for any $a \in K \setminus \mathbb{Z}$ (cf. Definition \ref{Def : diff alg no exp nor log}), ensuring that $\log(t)$ and $t^a$ do not already belong to $A$. Under these assumptions, we establish an equivalence 
between the category of regular singular differential modules over $A$ 
and the category of 
$K$-linear representations of $\mathbb{Z}$, i.e., $K$-vector spaces 
equipped with a $K$-linear automorphism (cf. Theorem 
\ref{Thm. Equivalence}). Consequently, the Tannakian 
group associated with the category of regular singular modules is 
identified as the algebraic envelope of $\mathbb{Z}$, mirroring the 
classical case. In particular, every regular singular module admits a 
Jordan-Hölder sequence made by rank one regular singular sub-quotients.

\if{
\begin{thm-intro}[\protect{cf. Theorem \ref{}}]
\end{thm-intro}

\comm{Migliorare}

...

...

....

Completare ...

...

...
}\fi

\section{General Notation}\label{Section : general notation}
We denote by $\mathbb{Z}=\{0,1,-1,2,-2,\ldots\}$ the 
ring of integer numbers and by $\mathbb{Q}$ its fraction 
field, the field or rational numbers. We denote by 
$\mathbb{R}$ the field of real numbers\footnote{The 
completion of $\mathbb{Q}$ with respect to the 
archimedean absolute value.} and by 
$\mathbb{C}$ that of complex numbers\footnote{The 
algebraic closure of $\mathbb{R}$.}.

If $R$ is a ring, we denote by $R^\times:=\{r\in R\;\textrm{ such that }\;\exists s\in R, sr=1\}$ the group of 
invertible elements of $R$.

In this paper $K$ will be an algebraically closed field of 
characteristic $0$. We denote by $(K,+)$, or simply $K$, 
its additive group and by $K^\times=K-\{0\}$ its 
multiplicative group. 

We denote by $\mu_\infty\subset K^\times$ the 
subgroup of all roots of unity in $K$.

\begin{hypothesis}\label{ZFC}
We assume in the paper the axiom of choice (ZFC) 
in Zermelo-Fraenkel set theory, 
so that every infinite cardinal is 
the infinite cardinals of well-ordered sets $\omega$ and 
this transfinite sequence is totally ordered.
\end{hypothesis}
If $L:V\to V$ is a $K$-linear endomorphism of vector 
space we denote by
\begin{eqnarray}
V^{L=0}&\;:=\;&\mathrm{Ker}(L:V\to V)\;\\
V^{L=1}&\;:=\;&\mathrm{Ker}(L-\textrm{id}:V\to V).
\end{eqnarray}
Let $R$ be a ring.
If $I$ is a set of indexes and $X=\{X_i\}_{i\in I}$ is a 
family of variables, we denote by $R[\{X_i\}_{i\in I}]$ 
the ring of polynomials in the variables 
$\{X_i\}_{i\in I}$ (cf. 
\cite[Chapitre III, \S 2, N.9]{Bou-Alg-II}). 

Let $(G,\cdot)$ be a group, written 
multiplicatively. We denote by 
$R\lr{G}$
the \emph{group ring of $G$ with coefficients in $R$}. 
Recall that $R\lr{G}$ is a free $R$-module with a 
basis given by a 
family of symbols 
$\{\lr{g}\}_{g\in G}$ indexed by $G$ 
(i.e. $R\lr{G}=\oplus_{g\in G}R\lr{g}$), 
and that the 
multiplication law is determined by the relation 
$\lr{g}\lr{h}=\lr{gh}$ extended by linearity:
\begin{equation}
(\sum_{g\in G}a_g\lr{g})\cdot(\sum_{g\in G}b_g\lr{g})\;=\;\sum_{g\in G}(\sum_{h\in G}a_hb_{h^{-1}g})\lr{g}\;,\quad \quad a_g,b_g\in R\;. 
\end{equation}
If $G$ is commutative, we will also use another notation which will be 
more evocative for our purposes
\begin{equation}
t^g\;=\;\lr{g}\;
\end{equation}
so that every element $x\in R\lr{G}$ can be uniquely 
written as $\sum_{g\in G}r_gt^g$, with $r_g\in R$. 
In particular, if $G=\mathbb{Z}$, then
the group ring $R\lr{\mathbb{Z}}$ is naturally 
isomorphic to the ring $R[t,t^{-1}]$, that is, the 
localized of the polynomial ring
$R[t]$, in one variable, along the multiplicative subset 
$S=\{t^n\}_{n\geq 0}$.
We denote by 
\begin{eqnarray}
\partial_t\;=\;t\frac{d}{dt}
\end{eqnarray}
the derivation on $R[t,t^{-1}]$ given by 
\begin{equation}\label{eq : def partialt}
\partial_t\Bigl(\sum_{i\in\mathbb{Z}}r_it^i\Bigr)\;=\;
\sum_{i\in\mathbb{Z}}r_i\cdot i\cdot t^i\;.
\end{equation}
We also write $\partial=\partial_t$ if no confusion is 
possible.

In the next sections we define \emph{regular singular 
differential modules} over a differential ring $A$, 
satisfying some minimal properties, and 
we prove that regular singular modules over $A$ 
form a Tannakian category whose
group is the algebraic envelop of $\mathbb{Z}$. 

\section{The ring of exponents}
We consider the group algebra $K\lr{K}$ of the additive 
group $(K,+)$. 
\if{
It is a vector 
space with basis $\{t^a\}_{a\in K}$, where $t^a$ is 
merely a symbol, so that every 
element in $K\lr{K}$ can be uniquely written as a finite sum 
$\sum_{a\in K}b_at^a$ with $b_a\in K$. It is a ring 
under the rule $t^{a_1}t^{a_2}=t^{a_1+a_2}$. 
}\fi
In order to emphasize our link with the theory of 
differential equations for every set $I$ we set 
$t^I=\{t^a\}_{a\in I}$ and we introduce the notation
\begin{eqnarray}
K\lr{K}\;=\;K[\{t^a\}_{a\in K}]\;=\;K[t^K]\;.
\end{eqnarray}
The inclusion $(\mathbb{Z},+)\subset (K,+)$ provides an 
inclusion
\begin{eqnarray}\label{eq : KZ-KK}
K[t,t^{-1}]\;\subseteq\;K[t^K]\;.
\end{eqnarray} 
where 
$K[t,t^{-1}]=K\lr{\mathbb{Z}}=K[t^{\mathbb{Z}}]$ 
is the classical ring of Laurent polynomials with coefficients in $K$. 
Now, let us 
chose a set theoretical section of the projection $K\to K/\mathbb{Z}$ and denote its image by
\begin{eqnarray}\label{eq : widetilde K/Z}
\widetilde{K/\mathbb{Z}}\subseteq K\;.
\end{eqnarray}
It is a set of representatives in $K$ of the elements of 
$K/\mathbb{Z}$. By convenience of notation 
we chose $0\in \widetilde{K/\mathbb{Z}}$ as the 
lifting of $0\in K/\mathbb{Z}$. 
Then $K[t^K]$ is a free 
$K[t,t^{-1}]$-module with basis 
$t^{\widetilde{K/\mathbb{Z}}}$. 
In particular, every element $x\in 
K[t^K]$ can be uniquely written as 
\begin{eqnarray}\label{uniquenedd K}
x\;=\;\sum_{a\in \widetilde{K/\mathbb{Z}}}f_a(t) 
t^a\;,\qquad f_a(t)\in K[t,t^{-1}]\;.
\end{eqnarray}
Let now $A$ be a commutative ring with unit element 
containing $K[t,t^{-1}]$ as a subring. 
De note the inclusion by
\begin{equation}\label{eq: rho}
\rho\;:\;K[t,t^{-1}]\xrightarrow{\quad} A\;.
\end{equation}
Recall that a \emph{differential ring} is a pair $(A,\partial)$ where 
$A$ is a commutative ring with unit element and 
$\partial:A\to A$ is a derivation. 
\begin{hypothesis}\label{Hyp : A is a diff ring}
We assume from now on that $A$ is a differential ring whose derivation 
$\partial:A\to A$ extends $t\frac{d}{dt}:K[t,t^{-1}]\to K[t,t^{-1}]$ 
(cf. \eqref{eq : def partialt}).\footnote{That is $\partial(f)=t\frac{d}{dt}(f)$, for all $f\in K[t,t^{-1}]$.}
\end{hypothesis}
We set
\begin{eqnarray}
A[t^K]\;:=\;A\otimes_{K[t,t^{-1}]}K[t^K]\;.
\end{eqnarray}
It is a free $A$-module with basis 
$t^{\widetilde{K/\mathbb{Z}}}$, so every element $x\in 
A[t^K]$ can be uniquely written as 
\begin{eqnarray}\label{x written as At^K}
x\;=\;\sum_{a\in \widetilde{K/\mathbb{Z}}}f_a 
\cdot t^a\;,\qquad f_a\in A\;.
\end{eqnarray}
Let $\ell$ be a new indeterminate. 
We may form the ring of polynomials in $\ell$ 
with coefficients in the ring $A[t^K]$.
\begin{eqnarray}
E_A\;:=\;A[t^K][\ell]\;.
\end{eqnarray}
It is a free $A$-module with basis $t^{\widetilde{K/\mathbb{Z}}}\cup \{\ell^n\}_{n\geq 1}$ and every element $x\in E_A$ can be uniquely written 
as
\begin{equation}\label{eq: x written in E_A}
x\;=\;\sum_{k=0}^n\Bigl(\sum_{a\in\widetilde{K/\mathbb{Z}}}f_{a,k}\cdot t^{a}\Bigr)\ell^k\;=\;\sum_{a\in\widetilde{K/\mathbb{Z}},\;\; k=0,\ldots,n}f_{a,k}\cdot t^{a}\cdot\ell^k\;, \quad\qquad f_{a,k}\in A\;.
\end{equation}
\begin{definition}
We call $E_A$ the ring of exponents with respect to 
$A$.
\end{definition}

\if{

It may be convenient to write the value of $\partial$ on 
an element  $x\in E_A$ written as in 
\eqref{eq: x written in E_A}
\begin{equation}
\partial(x)\;=\;
\Bigl(\sum_{k=0}^{n-1}
\Bigl(\Bigl(\sum_{a\in\widetilde{K/\mathbb{Z}}}(\partial(f_{a,k})+a\cdot f_{a,k})t^a\Bigr) - 
(k+1)\Bigl(\sum_{a\in\widetilde{K/\mathbb{Z}}}f_{a,k+1}\cdot t^a\Bigr)\Bigr)\ell^k\Bigr)+\Bigl(\sum_{a\in\widetilde{K/\mathbb{Z}}}(\partial(f_{a,k})+a\cdot f_{a,k})t^a\Bigr)\ell^n
\end{equation}

}\fi
The following Lemma shows that $\ell$ plays the role of 
the usual functions $\log(t)$ in classical analysis and it justifies the notation 
$t^a$ for the elements of the group algebra.
\begin{lemma}\label{Lemma : Existence of partial}
The ring $E_A$ has a unique derivation, still denoted by
\begin{eqnarray}
\partial\;:\;E_A\to E_A,
\end{eqnarray}
extending $\partial:A\to A$ such that
\begin{enumerate}
\item for all $a\in K$, $a\neq 0$ one has 
$\partial(t^a)=at^a$; 
\item $\partial(\ell)=1$.
\end{enumerate}
Moreover $\partial$ stabilizes globally the subring $A[t^K]$  
and it induces a derivation on it still denote by $\partial$. 
\end{lemma}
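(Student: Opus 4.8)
The plan is to construct $\partial$ explicitly on the free $A$-module $E_A = A[t^K][\ell]$ using the prescribed basis $t^{\widetilde{K/\mathbb{Z}}} \cup \{\ell^n\}_{n \geq 1}$, and then verify it is a derivation and that it is unique. First I would define $\partial$ on $A[t^K]$: since $A[t^K] = A \otimes_{K[t,t^{-1}]} K[t^K]$ is free over $A$ with basis $t^{\widetilde{K/\mathbb{Z}}}$, I declare $\partial\bigl(\sum_{a \in \widetilde{K/\mathbb{Z}}} f_a t^a\bigr) := \sum_a \bigl(\partial(f_a) + a f_a\bigr) t^a$, where $\partial(f_a)$ is the given derivation on $A$. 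One checks this is $A$-Leibniz: for $g \in A$ and $x = \sum f_a t^a$ one has $\partial(gx) = \sum (\partial(g f_a) + a g f_a) t^a = \partial(g) x + g \partial(x)$, using that multiplication by $g$ acts coefficientwise. The genuinely non-trivial point is full multiplicativity $\partial(xy) = \partial(x) y + x \partial(y)$ for $x, y \in A[t^K]$, because the product of two basis elements $t^{a_1} t^{a_2} = t^{a_1 + a_2}$ need not be a basis element: writing $a_1 + a_2 = a_3 + n$ with $a_3 \in \widetilde{K/\mathbb{Z}}$, $n \in \mathbb{Z}$, one has $t^{a_1+a_2} = t^n t^{a_3}$ with $t^n \in K[t,t^{-1}] \subseteq A$, and the prescribed formula gives $\partial(t^{a_1+a_2}) = (\partial(t^n) + a_3 t^n) t^{a_3} = (n t^n + a_3 t^n) t^{a_3} = (a_1 + a_2) t^{a_1 + a_2}$. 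So in fact $\partial(t^a) = a t^a$ for \emph{all} $a \in K$, consistently, and Leibniz on products of basis elements reduces to $a_1 + a_2 = a_1 + a_2$; the general case follows by $A$-bilinearity. This is the main obstacle, and it is resolved precisely by the compatibility of the section $\widetilde{K/\mathbb{Z}}$ with the $\mathbb{Z}$-action.

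Next I extend to $E_A = (A[t^K])[\ell]$, the polynomial ring in one variable $\ell$ over the ring $B := A[t^K]$. For any derivation $\partial_B$ on a commutative ring $B$ and any choice of element $b_0 \in B$, there is a unique derivation on $B[\ell]$ restricting to $\partial_B$ and sending $\ell \mapsto b_0$; here I take $b_0 = 1$, so $\partial\bigl(\sum_k x_k \ell^k\bigr) = \sum_k \partial_B(x_k) \ell^k + \sum_k k\, x_k \ell^{k-1}$. That this is a derivation on $B[\ell]$ is the standard verification (check Leibniz on monomials $x \ell^i$, $y \ell^j$, extend by bilinearity), and it visibly restricts to $\partial_B$ on $B = A[t^K]$ and satisfies $\partial(\ell) = 1$, giving the claimed global stabilization of $A[t^K]$.

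For uniqueness: any derivation $\partial'$ of $E_A$ extending $\partial$ on $A$ and satisfying (1) and (2) is determined on the $A$-module generators. Indeed $\partial'$ is additive; on a product $f_{a,k} \cdot t^a \cdot \ell^k$ the Leibniz rule forces $\partial'(f_{a,k} t^a \ell^k) = \partial(f_{a,k}) t^a \ell^k + f_{a,k}\bigl(\partial'(t^a)\ell^k + t^a \partial'(\ell^k)\bigr)$, with $\partial'(t^a) = a t^a$ for $a \neq 0$ (and $\partial'(t^0) = \partial'(1) = 0$) by (1), and $\partial'(\ell^k) = k \ell^{k-1}\partial'(\ell) = k\ell^{k-1}$ by (2) and iterated Leibniz. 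Hence $\partial'$ agrees with the $\partial$ just constructed on every element of the form \eqref{eq: x written in E_A}, i.e. everywhere, which gives uniqueness and completes the proof. I would remark that $\partial$ restricted to $A[t^K]$ is the derivation built in the first step, and that the displayed formula for $\partial(x)$ commented out in the text is an immediate unwinding of these definitions.
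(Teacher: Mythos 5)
Your proof is correct and follows essentially the same route as the paper: define $\partial$ on $A[t^K]$ coefficientwise by $\partial\bigl(\sum_a f_a t^a\bigr)=\sum_a(\partial(f_a)+af_a)t^a$ using uniqueness of the basis expansion, then extend to the polynomial ring in $\ell$ with $\partial(\ell)=1$. You merely spell out the multiplicativity check (via $t^{a_1+a_2}=t^n t^{a_3}$, using that $\partial$ extends $t\frac{d}{dt}$ on $K[t,t^{-1}]$) and the uniqueness argument, which the paper leaves implicit.
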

\begin{proof}
For every $x\in A[t^K]$ written as in 
\eqref{x written as At^K} we must have
\begin{eqnarray}\label{eq : x=sum_ainK/Z (partial(f_a)+af_s)t^a}
\partial(x)\;=\;\sum_{a\in\widetilde{K/\mathbb{Z}}}
(\partial(f_a)+af_a)\cdot t^a
\end{eqnarray}
The uniqueness of the coefficients in 
\eqref{x written as At^K} implies that this provides a 
well defined derivation on $A[t^K]$. It is also clear that 
it extends uniquely to a derivation of $E_A=A[T^K]\otimes_KK[\ell]$.
\end{proof}
Let us introduce a terminology for the class of rings that 
behave well for our purposes. Recall Hypothesis \ref{Hyp : A is a diff ring}.
\begin{definition}\label{Def : diff alg no exp nor log}
\label{Def : algebras without exp nor log}
We say that $A$ is a \emph{differential $K$-algebra 
without exponents nor logarithm} if 
the following conditions hold
\begin{enumerate}
\item\label{Def : diff alg no exp nor log-3} One has
\begin{equation}\label{eq : A^d^2=K}
A^{\partial^2=0}\;=\;K\;.
\end{equation}
\item\label{Def : diff alg no exp nor log-4} 
For all $a\in\widetilde{K/\mathbb{Z}}-\{0\}$, one 
has 
\begin{equation}\label{eq : A^partial+a=0}
A^{\partial+a\cdot \mathrm{id}=0}\;=\;0\;.
\end{equation}
\end{enumerate}
\end{definition}
\begin{remark}
For $\lambda,\mu\in K$ and $a\in K-\mathbb{Z}$, 
the elements $\lambda t^a\in E_A$ and 
$\lambda\ell+\mu\in E_A$ are solutions of the differential 
equations $\partial(y)=a y$ and $\partial^2(y)=0$ 
respectively.  In other words they belongs to the kernels 
of the operators $\partial-a$ and $\partial^2$ acting on $A$.
In classical differential Galois theory (over a given differential field), the 
dimension of the space of solutions of a linear differential 
operator of order $n$ is bounded by $n$, hence the above differential 
equations would not have further solutions in $A$.
However, if the ring $(A,\partial)$ is general, these differential 
equations may actually have more solutions in $A$. 
\emph{Differential $K$-algebra 
without exponents nor logarithm} are precisely those for which one 
has the ``\emph{right}'' solutions for those two differential equations.
\end{remark}
\begin{proposition}\label{Prop : kernel of partial}
The differential ring $A$ is a \emph{differential $K$-algebra 
without exponents nor logarithm} if, and only if, we have
\begin{equation}\label{eq : At^Kpartial=E_A^partial=K}
A^{\partial=0}\;=\;A[t^{K}]^{\partial=0}\;=\;E_A^{\partial=0}\;=\;K\;.
\end{equation}
\end{proposition}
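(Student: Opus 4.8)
The plan is to prove the chain of equalities by establishing the inclusions $K \subseteq A^{\partial=0} \subseteq A[t^K]^{\partial=0} \subseteq E_A^{\partial=0}$ trivially, and then closing the loop by showing $E_A^{\partial=0} \subseteq K$ under the ``no exponents nor logarithm'' hypothesis; the converse direction (that \eqref{eq : At^Kpartial=E_A^partial=K} implies the two conditions of Definition \ref{Def : diff alg no exp nor log}) will then be essentially a matter of unwinding definitions. First I would observe that since $\partial$ is $K$-linear (as $K \subseteq K[t,t^{-1}]$ consists of $\partial$-constants, by \eqref{eq : def partialt}), we always have $K \subseteq A^{\partial=0}$, and the inclusions $A^{\partial=0} \subseteq A[t^K]^{\partial=0} \subseteq E_A^{\partial=0}$ hold because $A \subseteq A[t^K] \subseteq E_A$ compatibly with $\partial$ by Lemma \ref{Lemma : Existence of partial}. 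So everything reduces to the implication: if $A$ has no exponents nor logarithm, then every $\partial$-constant of $E_A$ lies in $K$.

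For that implication, I would take $x \in E_A$, write it in the canonical form \eqref{eq: x written in E_A} as $x = \sum_{k=0}^n \bigl(\sum_{a} f_{a,k} t^a\bigr)\ell^k$ with $f_{a,k} \in A$, and compute $\partial(x)$ using Lemma \ref{Lemma : Existence of partial}: on the monomial $f_{a,k} t^a \ell^k$ one gets $(\partial(f_{a,k}) + a f_{a,k}) t^a \ell^k + k f_{a,k} t^a \ell^{k-1}$. Collecting by the (unique) basis $t^{\widetilde{K/\mathbb{Z}}} \cup \{\ell^n\}$, the condition $\partial(x) = 0$ becomes, for each fixed $a$, the recursion
\begin{equation}
\partial(f_{a,n}) + a f_{a,n} = 0, \qquad \partial(f_{a,k}) + a f_{a,k} + (k+1) f_{a,k+1} = 0 \quad (0 \le k < n).
\end{equation}
The strategy is then a descending induction on $k$ from $n$ down to $0$, using the two hypotheses of Definition \ref{Def : diff alg no exp nor log} to force the $f_{a,k}$ to vanish — or be constant in the right place. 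For the top index: if $a \neq 0$, condition \eqref{eq : A^partial+a=0} gives $f_{a,n} = 0$ directly; if $a = 0$, then $\partial(f_{0,n}) = 0$. I would first dispose of the $a \neq 0$ components: by descending induction, once $f_{a,k+1} = 0$ the equation for $f_{a,k}$ reads $\partial(f_{a,k}) + a f_{a,k} = 0$, again killed by \eqref{eq : A^partial+a=0}, so all $f_{a,k} = 0$ for $a \neq 0$. This leaves only $x = \sum_k f_{0,k} \ell^k$ with $\partial(f_{0,k}) + (k+1) f_{0,k+1} = 0$ and $\partial(f_{0,n}) = 0$.

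The main obstacle, and the place where condition \eqref{eq : A^d^2=K} (namely $A^{\partial^2=0} = K$) must be used with care, is this remaining purely-logarithmic part: I need to show $n \le 1$ and that $f_{0,1} \in K$, $f_{0,0} \in K$ is forced — actually I expect the conclusion to be $n=0$ and $f_{0,0} \in K$, or possibly that $x$ reduces to a constant after all. The point is that from $\partial(f_{0,n}) = 0$ we get $\partial^2(f_{0,n}) = 0$ so $f_{0,n} \in K$; if $n \ge 1$, then $\partial(f_{0,n-1}) = -n f_{0,n} \in K$, hence $\partial^2(f_{0,n-1}) = 0$, so $f_{0,n-1} \in K$ by \eqref{eq : A^d^2=K}, which forces $\partial(f_{0,n-1}) = 0$ and therefore $n f_{0,n} = 0$, i.e. $f_{0,n} = 0$; iterating downward collapses all higher terms and leaves $x = f_{0,0}$ with $\partial(f_{0,0}) = 0$, hence $\partial^2(f_{0,0})=0$, so $x = f_{0,0} \in K$. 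Thus $E_A^{\partial=0} = K$. Finally, for the converse, assuming \eqref{eq : At^Kpartial=E_A^partial=K}: given $a \in \widetilde{K/\mathbb{Z}} \setminus \{0\}$ and $y \in A$ with $\partial(y) + a y = 0$, the element $y\, t^a \in A[t^K]$ satisfies $\partial(y t^a) = (\partial(y) + a y) t^a = 0$, so $y t^a \in K$ by hypothesis; but $y t^a$ lies in the rank-one free summand $A \cdot t^a$ with $a \neq 0$, which meets $K$ only in $0$, so $y = 0$, giving \eqref{eq : A^partial+a=0}. And if $\partial^2(y) = 0$ for $y \in A$, then $\partial(y) \in A^{\partial=0} = K$, say $\partial(y) = c$; then $y\ell - \text{(correction)}$... more simply, the element $y - \partial(y)\ell \in E_A$ has derivative $\partial(y) - \partial(y) \cdot 1 = 0$ (since $\partial(y)=c$ is constant, $\partial(c\ell) = c$), so $y - c\ell \in E_A^{\partial=0} = K$; comparing $\ell$-degrees in the canonical form forces $c = 0$ and $y \in K$, giving \eqref{eq : A^d^2=K}. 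This completes both directions.
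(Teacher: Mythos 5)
Your proposal is correct and follows essentially the same route as the paper: expand a $\partial$-constant element of $E_A$ in the canonical basis, kill the $a\neq 0$ components via \eqref{eq : A^partial+a=0}, collapse the $\ell$-part via \eqref{eq : A^d^2=K}, and for the converse use the auxiliary elements $y\,t^a$ and $y-\partial(y)\ell$ exactly as in the paper's proof. The only (harmless) difference is organizational: the paper first proves $A[t^K]^{\partial=0}=K$ and then treats $E_A$ with coefficients in $A[t^K]$, whereas you run the double-indexed expansion in one pass, which in fact makes the application of $A^{\partial^2=0}=K$ to coefficients lying in $A$ slightly more transparent.
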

\begin{proof}
Let us assume that $A$ is a {differential $K$-algebra 
without exponents nor logarithm}. 
It is clear that \eqref{eq : A^d^2=K} implies $A^{\partial=0}=K$.

If $x\in A[t^K]$, we may write it as in 
\eqref{x written as At^K} and use \eqref{eq : x=sum_ainK/Z (partial(f_a)+af_s)t^a}. 
By assumption \eqref{eq : A^partial+a=0} and by uniqueness of the 
coefficients in \eqref{eq : x=sum_ainK/Z (partial(f_a)+af_s)t^a}, it follows that if 
$\partial(x)=0$ we must have $x=f_0$ with 
$\partial(f_0)=0$. By \eqref{eq : A^d^2=K} 
this implies $x=f_0\in K$.

If $x\in E_A$, we may write it in a unique way 
 as $x=\sum_{k=0}^nx_n\ell^n$, where 
$x_n\in A[t^K]$ (cf. \eqref{eq: x written in E_A}). 
Hence
\begin{eqnarray}
\partial(x)\;=\;
\Bigl(\sum_{k=0}^{n-1}
(\partial(x_k)+(k+1)x_{k+1})\ell^k\Bigr)
+\partial(x_n)\ell^n\;.
\end{eqnarray}
Again, by uniqueness of the coefficients in this 
expression, if $\partial(x)=0$ then we must 
have $\partial(x_n)=0$ and 
$\partial(x_k)=-(k+1)x_{k+1}$, for all $k=0,\ldots,n-1$. By the previous part of 
the proof $\partial(x_n)=0$ implies $x_n\in K$. This 
gives then $\partial(x_{n-1})\in K$, which implies 
$\partial^2(x_{n-1})=0$. By assumption \eqref{eq : A^d^2=K} we 
obtain $x_{n-1}\in K$. An induction then shows that 
$x_0,\ldots,x_n \in K$ and $x\in K[\ell]$ is a polynomial in $\ell$
with coefficients in $K$. It is then clear that if 
$\partial(x)=0$, then $x\in K$.

Let us now assume \eqref{eq : At^Kpartial=E_A^partial=K} 
and prove that $A$ satisfies items
 \eqref{eq : A^d^2=K} and \eqref{eq : A^partial+a=0}. 

If $f\in A$ is a non trivial solution to $\partial(f)+af=0$, then 
$\partial(ft^a)=0$ in $A[t^K]$. Since $A[t^K]^{\partial=0}=K$, it follows 
that $a=0$. This proves \eqref{eq : A^partial+a=0}.

Assume now that $f\in A$ satisfies $\partial^2(f)=0$. 
If $\partial(f)=0$, we are done because $A^{\partial=0}=K$ by assumption. 
If $\partial(f)\neq 0$, then 
$x:=f-\partial(f)\ell\in E_A$ is an element of $E_A$ 
not in $K$ and $\partial(x)=\partial(f)-(\partial^2(f)\ell+\partial(f))=0$. 
It follows that $x\in E_A^{\partial=0}=K$, and hence $f\in K$, contradicting the fact that $\partial(f)\neq 0$. The claim follows.
\end{proof}

\begin{remark}
Condition \eqref{eq : A^d^2=K} is equivalent to the 
union of the following two conditions
\begin{enumerate}
\item[(a)] $A^{\partial=0}=K$;
\item[(b)] If  $J$ is the image of the map $\partial:A\to A$, then $J\cap K=\{0\}$.
\end{enumerate}
Besides, 
it is not hard to show that \eqref{eq : A^d^2=K} also implies that
if $J'$ is the image of the map $\partial:A[t^K]\to A[t^K]$, then we have 
$J'\cap K=\{0\}$.\footnote{Indeed, An element $x$ in $J'$ can be uniquely written as 
in \eqref{eq : x=sum_ainK/Z (partial(f_a)+af_s)t^a}. 
If now $x$ belongs to $K$, by the uniqueness of the coefficients in 
\eqref{eq : x=sum_ainK/Z (partial(f_a)+af_s)t^a} only the coefficient 
corresponding to $a=0$ is non zero, and we must have 
$x=\partial(f)\in K$, for some $f\in A$. This implies $\partial^2(f)=0$, 
hence $f\in A^{\partial^2=0}=K$, which gives $f\in K$ and hence 
$0=\partial(f)=x$ as desired.}
\end{remark}

\if{\begin{remark}
We anticipate here that, although $A$ may have 
non trivial ideals stable under the action of $\partial$, 
these conditions will impose strong restrictions to the 
maps between \emph{regular singular} differential modules (cf. 
Theorem \ref{Thm. Equivalence}). 
\if{
In order to give an idea, condition (a) implies that 
for every pair $(m,n)$ of non 
negative integers and every $A$-linear 
morphism $\alpha:A^n\to A^m$ commuting with the 
action of 
$\partial$ componentwise on $A^n$ and $A^m$, the 
matrix of $\alpha$ has coefficients in $K$.
}\fi
\if{Indeed item (a) follows easily from 
the $K$-linearity of $\partial$ and the inclusion
$A^{\partial=0}\subseteq A^{\partial^2=0}=K$. Item 
(b) follows from the fact that if $x\in J\cap K$ and if 
$\partial(y)=x$ with $y\in A[t^K]$, then 
$\partial^2(y)=0$, which implies $y\in K$ and hence 
$x=0$.}\fi 
\end{remark}
}\fi

\section{The monodromy automorphism}
The exponential map over the complex numbers provides 
an exact sequence 
\begin{equation}\label{eq : exp map classical}
0\xrightarrow{\quad\qquad}\mathbb{Z}\xrightarrow{\quad\qquad}\mathbb{C}\xrightarrow[x\mapsto e^{2 i \pi x}]{}\mathbb{C}^\times\xrightarrow{\quad\qquad} 1
\end{equation}
which is extremely useful for numerous applications. 
We begin this section by proving that 
a similar exact sequence exists over any algebraically 
closed field of characteristic $0$. Notice that no 
continuity is required, the mere existence of it will be 
enough for the algebraic methods used in this paper. 
The proof uses the axiom of choice (cf. Hypothesis 
\ref{ZFC}).

\begin{lemma}\label{Lemma : basis same card of E}
Let $U$ be a $\mathbb{Q}$-vector space and $b$ a basis 
of $U$ over $\mathbb{Q}$. If the cardinality of $U$ is 
not countable, then $U$ and $b$ share the same 
cardinality. 
\end{lemma}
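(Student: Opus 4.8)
The plan is to show both inequalities $\Card(b) \leq \Card(U)$ and $\Card(U) \leq \Card(b)$, the first being trivial since $b \subseteq U$. For the reverse inequality, the key point is that every element of $U$ is a \emph{finite} $\mathbb{Q}$-linear combination of elements of $b$, so $U$ is in bijection with the set of finitely supported functions $b \to \mathbb{Q}$, equivalently with $\bigcup_{n \geq 0} (b^n \times \mathbb{Q}^n)$ (a point of $U$ is determined by a finite tuple of basis vectors together with a finite tuple of rational coefficients; the map is surjective, which suffices for an upper bound on cardinality).

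First I would set $\kappa = \Card(b)$ and note that, since $\Card(U)$ is uncountable and $U$ surjects onto... actually more carefully: $b$ must be infinite, because a finite basis would give $U$ countable (it would be isomorphic to $\mathbb{Q}^n$), contradicting the hypothesis. So $\kappa$ is an infinite cardinal. Then I would bound $\Card(U) \leq \sum_{n \geq 0} \Card(b^n) \cdot \Card(\mathbb{Q}^n) = \sum_{n\geq 0} \kappa^n \cdot \aleph_0$. Using the standard cardinal arithmetic facts (valid under ZFC, cf. Hypothesis \ref{ZFC}) that for an infinite cardinal $\kappa$ one has $\kappa^n = \kappa$ for each finite $n \geq 1$, that $\aleph_0 \leq \kappa$, and that a countable sum of copies of $\kappa$ is again $\kappa$ (i.e. $\aleph_0 \cdot \kappa = \kappa$), the right-hand side collapses to $\kappa$. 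Hence $\Card(U) \leq \kappa = \Card(b)$, and combined with $\Card(b) \leq \Card(U)$ and the Cantor--Schröder--Bernstein theorem we get equality.

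The only genuine subtlety — and the step I would be most careful about — is the invocation of these cardinal-arithmetic identities, which rely on the axiom of choice (every infinite cardinal is an aleph, well-orderable, and absorbs smaller or equal factors/summands); this is exactly why Hypothesis \ref{ZFC} was recorded. The uncountability hypothesis on $U$ is used only to guarantee that $b$ is infinite; once that is in hand the argument is pure cardinal bookkeeping. I would phrase the finite-combination observation as: the evaluation map $\coprod_{n \geq 0}\big(b^n \times \mathbb{Q}^n\big) \to U$ sending $\big((e_1,\dots,e_n),(q_1,\dots,q_n)\big) \mapsto \sum_{i=1}^n q_i e_i$ is surjective, so $\Card(U)$ is at most the cardinality of the source, and then run the computation above.
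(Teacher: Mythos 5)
Your proof is correct, and it follows the same two-inequality skeleton as the paper ($b\subset U$ gives one direction; expressing elements of $U$ in the basis gives the other), but the decisive counting step is done differently. The paper identifies $U$ with the finitely supported functions $\mathbb{Q}^{(b)}$, then enlarges to the full function space $\mathbb{Q}^{b}$ and computes $\aleph_0^{\aleph_\omega}=\aleph_\omega$; that cardinal identity is not valid (for infinite $\kappa$ one has $\aleph_0^{\kappa}=2^{\kappa}>\kappa$), so the enlargement to $\mathbb{Q}^b$ overshoots and the paper's bound does not actually close. You instead keep the finite-support structure, covering $U$ by the surjection from $\coprod_{n\geq 0}\bigl(b^{n}\times\mathbb{Q}^{n}\bigr)$, and use only the absorption laws $\kappa^{n}=\kappa$, $\aleph_0\cdot\kappa=\kappa$ for infinite $\kappa$ (legitimate under Hypothesis \ref{ZFC}); this yields the correct bound $\Card(U)\leq\Card(b)$. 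You also make explicit the one place the uncountability hypothesis enters — forcing $b$ to be infinite, without which the lemma fails (e.g.\ $U=\mathbb{Q}$) — a point the paper leaves implicit. So your route is the sharper one: same architecture, but the upper bound is obtained from the finitely supported model rather than the full product, which is exactly what the statement needs.
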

\begin{proof}
Let $\omega, \omega'$ be infinite ordinals such that
$\aleph_{\omega'}$ is the cardinality of $U$,  and 
$\aleph_{\omega}$ is that of $b$.
Since $b\subset U$, then $\aleph_{\omega}\leq 
\aleph_{\omega'}$. 
On the other hand, 
$U\cong\mathbb{Q}^{(b)}\subset \mathbb{Q}^b$ 
so the cardinality of $U$ is less than or equal to that of 
$\mathbb{Q}^b$ which is 
$\aleph_0^{\aleph_{\omega}}=\aleph_{\omega}$. The 
claim follows.
\end{proof}

\begin{proposition}
The additive 
group $K/\mathbb{Z}$ is isomorphic to the group $K^\times$ of invertible elements of $K$.
\end{proposition}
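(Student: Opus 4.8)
The plan is to show both groups are divisible abelian groups of the same cardinality with the same torsion subgroup, and then invoke the structure theory of divisible abelian groups. First I would recall that $K^\times$ is divisible: since $K$ is algebraically closed, every element has an $n$-th root for every $n\geq 1$, so $K^\times$ is a divisible abelian group. Its torsion subgroup is exactly $\mu_\infty$, the group of all roots of unity, and because $K$ is algebraically closed of characteristic $0$, for each $n$ the group of $n$-th roots of unity is cyclic of order $n$, so $\mu_\infty\cong\mathbb{Q}/\mathbb{Z}$. On the other side, $K/\mathbb{Z}$ is clearly divisible (being a quotient of the $\mathbb{Q}$-vector space... wait — $K$ as an additive group is a $\mathbb{Q}$-vector space, hence divisible, and a quotient of a divisible group is divisible), and its torsion subgroup is $(\mathbb{Q}\cap\widetilde{\ })/\mathbb{Z}$; more precisely the torsion of $K/\mathbb{Z}$ consists of the classes of elements $x\in K$ with $nx\in\mathbb{Z}$ for some $n$, i.e. $x\in\mathbb{Q}$, so the torsion subgroup of $K/\mathbb{Z}$ is $\mathbb{Q}/\mathbb{Z}$ as well.

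Next I would invoke the classification theorem for divisible abelian groups: every divisible abelian group is a direct sum of copies of $\mathbb{Q}$ and copies of the Prüfer groups $\mathbb{Z}(p^\infty)$, and the isomorphism type is determined by the torsion subgroup together with the $\mathbb{Q}$-dimension of the quotient by the torsion subgroup (equivalently, $\dim_{\mathbb{Q}}(G\otimes_{\mathbb{Z}}\mathbb{Q})$ for the torsion-free part). Concretely, writing $G_{\mathrm{tors}}$ for the torsion subgroup, $G\cong G_{\mathrm{tors}}\oplus\mathbb{Q}^{(\kappa)}$ where $\kappa$ is the $\mathbb{Q}$-dimension of $G/G_{\mathrm{tors}}$. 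We have just identified the torsion subgroups: both are $\mathbb{Q}/\mathbb{Z}$. So it remains to compare the two cardinals $\kappa$: for $K/\mathbb{Z}$ the torsion-free quotient is $K/\mathbb{Q}$ (a $\mathbb{Q}$-vector space), and for $K^\times$ it is $K^\times/\mu_\infty$. Since $K$ is an uncountable field (this should follow from earlier hypotheses — if $K$ were countable one can argue separately, but an algebraically closed field of characteristic $0$ on which these constructions are interesting is uncountable; in any case one can treat the countable case by hand, where $K/\mathbb{Z}\cong\mathbb{Q}/\mathbb{Z}\oplus\mathbb{Q}^{(\omega)}\cong K^\times$ directly), we use Lemma \ref{Lemma : basis same card of E}: a $\mathbb{Q}$-basis of $K$ has the same cardinality $|K|$ as $K$ itself, hence $\dim_{\mathbb{Q}}(K)=|K|$ and so $\dim_{\mathbb{Q}}(K/\mathbb{Q})=|K|$ as well. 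For $K^\times$, its cardinality is also $|K|$, and $K^\times/\mu_\infty$ is a $\mathbb{Q}$-vector space of cardinality $|K|$ (since $\mu_\infty$ is countable), so again by Lemma \ref{Lemma : basis same card of E} its $\mathbb{Q}$-dimension is $|K|$. Thus $\kappa=|K|$ in both cases.

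Finally, assembling the pieces: $K/\mathbb{Z}\cong\mathbb{Q}/\mathbb{Z}\oplus\mathbb{Q}^{(|K|)}\cong K^\times$, which is the claim. The main obstacle I anticipate is not any single deep step but rather handling the bookkeeping around cardinalities cleanly — in particular justifying that the torsion-free parts genuinely have $\mathbb{Q}$-dimension equal to $|K|$ (this is exactly what Lemma \ref{Lemma : basis same card of E} is for, modulo the countable edge case) and making sure the torsion subgroup identifications $\mu_\infty\cong\mathbb{Q}/\mathbb{Z}$ and $(K/\mathbb{Z})_{\mathrm{tors}}\cong\mathbb{Q}/\mathbb{Z}$ are stated precisely. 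One could alternatively avoid citing the full classification of divisible groups by choosing, via the axiom of choice, a $\mathbb{Q}$-basis of $K$ containing $1$ and a $\mathbb{Q}$-basis complement, and directly building the isomorphism through the exponential-type map $K\to K^\times$ suggested by \eqref{eq : exp map classical}; but the structure-theory route is cleaner to write.
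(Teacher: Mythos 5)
Your proof is correct and follows essentially the same route as the paper: both identify the torsion subgroups of $K/\mathbb{Z}$ and $K^\times$ with $\mathbb{Q}/\mathbb{Z}$, split them off, and then match the torsion-free parts $K/\mathbb{Q}$ and $K^\times/\mu_\infty$ as $\mathbb{Q}$-vector spaces of equal dimension via Lemma \ref{Lemma : basis same card of E}, treating the countable case separately. The only cosmetic difference is that you invoke the full classification of divisible abelian groups to obtain the decomposition, whereas the paper just uses that $\mathbb{Q}/\mathbb{Z}$ (resp.\ $\mu_\infty$) is divisible, hence injective, so the relevant exact sequences split.
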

\begin{proof}
The torsion part of $K/\mathbb{Z}$ is 
the subgroup $\mathbb{Q}/\mathbb{Z}$ and the exact 
sequence $0\to\mathbb{Q}/\mathbb{Z}\to 
K/\mathbb{Z}\to K/\mathbb{Q}\to 0$ splits because 
$\mathbb{Q}/\mathbb{Z}$ is a divisible group (hence an injective object in the category of abelian groups). 
Analogously, $K^\times = 
\mu_\infty\oplus(K^\times/\mu_\infty)$. The abelian 
groups $V:=K/\mathbb{Q}$ and  
$W:=K^\times/\mu_\infty$ are naturally 
$\mathbb{Q}$-vector spaces because they have no 
torsion and are divisible. Since  
$\mathbb{Q}/\mathbb{Z}$ is isomorphic to 
$\mu_\infty$ it is enough to find an isomorphism 
between $V$ and $W$. We actually prove that $V$ and 
$W$ are isomorphic as $\mathbb{Q}$-vector spaces 
because they have equipotent bases. 
If the cardinality of $K$ is countable, the cardinality of $V$ and $W$ 
is at most countable. 
Moreover, they are both infinite dimensional because 
$K$ contains the algebraic closure of $\mathbb{Q}$. 
It follows that any basis of $V$ and of $W$ is countable
and $V\cong W$. 
If the cardinality of $K$ is not countable, the claim 
follows from Lemma \ref{Lemma : basis same card of E}.
\end{proof}

From now on, we now fix a group isomorphism
\begin{eqnarray}\label{eq : isom gamma def}
\overline{\gamma}\;:\;
K/\mathbb{Z}\xrightarrow{\;\;\cong\;\;}K^\times\;.
\end{eqnarray}
If $\pi:K\to K/\mathbb{Z}$ denotes the projection, we set 
$\gamma:=\overline{\gamma}\circ\pi:K\to K^{\times}$. We have an exact sequence
\begin{equation}\label{eq : exp map general}
0\xrightarrow{\quad\qquad}\mathbb{Z}\xrightarrow{\quad\qquad}K\xrightarrow{\;\;\;\;\;\;\gamma\;\;\;\;\;\;}K^\times\xrightarrow{\quad\qquad} 1
\end{equation}
In particular, we obtain so a set-theoretic bijection 
(cf. \eqref{eq : widetilde K/Z})
\begin{equation}\label{eq : abuse gamma}
\gamma\;:\;\widetilde{K/\mathbb{Z}}\xrightarrow{\;\;\cong\;\;}K^\times\;.
\end{equation}
As mentioned at the start of this section, this isomorphism resembles the classical exponential map \eqref{eq : exp map classical},\footnote{Unlike \eqref{eq : exp map general}, no topology is assumed here.} which, in the context of linear differential equations over $\mathbb{C}$, defines the \emph{monodromy automorphism}. This automorphism acts on the solution space of a differential equation via analytic continuation around the singular point $t = 0$. 

For instance, for equations like $t \frac{d}{dt}(y(t)) = a \cdot y(t)$ with $a \in \mathbb{C}$ (or $(t \frac{d}{dt})^2(y(t)) = 0$), the monodromy maps $y(t) = t^a$ to $e^{2i\pi a} t^a$, or $\log(t)$ to $\log(t) + 2i\pi$, respectively. This automorphism commutes with the derivation and belongs to the classical \emph{differential Galois group} of the equation (see \cite[Proposition 10.1]{VS}). The same reference shows that the differential Galois group of the category of differential modules over $\mathbb{C}\f{t}$ that are \emph{regular singular at $t = 0$} is the algebraic envelope of $\mathbb{Z}$, with the monodromy as its generator.

In the classical setting, this result relies on the existence theorem for Picard-Vessiot rings, which does not extend to our context of differential rings. Nonetheless, we generalize this correspondence by proving that the algebraic envelope of $\mathbb{Z}$ serves as the Tannakian group of a certain category of \emph{regular singular} differential modules over a \emph{differential $K$-algebra without exponents or logarithms} (cf. Definition \ref{Def : reg sing diff mod}). 
\if{
\begin{corollary}
If $A=K[t,t^{-1}]$, then $E_A$ is isomorphic to 
the affine $K$-algebra of the algebraic envelop of 
$\mathbb{Z}$ over $K$.
\end{corollary}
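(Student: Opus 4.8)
The plan is to make both sides explicit and match them through the monodromy isomorphism $\gamma$ of \eqref{eq : isom gamma def}. First I would unwind $E_A$ for $A=K[t,t^{-1}]$: since $K[t,t^{-1}]$ is the group algebra of $\mathbb{Z}$, one has $A[t^K]=A\otimes_{K[t,t^{-1}]}K[t^K]=K[t^K]$, the group algebra of the additive group $(K,+)$, so $E_A=K[t^K][\ell]$ with the derivation of Lemma~\ref{Lemma : Existence of partial}. I would record two facts here: $E_A$ is an integral domain (a polynomial ring over the group algebra, over a field, of a torsion-free abelian group), and $K[t,t^{-1}]$ is trivially a differential $K$-algebra without exponents nor logarithm, so that by Proposition~\ref{Prop : kernel of partial} we have $E_A^{\partial=0}=K$ and the Picard--Vessiot formalism applies over $A$.

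Next I would recall the structure of $G:=\mathbb{Z}^{\mathrm{alg}}$, the algebraic envelope of $\mathbb{Z}$ over $K$. Being the pro-algebraic completion of an abelian group in characteristic $0$, it splits as $G\cong D\times\mathbb{G}_a$: here $\mathbb{G}_a$ is the pro-unipotent completion of $\mathbb{Z}$, with affine algebra $K[\ell]$ ($\ell$ primitive) — the ``$\log t$'' factor, i.e.\ unipotent monodromy — and $D$ is the pro-diagonalizable completion, with character group $X^*(D)=\mathrm{Hom}(\mathbb{Z},\mathbb{G}_m)=K^\times$, the group of monodromy eigenvalues. Thus $\mathcal{O}(G)\cong K\langle K^\times\rangle\otimes_K K[\ell]$ (equivalently, the $K$-algebra of exponential-polynomial functions $n\mapsto\sum_i p_i(n)\lambda_i^{\,n}$ on $\mathbb{Z}$).

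To match the two I would introduce the monodromy automorphism $\tau$ of $E_A$: the $K[t,t^{-1}]$-linear, $\partial$-commuting automorphism with $\tau(t^a)=\gamma(a)\,t^a$ and $\tau(\ell)=\ell+1$. It is well defined precisely because $\gamma$ restricts to the identity on $\mathbb{Z}$, so $\tau$ fixes $t=t^1$ and hence all of $A$, and $n\mapsto\tau^n$ has Zariski-dense image in $\mathrm{Aut}_\partial(E_A/A)$. Then I would compute the differential Galois group by the usual formula, using the method of Proposition~\ref{Prop : kernel of partial}: $\mathcal{O}(\mathrm{Aut}_\partial(E_A/A))=(E_A\otimes_A E_A)^{\partial}$ is freely generated over $K$ by the constants $t^a\otimes t^{-a}$ for $a\in\widetilde{K/\mathbb{Z}}$ — which, since $t^n\otimes t^{-n}=1$, form a group isomorphic to $(K/\mathbb{Z},+)$, hence via $\overline\gamma$ to $(K^\times,\cdot)$ — together with $\ell\otimes1-1\otimes\ell$, which supplies the $\mathbb{G}_a$-factor. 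Matching this data with $X^*(D)\cup\{\ell\}$ yields a Hopf algebra isomorphism $(E_A\otimes_A E_A)^{\partial}\cong K\langle K^\times\rangle\otimes_K K[\ell]\cong\mathcal{O}(G)$; combined with Theorem~\ref{Thm. Equivalence} applied to $A=K[t,t^{-1}]$, this identifies $G$ with the Tannakian group of the category of regular singular modules over $K[t,t^{-1}]$.

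The point that needs care — and, I expect, the main obstacle, which is conceptual rather than computational — is the precise sense of ``$E_A$ is isomorphic to the affine algebra of $G$''. What is literally true is that $\mathrm{Spec}(E_A)$ is a torsor under $G$ over $\mathrm{Spec}(A)=\mathbb{G}_m$, not the group scheme itself; in particular $E_A$ is a domain whereas $\mathcal{O}(G)$ is not, since it contains the idempotents coming from $\mu_\infty\subseteq K^\times$. So the isomorphism of the statement should be read either as $(E_A\otimes_A E_A)^{\partial}\cong\mathcal{O}(G)$, or equivalently after specializing $t\mapsto1$: the quotient $E_A/(t-1)E_A$ is canonically the group algebra $K\langle K/\mathbb{Z}\rangle[\ell]$, which via $\overline\gamma$ is $\mathcal{O}(G)$. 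In every version the only substantive ingredient is the isomorphism $\overline\gamma\colon K/\mathbb{Z}\xrightarrow{\;\sim\;}K^\times$ of \eqref{eq : isom gamma def}, playing the role of the exponential map that identifies exponents modulo $\mathbb{Z}$ with monodromy eigenvalues; everything else is bookkeeping with group algebras and with the derivation of Lemma~\ref{Lemma : Existence of partial}.
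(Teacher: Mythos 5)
Your mathematics is sound, and it is in fact more careful than the paper's own argument, which is a one-liner: the paper merely recalls that the affine algebra of the algebraic envelope $G$ of $\mathbb{Z}$ is the group algebra $K\lr{K^\times}\otimes_K K[\ell]$ and asserts that $\overline{\gamma}$ of \eqref{eq : isom gamma def} ``provides the required isomorphism'' with $E_{K[t,t^{-1}]}=K[t^K][\ell]$. Your objection to the literal statement is well taken and is precisely what that one-liner glosses over: $K[t^K]$ is the group algebra of the torsion-free group $(K,+)$, hence a domain, while $K\lr{K^\times}$ has idempotents coming from $\mu_\infty$, so no isomorphism of $K$-algebras (nor of $A$-algebras with $A\otimes_K K\lr{K^\times}[\ell]$) can exist; the $A$-basis $t^{\widetilde{K/\mathbb{Z}}}$ is not multiplicatively closed, so one cannot split a copy of $K\lr{K/\mathbb{Z}}$ off inside $E_A$, which is the hidden step the paper's appeal to $\overline{\gamma}$ would need. (Consistently with this, the corollary and its proof are commented out of the compiled paper.) Your corrected readings are the right ones, and your verifications go through: $\mathrm{Spec}\,E_A$ is a $G$-torsor over $\mathbb{G}_m$ rather than the group itself, the constants $(E_A\otimes_AE_A)^{\partial=0}$ are spanned by the elements $(t^a\otimes t^{-a})\cdot(\ell\otimes 1-1\otimes\ell)^k$ and form a copy of $K\lr{K/\mathbb{Z}}[\ell]\cong K\lr{K^\times}\otimes_KK[\ell]$ (the spanning argument being the same uniqueness-of-coefficients device as in Proposition \ref{Prop : kernel of partial}), and the fibre $E_A/(t-1)E_A\cong K\lr{K/\mathbb{Z}}[\ell]$ gives the same answer; in every version the only substantive input is $\overline{\gamma}$, exactly as in the paper, but your route additionally produces the differential automorphism group and ties it to Theorem \ref{Thm. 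Equivalence}, which the paper's version does not attempt. Two small wording points: $\gamma$ is trivial (not ``the identity'') on $\mathbb{Z}$, which is what makes your $\tau$ fix $A$ --- it is the paper's monodromy $\sigma$ --- and the appeal to ``the Picard--Vessiot formalism'' over $A=K[t,t^{-1}]$, which is not a field, should be read only as shorthand for the explicit computation of constants you sketch, since the paper itself stresses that Picard--Vessiot existence theorems are not available in this setting.
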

\begin{proof}
It is known that the affine algebra of the algebraic 
envelop of $\mathbb{Z}$ is given by the ring 
$K\lr{K^\times}\otimes_K K[\ell]$, where $K\lr{K^\times}$ 
denotes here the group algebra of the (abstract) 
commutative group $K^\times$ (cf. \eqref{Section : general notation}). 
The isomorphism $\overline{\gamma}$ provides then
the required isomorphism.
\end{proof}
The classical differential Galois theory deduces from this 
Corollary the fact that the category of differential 
modules over the scheme $\mathbb{G}_m$ that are 
trivialized by $E_{K[t,t^{-1}]}$ is equivalent to the 
category of $K$-linear representations of the algebraic 
envelop of $\mathbb{Z}$ (see for instance 
\cite[Proposition 10.1]{VS} together with the tool of 
Katz's canonical extension functor \cite{Katz-Can}). The 
same equivalence 
actually does not exists over any ring with derivation 
$(A,\partial)$ as showed by the conditions appearing in 
Proposition \ref{Prop : kernel of partial}.
In the following, we will obtain 
this equivalence over a conveniently general
coefficient ring $(A,\partial)$ 
(cf. Theorem \ref{Thm. Equivalence}).
}\fi

First of all, we use $\gamma$ to define an 
automorphism of $E_A$. 
Recall that $A$ is a general ring satisfying 
hypothesis \ref{Hyp : A is a diff ring}.
\begin{lemma}
There exists a unique ring automorphism $\sigma$ 
of $E_A$ satisfying:
\begin{enumerate}
\item $\sigma(f)=f$ for all $f\in A$;
\item $\sigma(t^a)=\gamma(a)t^a$ for all $a\in \widetilde{K/\mathbb{Z}}$;\footnote{With an abuse, we identify $K/\mathbb{Z}$ with $\widetilde{K/\mathbb{Z}}$.}
\item $\sigma(\ell)=\ell+1$.
\end{enumerate}
Moreover, 
\begin{enumerate}
\item $\sigma$ stabilizes $A[t^K]$ and it is automorphism of $A[t^K]$;
\item $\sigma:E_A\simto E_A$ commutes with 
$\partial:E_A\to E_A$.
\end{enumerate}
\end{lemma}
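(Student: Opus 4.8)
The plan is to construct $\sigma$ first on the $A$-module $A[t^K]$ by prescribing its values on the $A$-basis $t^{\widetilde{K/\mathbb{Z}}}$, then extend it to the polynomial ring $E_A = A[t^K][\ell]$ by sending $\ell$ to $\ell+1$, and finally check the two "Moreover" assertions. Concretely, since $A[t^K]$ is free over $A$ with basis $\{t^a\}_{a\in\widetilde{K/\mathbb{Z}}}$, the $A$-linear map $\sigma_0$ determined by $\sigma_0(t^a)=\gamma(a)t^a$ is well defined. The first thing I would verify is that $\sigma_0$ is a ring homomorphism: for this it suffices to check compatibility with multiplication on the generating symbols $t^a\cdot t^b = t^{a+b}$, which unwinds via the unique decomposition \eqref{uniquenedd K} of $t^{a+b}$ as $f_{c}(t)\,t^{c}$ with $c\in\widetilde{K/\mathbb{Z}}$ the representative of $a+b\bmod\mathbb{Z}$ and $f_c(t)\in K[t,t^{-1}]$ a monomial $t^n$ with $n\in\mathbb{Z}$. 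The key point is that $\gamma$ is a group homomorphism $K\to K^\times$ which is trivial on $\mathbb{Z}$: thus $\gamma(a)\gamma(b)=\gamma(a+b)=\gamma(c)$ (the integer shift $n$ being killed by $\gamma$), so $\sigma_0(t^a)\sigma_0(t^b)=\gamma(a)\gamma(b)t^{a+b}=\gamma(c)t^{a+b}=\sigma_0(t^at^b)$, noting that $\sigma_0$ fixes $K[t,t^{-1}]\subset A$ and hence fixes the monomial $f_c(t)=t^n$. That $\sigma_0$ is bijective is immediate since $\gamma(a)\in K^\times$: the inverse sends $t^a$ to $\gamma(a)^{-1}t^a$, and one checks it is again a ring map the same way.

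Next I would extend $\sigma_0$ to $E_A=A[t^K][\ell]$. Since $E_A$ is a polynomial ring in $\ell$ over $A[t^K]$, the universal property of polynomial rings gives a unique ring homomorphism $\sigma:E_A\to E_A$ extending $\sigma_0$ and sending $\ell\mapsto \ell+1$. Its inverse is the analogous extension of $\sigma_0^{-1}$ sending $\ell\mapsto\ell-1$; composing the two in either order fixes $A$, fixes each $t^a$, and sends $\ell$ to $\ell$, hence is the identity by uniqueness. Uniqueness of $\sigma$ with the three listed properties is clear, since $A$, the $t^a$, and $\ell$ generate $E_A$ as a ring. The claim that $\sigma$ stabilizes $A[t^K]$ is then obvious, as $\sigma|_{A[t^K]}=\sigma_0$ by construction.

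It remains to show $\sigma$ commutes with $\partial$. Both $\sigma\circ\partial$ and $\partial\circ\sigma$ are additive maps; to see they agree it is enough to check equality on a set of ring generators, using the Leibniz rule and the fact that $\sigma$ is multiplicative: if two derivations-twisted-by-$\sigma$ agree on generators and $\sigma$ is a ring map, they agree everywhere (formally, $D:=\sigma\circ\partial-\partial\circ\sigma$ satisfies $D(xy)=\sigma(x)D(y)+D(x)\sigma(y)$, so the set on which $D$ vanishes is a subring). On $f\in A$: $\sigma\partial(f)=\partial(f)=\partial\sigma(f)$ since $\sigma$ fixes $A$ and $\partial$ preserves $A$. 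On $t^a$ with $a\in\widetilde{K/\mathbb{Z}}$: if $a\neq 0$ then $\sigma\partial(t^a)=\sigma(at^a)=a\gamma(a)t^a$ and $\partial\sigma(t^a)=\partial(\gamma(a)t^a)=\gamma(a)\,at^a$, using that $\gamma(a)\in K^\times\subset A$ is $\partial$-constant (it lies in $K$, and $\partial$ vanishes on $K$ since $K\subset K[t,t^{-1}]$ consists of degree-zero Laurent polynomials); if $a=0$ then $t^0=1$ and both sides vanish. On $\ell$: $\sigma\partial(\ell)=\sigma(1)=1$ and $\partial\sigma(\ell)=\partial(\ell+1)=1$. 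This covers a generating set, so $D\equiv 0$ on $E_A$, completing the proof. I expect no serious obstacle here; the only point requiring a moment's care is the multiplicativity of $\sigma_0$, where one must correctly track the $\mathbb{Z}$-ambiguity in the chosen section $\widetilde{K/\mathbb{Z}}$ and use that $\gamma$ kills $\mathbb{Z}$.
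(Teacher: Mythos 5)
Your proof is correct and simply carries out in full the verification that the paper dismisses as ``straightforward'': defining $\sigma$ on the $A$-basis $t^{\widetilde{K/\mathbb{Z}}}$ of $A[t^K]$ (using that $\gamma$ kills $\mathbb{Z}$ for multiplicativity), extending via the universal property of $A[t^K][\ell]$, and checking commutation with $\partial$ on ring generators via the $\sigma$-twisted derivation $D=\sigma\partial-\partial\sigma$. This is the same natural approach the paper intends, with the details properly filled in.
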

\begin{proof}
The proof is straightforward.
\if{
It is easily seen that the uniqueness of the coefficients in 
the expressions \eqref{x written as At^K} and 
\eqref{eq: x written in E_A} shows that 
$\sigma$ is a well defined $K$-linear bijection of $E_A$. 
It is also clear that it commutes with the 
the restriction of $\sigma$ to 
$A[t^K]=K[t^K]\otimes_KA$ coincides with 
$\sigma\otimes \mathrm{id}_A$. 
}\fi
\end{proof}

\begin{definition}
We call $\sigma:E_A\simto E_A$ the \emph{monodromy 
automorphism}. We set
\begin{equation}
d_\sigma\;:=\;\sigma-\mathrm{id}\;.
\end{equation}
\end{definition}
The map $d_\sigma\;:\;E_A\to E_A$ is $A$-linear and, for every $x,y\in E_A$, it satisfies
\begin{equation}
d_\sigma(xy)\;=\;d_\sigma(x)\sigma(y)+xd_\sigma(y)\;=\;
d_\sigma(x)y+\sigma(x)d_\sigma(y)\;.
\end{equation}
The following Lemma provides a set of elements 
adapted to the action of $\sigma$.
\begin{lemma}\label{Lemma : binoms generates}
Let us set for all $n\in\mathbb{Z}$, $n\geq 0$
\begin{equation}
\binom{\ell}{n}\;=\;
\frac{\ell(\ell-1)(\ell-2)\cdots(\ell-n+1)}{n!} 
\;\;\in\;\; K[\ell] \;.
\end{equation}
with the usual convention $\tbinom{\ell}{0}=1$. We extend this definition to negatives $n$ by setting $\tbinom{\ell}{n}=0$ for all $n<0$.
Then for all $n\in\mathbb{Z}$ we have
\begin{equation}\label{dsigma on binomial}
d_\sigma(\binom{\ell}{n})\;=\;
\sigma(\binom{\ell}{n})-\binom{\ell}{n}\;=\;
\binom{\ell}{n-1}\;.
\end{equation}
Moreover, every element $x\in E_A$ can be uniquely 
written as 
\begin{eqnarray}
x\;=\;\sum_{k=0}^nx_k\cdot \binom{\ell}{k}\;,
\qquad x_k\in A[t^K]\;.
\end{eqnarray}
\end{lemma}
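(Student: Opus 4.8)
The plan is to prove the two assertions separately, as they are essentially independent once one has the right bookkeeping. For the identity \eqref{dsigma on binomial}, I would argue purely within $K[\ell]$: since $\sigma(\ell)=\ell+1$, the Pascal-type identity $\binom{\ell+1}{n}-\binom{\ell}{n}=\binom{\ell}{n-1}$ is a polynomial identity valid for the symbolic variable $\ell$ (it holds after evaluating at every nonnegative integer, and both sides are polynomials, so it holds identically; alternatively one expands the product directly). The cases $n=0$ and $n<0$ are handled by the stated conventions: for $n=0$, $d_\sigma(1)=0=\binom{\ell}{-1}$, and for $n<0$ both sides are $0$. Since $\sigma$ fixes $A$ and $A[t^K]$ pointwise in the relevant sense — actually $\sigma$ acts on $A[t^K]$ but commutes with multiplication — the identity on the polynomial part $K[\ell]$ is all that is needed, because $\sigma$ is a ring homomorphism.

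For the uniqueness-of-expansion statement, the key observation is that $\{\binom{\ell}{k}\}_{k\geq 0}$ is, for each fixed degree bound, a $K$-basis of the space of polynomials in $\ell$ of degree $\leq n$: indeed $\binom{\ell}{k}$ has degree exactly $k$ with leading coefficient $1/k!\neq 0$ (here $\mathrm{char}(K)=0$ is used), so the transition matrix from $\{1,\ell,\dots,\ell^n\}$ to $\{\binom{\ell}{0},\dots,\binom{\ell}{n}\}$ is upper triangular with nonzero diagonal, hence invertible over $K$. Therefore $\{\binom{\ell}{k}\}_{k\geq 0}$ is an $A[t^K]$-module basis of $E_A=A[t^K][\ell]=A[t^K]\otimes_K K[\ell]$, which gives both existence and uniqueness of the expression $x=\sum_{k=0}^n x_k\binom{\ell}{k}$ with $x_k\in A[t^K]$.

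I do not expect any genuine obstacle here; the statement is a routine change of basis combined with the classical binomial recursion. The only point requiring a word of care is the extension of \eqref{dsigma on binomial} to all $n\in\mathbb{Z}$ (so that the formula $d_\sigma(x)=\sum_k x_k\binom{\ell}{k-1}$ can later be written uniformly without worrying about boundary terms), which is why the convention $\binom{\ell}{n}=0$ for $n<0$ is introduced; with that convention in place the identity is immediate. If anything is ``hard'', it is purely notational: keeping the distinction between the symbolic $\ell$ and its evaluations straight when invoking the polynomial identity $\binom{\ell+1}{n}-\binom{\ell}{n}=\binom{\ell}{n-1}$.
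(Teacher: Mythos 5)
Your proposal is correct and follows essentially the same route as the paper: the paper simply cites the classical fact that $\{\tbinom{\ell}{k}\}_{k\geq 0}$ is a $K$-basis of $K[\ell]$ satisfying the recursion \eqref{dsigma on binomial} (referring to Robert's book) and then invokes $E_A=A[t^K]\otimes_K K[\ell]$, exactly as you do. Your write-up merely supplies the standard details (Pascal's identity and the triangular change of basis in characteristic $0$) that the paper leaves to the reference, so there is nothing to correct.
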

\begin{proof}
It is classically known that 
$\{\tbinom{\ell}{k}\}_{k\geq 0}$ is a basis of the 
$K$-vector space $K[\ell]$ and verifies 
\eqref{dsigma on binomial} (cf. for instance 
\cite[Chapter 4, Section 1.1, p.162]{Robert}). The claim 
then follows from the fact that 
$E_A=A[t^K]\otimes_KK[\ell]$.
\end{proof}
\begin{proposition}\label{Prop : EAdsigma=0 A}
The following properties hold
\begin{enumerate}
\item\label{Prop : EAdsigma=0 A-3} $d_\sigma$ is surjective on $A[\ell]$ 
and $K[\ell]$ with kernels $A[\ell]^{d_\sigma=0}=A$ and
$K[\ell]^{d_\sigma=0}=K$ respectively.
\item\label{Prop : EAdsigma=0 A-1} We have $A[t^K]^{d_\sigma=0}=A$. Moreover, if $I$ denotes the image of the map 
$d_\sigma:A[t^K]\simto A[t^K]$, then
\begin{equation}\label{eq : image of d_sigma AT^K}
I\;=\;\{\sum_{a\in\widetilde{K/\mathbb{Z}}}f_at^a\;,\;\textrm{such that }f_0=0\}\;,
\end{equation}
 in particular 
$I\cap A=\{0\}$;
\item\label{Prop : EAdsigma=0 A-2} We have $E_A^{d_\sigma=0}=A$ and $d_\sigma$ is surjective on $E_A$.
\end{enumerate}
\end{proposition}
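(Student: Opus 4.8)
The plan is to prove the three items in the order they are stated; the first two are obtained by a direct computation, and the third combines them. Two facts will be used repeatedly: $d_\sigma$ is $A$-linear (since $\sigma$ fixes $A$), and $d_\sigma\bigl(\binom{\ell}{n}\bigr)=\binom{\ell}{n-1}$, which is \eqref{dsigma on binomial}. For item \ref{Prop : EAdsigma=0 A-3}, note that on the subrings $A[\ell]$ and $K[\ell]$ the automorphism $\sigma$ is simply the shift $\ell\mapsto\ell+1$ fixing the coefficients; writing an element as $x=\sum_{k\ge 0}x_k\binom{\ell}{k}$ with $x_k$ in $A$ (resp.\ $K$), $A$-linearity together with \eqref{dsigma on binomial} give $d_\sigma(x)=\sum_{k\ge 0}x_{k+1}\binom{\ell}{k}$. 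By uniqueness of this expansion (Lemma \ref{Lemma : binoms generates}) the kernel is $\{x:x_k=0\text{ for all }k\ge1\}$, that is $A$ (resp.\ $K$); and any $y=\sum_k y_k\binom{\ell}{k}$ equals $d_\sigma\bigl(\sum_k y_k\binom{\ell}{k+1}\bigr)$, giving surjectivity.

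For item \ref{Prop : EAdsigma=0 A-1}, write $x\in A[t^K]$ uniquely as $x=\sum_{a\in\widetilde{K/\mathbb{Z}}}f_a t^a$; since $\sigma(t^a)=\gamma(a)t^a$ and $\sigma$ fixes $A$, one gets $d_\sigma(x)=\sum_a(\gamma(a)-1)f_a t^a$. The key point is that $\gamma(0)=1$ while $\gamma(a)\ne1$ for every $a\ne0$ in $\widetilde{K/\mathbb{Z}}$: indeed $\overline{\gamma}$ is an injective group homomorphism sending $\overline0$ to $1$, and $0$ is the chosen lift of $\overline0$. Hence for $a\ne0$ the scalar $\gamma(a)-1$ lies in $K^\times$, and multiplication by it is a bijection of the $K$-algebra $A$. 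By uniqueness of coefficients, $d_\sigma(x)=0$ forces $f_a=0$ for all $a\ne0$, so the kernel is $\{f_0\}=A$; and since $(\gamma(a)-1)f_a$ ranges over all of $A$ as $f_a$ does, the image $I$ is exactly $\{\sum_a g_a t^a:g_0=0\}$, whence $I\cap A=\{0\}$.

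For item \ref{Prop : EAdsigma=0 A-2}, split $A[t^K]=A\oplus B$ with $B:=\bigoplus_{a\ne0}At^a$; this decomposition is $\sigma$-stable, hence so is $E_A=A[\ell]\oplus B[\ell]$ with $B[\ell]:=B\otimes_K K[\ell]$, and $d_\sigma$ preserves each summand. On $A[\ell]$, item \ref{Prop : EAdsigma=0 A-3} already gives surjectivity with kernel $A$. On $B[\ell]=\bigoplus_{a\ne0}M_a$ with $M_a:=At^a\otimes_K K[\ell]$, each $M_a$ is $\sigma$-stable, and in its $A$-basis $\{t^a\binom{\ell}{k}\}_{k\ge0}$ the twisted Leibniz rule for $d_\sigma$ combined with \eqref{dsigma on binomial} yields
\[
d_\sigma\Bigl(t^a\binom{\ell}{k}\Bigr)=(\gamma(a)-1)\,t^a\binom{\ell}{k}+\gamma(a)\,t^a\binom{\ell}{k-1}.
\]
Thus $d_\sigma|_{M_a}=(\gamma(a)-1)\cdot\mathrm{id}+N_a$, where $N_a$ sends the span of $\{t^a\binom{\ell}{j}\}_{j\le k}$ into that of $\{t^a\binom{\ell}{j}\}_{j\le k-1}$ and is therefore locally nilpotent; since $\gamma(a)-1\in K^\times$, the operator $d_\sigma|_{M_a}$ is invertible, its inverse $(\gamma(a)-1)^{-1}\sum_{j\ge0}\bigl(-(\gamma(a)-1)^{-1}N_a\bigr)^j$ being a finite sum on each subspace of bounded $\ell$-degree. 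Hence $d_\sigma$ is bijective on $B[\ell]$, and adding the two summands we conclude $E_A^{d_\sigma=0}=A\oplus0=A$ and that $d_\sigma$ is surjective on $E_A$.

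The main obstacle is the surjectivity in item \ref{Prop : EAdsigma=0 A-2}. A naive induction on the coefficients $x_k$ in the expansion $x=\sum x_k\binom{\ell}{k}$ leads to equations of the form $d_\sigma(x_k)=(\text{a prescribed element of }A[t^K])$, which by item \ref{Prop : EAdsigma=0 A-1} is solvable only when the prescribed element has vanishing $t^0$-component, and arranging all these compatibility conditions at once is delicate. Isolating the $\sigma$-stable piece $A[\ell]$, where $d_\sigma$ is surjective but not injective, from $B[\ell]$, where $d_\sigma$ is an isomorphism precisely because its ``semisimple part'' $\gamma(a)-1$ is already a unit, is the manoeuvre that makes everything go through; the only genuinely non-formal check is the local nilpotence of $N_a$ recorded above.
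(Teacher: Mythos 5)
Your items \eqref{Prop : EAdsigma=0 A-3} and \eqref{Prop : EAdsigma=0 A-1} are correct and coincide with the paper's computations, but your treatment of item \eqref{Prop : EAdsigma=0 A-2} takes a genuinely different and valid route. The paper works in the single expansion $x=\sum_k x_k\binom{\ell}{k}$ with $x_k\in A[t^K]$ and proves surjectivity by solving the resulting triangular system $d_\sigma(x_k)=y_k-\sigma(x_{k+1})$ recursively from the top, at each stage exploiting the freedom to add an element of $A$ to $x_{k+1}$ so that the right-hand side of the next equation has vanishing $t^0$-component and hence lies in the image described in \eqref{eq : image of d_sigma AT^K}; this is exactly the delicate bookkeeping you flag as the obstacle. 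You instead split $E_A=A[\ell]\oplus B[\ell]$ along the $\sigma$-stable decomposition $A[t^K]=A\oplus\bigl(\bigoplus_{a\neq 0}At^a\bigr)$, reduce surjectivity on $A[\ell]$ to the shift formula \eqref{dsigma on binomial}, and observe that on each summand $At^a\otimes_KK[\ell]$ with $a\neq 0$ the operator $d_\sigma=(\gamma(a)-1)\,\mathrm{id}+N_a$ is invertible because $\gamma(a)-1\in K^\times$ (here you correctly use that $\gamma$ is injective on $\widetilde{K/\mathbb{Z}}$ and that $0$ is the chosen lift) and $N_a$ strictly lowers the $\ell$-degree, hence is locally nilpotent. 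This buys you a cleaner argument with no compatibility conditions to arrange, plus the slightly stronger conclusion that $d_\sigma$ is bijective on the complement $B[\ell]$, so every preimage is unique up to the kernel $A$ sitting inside $A[\ell]$; the paper's argument, by contrast, stays entirely within one basis expansion and makes the solving procedure explicit, which is closer in spirit to how the analogous statement for $\partial$ (Proposition \ref{Prop : Ktt-1 partial epi}) is meant to be checked. Both proofs are complete; your computation of $d_\sigma\bigl(t^a\binom{\ell}{k}\bigr)$ and the local-nilpotence check are the only non-formal points, and they are correct.
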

\begin{proof}
Item \eqref{Prop : EAdsigma=0 A-3} is straightforward from Lemma 
\ref{Lemma : binoms generates} and the fact that $d_\sigma$ acts trivially 
on $A$.

Let us prove item \eqref{Prop : EAdsigma=0 A-1}.
Let $x=\sum_{a\in \widetilde{K/\mathbb{Z}}}f_a 
\cdot t^a\in A[t^K]$, with $f_a\in A$. Then
\begin{equation}
d_\sigma(x)\;=\;\sum_{a\in\widetilde{K/\mathbb{Z}}}
(\gamma(a)-1)\cdot f_a\cdot t^a\;.
\end{equation}
Item \eqref{Prop : EAdsigma=0 A-1} follows from \eqref{eq : abuse gamma} and 
the uniqueness of the coefficients in this expression. 

Let us prove item \eqref{Prop : EAdsigma=0 A-2}. 
Let $x\in E_A$. 
By Lemma \ref{Lemma : binoms generates}, we can 
write $x=\sum_{k=0}^nx_k\cdot \tbinom{\ell}{k}\in 
E_A$, with $x_k\in A[t^K]$. Then
\begin{equation}\label{eq : dsigma(x)=}
d_\sigma(x)\;=\;
\Bigl(\sum_{k=0}^{n-1}(d_\sigma(x_k)+\sigma(x_{k+1}))\binom{\ell}{k}\Bigr)+d_\sigma(x_n)\binom{\ell}{n}
\end{equation}
By uniqueness of the coefficients in this 
expression, if $d_\sigma(x)=0$ then we must 
have $d_\sigma(x_n)=0$ and 
$d_\sigma(x_k)=-\sigma(x_{k+1})$ for all $k=0,\ldots,n-1$. By the previous part of 
the proof $d_\sigma(x_n)=0$ implies $x_n\in A$. This 
gives then $d_\sigma(x_{n-1})\in A$, which implies 
$d_\sigma(x_{n-1})=0$ by item \eqref{Prop : EAdsigma=0 A-1}, and hence $x_{n-1}\in A$. An induction then shows that 
$x_0,\ldots,x_n \in A$. The relation 
\eqref{eq : dsigma(x)=} becomes then 
$d_\sigma(x)=\sum_{k=0}^{n-1}\sigma(x_{k+1}) 
\binom{\ell}{k}=0$, which implies 
$x_1=x_2=\cdots=x_n=0$. Therefore $x=x_0\in A$. 
Let us prove the surjectivity of $d_\sigma$ on $E_A$. 
For every $y=\sum_{k=0}^{n-1}y_k\tbinom{\ell}{k}\in E_A$, 
we claim that there is $x=\sum_{k=0}^{n}x_k\in E_A$ such that 
$d_\sigma(x)=y$. By \eqref{eq : dsigma(x)=}, we obtain 
the following conditions on the $x_k$
\begin{equation}\label{eq : second base change}
\left\{\sm{
d_\sigma(x_{n})&=&0&&\\
d_\sigma(x_{n-1})&=&y_{n-1}&-&\sigma(x_{n})\\
d_\sigma(x_{n-2})&=&y_{n-2}&-&\sigma(x_{n-1})\\
&&&&\\
\cdots&&&&\\
d_\sigma(x_{0})&=&y_0&-&\sigma(x_{1})\;.
}
\right.
\end{equation}
For every $k=0,\ldots,n-1$ write 
$x_k=\sum_{a\in\widetilde{K/\mathbb{Z}}}g_{k,a}t^a$ and
$y_k=\sum_{a\in\widetilde{K/\mathbb{Z}}}f_{k,a}t^a$ as in 
\eqref{x written as At^K}. Let us solve the system. 
Since $A[t^K]^{d_\sigma=0}=A$, the first condition means 
$x_n\in A$, i.e. $g_{n,k}=0$ for all $k\neq 0$. 
By \eqref{eq : image of d_sigma AT^K}, we can solve 
$d_\sigma(x_{n-1})=y_{n-1}-f_{n-1,0}$, hence we can set 
$x_n=g_{n,0}:=f_{n-1,0}$ to solve the second equation. 
Moreover, since 
$A[t^K]^{d_\sigma=0}=A$, 
$x_{n-1}$ is unique up to addition of an element in $A$. 
In other words, we can freely choose $g_{n-1,0}$ in $A$. 
In particular, by \eqref{eq : image of d_sigma AT^K},  
we can choose $g_{n-1,0}$ in order that $y_{n-2}-\sigma(x_{n-1})$ 
belongs to the image of $d_\sigma$. 
Hence we can solve the second equation as well.
We see that this process can be iterated at every step and the claim 
follows. 
\end{proof}

The derivation $\partial$ does not satisfies similar rules for 
general $A$. For instance, the surjectivity of $\partial$ on $E_A$ implies 
that the category of Regular singular modules is closed by Yoneda 
extensions (cf. Proposition \ref{Prop : Reg stable by extensions}). 
\begin{proposition}\label{Prop : Ktt-1 partial epi}
The derivation $\partial$ is surjective as endomorphism of $K[\ell]$, 
$K[t,t^{-1}][\ell]$, and $E_{K[t,t^{-1}]}$. Moreover, as endomorphism of 
$K[t,t^{-1}][t^K]$, its image $J'$ is the group of of elements as in \eqref{x 
written as At^K}, where $f_0\in K[t,t^{-1}]$ has zero constant term. 
In particular, it satisfies $J'\oplus K=K[t,t^{-1}][t^K]$. For 
all the above rings the kernel of $\partial$ is $K$.
\end{proposition}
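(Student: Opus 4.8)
The plan is to take advantage of the fact that, for the specific rings in the statement, the derivation $\partial$ acts in a transparent ``block-diagonal'' way — much simpler than for a general differential ring $A$ — so that all four assertions reduce to one elementary computation on $K[\ell]$. First I would record the $K$-vector space decompositions $K[\ell]=\bigoplus_{k\ge 0}K\ell^{k}$, $K[t,t^{-1}][\ell]=\bigoplus_{i\in\mathbb{Z}}t^{i}K[\ell]$, $K[t^{K}]=\bigoplus_{c\in K}Kt^{c}$ (note $K[t,t^{-1}][t^{K}]=K[t^{K}]$ since here $A=K[t,t^{-1}]$), and $E_{K[t,t^{-1}]}=K[t^{K}][\ell]=\bigoplus_{c\in K}t^{c}K[\ell]$. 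From $\partial|_{K}=0$, $\partial(t^{c})=ct^{c}$ and $\partial(\ell)=1$ together with the Leibniz rule one computes $\partial(t^{c}\ell^{k})=c\,t^{c}\ell^{k}+k\,t^{c}\ell^{k-1}$; hence $\partial$ stabilizes each summand $t^{c}K[\ell]$ and, under $t^{c}K[\ell]\cong K[\ell]$, acts there as $c\cdot\mathrm{id}+\tfrac{d}{d\ell}$ (and as multiplication by $c$ on $Kt^{c}\subset K[t^{K}]$, where $K[\ell]$ is absent).

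Next I would establish the two basic facts about $K[\ell]$: (i) $\tfrac{d}{d\ell}\colon K[\ell]\to K[\ell]$ is surjective with kernel $K$, since $\tfrac{d}{d\ell}\bigl(\tfrac{\ell^{n+1}}{n+1}\bigr)=\ell^{n}$ in characteristic zero and a polynomial with vanishing derivative is constant; (ii) for $c\ne 0$ the operator $c\cdot\mathrm{id}+\tfrac{d}{d\ell}=c\bigl(\mathrm{id}+c^{-1}\tfrac{d}{d\ell}\bigr)$ is bijective on $K[\ell]$, because $\tfrac{d}{d\ell}$ is locally nilpotent, so $\mathrm{id}+c^{-1}\tfrac{d}{d\ell}$ has inverse $\sum_{j\ge 0}(-c^{-1})^{j}\bigl(\tfrac{d}{d\ell}\bigr)^{j}$, a sum that is finite when applied to any fixed polynomial. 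Assembling over the direct sums above: for $K[\ell]$, for $K[t,t^{-1}][\ell]$ and for $E_{K[t,t^{-1}]}$ exactly one block — $K[\ell]$ itself, respectively $i=0$, respectively $c=0$ — contributes by (i) a copy of $K$ to the kernel while staying surjective, and every other block is bijective by (ii); therefore $\partial$ is surjective on each of these three rings with kernel exactly $K$.

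Finally, on $K[t^{K}]=\bigoplus_{c\in K}Kt^{c}$ the derivation is multiplication by $c$ on $Kt^{c}$, so its kernel is $Kt^{0}=K$ and its image is $J'=\bigoplus_{c\ne 0}Kt^{c}$. Translating this through the presentation \eqref{x written as At^K}: writing $x=\sum_{a\in\widetilde{K/\mathbb{Z}}}f_{a}t^{a}$ with $f_{a}=\sum_{i\in\mathbb{Z}}c_{i,a}t^{i}\in K[t,t^{-1}]$, and recalling that $0\in\widetilde{K/\mathbb{Z}}$ is the chosen lift of $0$, the coefficient of $t^{0}$ in $x$ is $c_{0,0}$, i.e.\ the constant term of $f_{0}$; hence $J'$ is exactly the set of $x$ for which $f_{0}$ has zero constant term, and since $K=Kt^{0}$ is a vector-space complement of $J'$ one gets $J'\oplus K=K[t^{K}]$. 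I do not expect a genuine obstacle in this argument; the points needing care are merely checking that $\partial$ really is block-diagonal for these particular rings (it fails for a general $A$, as Propositions \ref{Prop : kernel of partial} and \ref{Prop : EAdsigma=0 A} already hint), the local-nilpotence step (ii), and the bookkeeping identifying the $t^{0}$-coefficient with the constant term of $f_{0}$.
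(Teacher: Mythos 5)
Your argument is correct, and it is organized differently from the paper's. The paper disposes of this proposition in one line, by saying the proof is ``similar to that of Proposition~\ref{Prop : EAdsigma=0 A}'': that is, one expands elements in the basis $t^{\widetilde{K/\mathbb{Z}}}$ over $A=K[t,t^{-1}]$ and in powers (or binomials) of $\ell$, observes that $\partial$ acts triangularly in the $\ell$-degree, and then solves the resulting triangular system by a downward induction, at each step using the description of the image of $\partial$ on the coefficient ring and the freedom of adding kernel elements to adjust constant terms. You instead exploit the fact that for $A=K[t,t^{-1}]$ the rings in question carry a full $K$-grading by exponents, $E_{K[t,t^{-1}]}=\bigoplus_{c\in K}t^{c}K[\ell]$, which is $\partial$-stable block by block, with $\partial$ acting on the block $t^{c}K[\ell]$ as $c\cdot\mathrm{id}+\tfrac{d}{d\ell}$; the key extra ingredient is that for $c\neq 0$ this operator is \emph{bijective}, inverted by the finite Neumann series $c^{-1}\sum_{j\geq 0}(-c^{-1}\tfrac{d}{d\ell})^{j}$ thanks to local nilpotence of $\tfrac{d}{d\ell}$, while the $c=0$ block is the elementary $\tfrac{d}{d\ell}$ on $K[\ell]$. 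This buys you a cleaner global statement (bijectivity off the zero block, hence kernel, image and surjectivity all at once, with no step-by-step choices), and your bookkeeping identifying the $t^{0}$-coefficient of $x=\sum_{a}f_{a}t^{a}$ with the constant term of $f_{0}$ correctly yields the description of $J'$ and the decomposition $J'\oplus K=K[t,t^{-1}][t^K]$. The paper's inductive scheme, on the other hand, is the one that transports verbatim to the operator $d_\sigma$ and to situations where only the $\widetilde{K/\mathbb{Z}}$-decomposition over a general $A$ is available, whereas your finer $K$-grading genuinely uses $A=K[t,t^{-1}]$ (as you yourself note, this block-diagonal structure fails for general $A$).
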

\begin{proof}
The proof is similar to that of Proposition \ref{Prop : EAdsigma=0 A}.
\end{proof}

\section{Differential modules}\label{Section : Diff mod}

A \emph{differential module} over a differential ring $(A,\partial)$ is an 
$A$-module $M$ together with a $\mathbb{Z}$-linear map
\begin{eqnarray}
\nabla\;:\;M\to M,
\end{eqnarray}
called \emph{connection},
satisfying $\nabla(fm)=\partial(f)m+f\nabla(m)$, 
for all $f\in A$ and all $m\in M$. It is usual to forget 
$\nabla$ in the notations and denote by the same symbol 
$\nabla$ all the connections, if no confusion is possible. 
A morphism $\alpha:M\to N$ of differential modules is 
an $A$-linear morphism commuting with the 
connections of $M$ and $N$ respectively. A morphism is 
also called sometimes  \emph{horizontal morphism} in order to distinguish it from $A$-linear morphisms.
The set of morphisms of differential modules is denoted 
by $\Hom_A^\nabla(M,N)$. It is naturally a 
$A^{\partial=0}$-module. Let us denote 
by $\partial-\Mod(A)$ the category of differential 
modules over $A$. 
The \emph{kernel} and 
\emph{cokernel} of a morphism of differential modules 
are naturally differential modules. More generally 
all the constructions of 
linear algebra exist for differential modules and the  
category $\partial-\Mod(A)$ is abelian. 
We quickly list here the notions that we use in this paper. \if{
The reader may find a 
more complete description in 
\cite[Section 5.3]{Kedlaya-Book}.
}\fi
\begin{itemize}
\item A sequence of differential 
modules is \emph{exact} if the underling sequence of $A$-modules is.

\item The \emph{direct sum} 
of $(M,\nabla)$ and $(N,\nabla)$ is the 
$A$-module $M\oplus N$ together with the connection 
$\nabla(m,n):=(\nabla(m),\nabla(n))$, $m\in M$, $n\in N$.
\item The \emph{tensor product} $M\otimes N$ 
of differential modules is the $A$-module 
$M\otimes_A N$ endowed with
the connection $\nabla(m\otimes n)=\nabla(m)\otimes n+m\otimes\nabla(n)$, for $m\in M$ and $n\in N$.

\item The elements of $M$ that are 
in the kernel of $\nabla$ are called 
\emph{solution of $M$ with value in $A$}. More 
generally, if $(B,\partial')$ is another differential ring and 
if a ring morphism $A\to B$ commuting with the 
derivations is given, then the tensor product 
$M\otimes_AB$ (with connection 
$\nabla\otimes1+1\otimes\partial'$) is naturally a 
differential module over $B$. We say that \emph{the 
solutions of $M$ with value in $B$ are the elements in 
the kernel of the connection of $M\otimes_A B$.}
This defines a functor 
\begin{equation}
\Sol_B\;:\;\partial-\Mod(A)\to\Mod(B^{\partial=0})
\end{equation}
associating to every differential module $M$ over $A$ the 
$B^{\partial=0}$-module of the solutions of $M$ with 
values in $B$. Solutions $x\in M\otimes_AB$ correspond bijectively to horizontal morphisms $B\to M\otimes_AB$. Therefore we have 
an identity of functors 
\begin{equation}
\Sol_B(M)\;=\;\Hom_B^\nabla(B,M\otimes_A B)\;.
\end{equation}
It is an additive functor and if $B$ is flat over $A$ it is 
also left exact. It is also left 
exact on exact sequences formed 
by differential modules that are flat as $A$-modules.

\item The differential module $(A,\partial)$ 
is called \emph{the trivial object} or the \emph{unit 
object}, we denote it by $A$ or by $\textbf{1}$. More 
generally we say that an object is 
\emph{trivial} if it is isomorphic to a direct sum of 
copies of $(A,\partial)$.

If $A\to B$ is a morphism of differential rings as above, 
we say that a differential module $M$ over $A$ is 
\emph{trivialized by $B$} if $M\otimes_A B$ is trivial as 
a differential module over $B$.

For every \emph{trivial} object $M$ over $B$ we have a natural isomorphism of differential modules over $B$
\begin{equation}\label{eq : trivial downup}
M^{\nabla=0}\otimes_{B^{\partial=0}}B\;\xrightarrow{\;\sim\;}\;M\;,
\end{equation}
where the connection on the left side module is $1\otimes\partial$.

\item The $A$-module of \emph{all} $A$-linear maps 
$\Hom_A(M,N)$ has a structure of differential module 
equipped with the connection $\nabla(f)=f\circ \nabla - 
\nabla\circ f$. Therefore, the horizontal morphisms $\Hom_A^\nabla(M,N)$ form a 
$A^{\partial=0}$-module which is nothing but the 
solutions with values in 
$A$ of the differential module $\Hom_A(M,N)$.
\item The \emph{dual module} $M^*$ is the differential 
module $\Hom_A(M,A)$.
\item We denote by $\H^1(M)$ the cokernel of $\nabla$. 
\item 
We denote by $\Ext(M, N)$ the group of Yoneda extensions. Recall that the elements of $\Ext(M, N)$ are equivalence classes of exact sequences of the form 
$0 \to N \to P \to M \to 0$
where two sequences 
$
0 \to N \to P \to M \to 0 $ and $0 \to N \to P' \to M \to 0
$
are considered equivalent if there exists an isomorphism $P \cong P'$ that induces the identity maps on $M$ and $N$. The group $\Ext(M, N)$ can be equipped with the Baer sum, which endows it with a group structure in the usual way (cf. \cite[Section 3.4]{Weibel-book}).
\item If $M$ is a finite projective $A$-module, 
we have an isomorphism of groups 
(cf. \cite[Lemma 5.3.3 and Remark 
5.3.4]{Kedlaya-Book}) 
\begin{eqnarray}\label{eq : yoneda ext}
\H^1(M^*\otimes N)=\Ext(M,N)\;.
\end{eqnarray} 
\end{itemize}
\subsection{Matrix of the connection}
Assume that $M$ be a differential module which is finite 
free as $A$-module with basis $b=(b_1,\ldots,b_n)$.  
For all $i=1,\ldots,n$ we can express $\nabla(b_i)$ in a unique way as 
$\nabla(b_i)=\sum_{j=1}^ng_{j,i}\cdot b_j$, with $g_{j,i}\in A$. 
We call the square matrix 
$G=(g_{i,j})_{i,j=1,\ldots,n}\in M_{n\times n}(A)$ the matrix 
of the connection of $M$. If $A^n\simto M$ is the 
isomorphism sending the canonical basis of $A^n$ into 
the basis $b$ we may identify the vectors
$m=\sum_{i=1}^n f_ib_i$ of $M$ with 
column $n$-uples $\vec{f}:=\phantom{}^t(f_1,\ldots,f_n)\in A^n$ and the 
function $\nabla$ is given by
\begin{equation}\label{eq : matrix of conn}
\nabla\left(\sm{f_1\\\vdots\\\\f_n}\right)\;=\;\left(\sm{\partial(f_1)\\\vdots\\\\\partial(f_n)}\right)+G\cdot
\left(\sm{f_1\\\vdots\\\\f_n}\right)\;.
\end{equation}
The matrix $G$ determines then entirely the map 
$\nabla$. Reciprocally every square matrix with 
coefficients in $A$ defines a connection 
by the rule \eqref{eq : matrix of conn}. 

Let us now consider a base change in $M$ where the 
vectors in the new basis are written as $H\vec{f}$, with 
$H\in GL_n(A)$ then the matrix of the connection in the 
new basis is given by
\begin{eqnarray}\label{eq : base change rule}
\partial(H)H^{-1}+HGH^{-1}\;,
\end{eqnarray}
where $\partial(H)$ means the matrix obtained from $H$ 
by differentiating every coefficient. \\

Let $0\to M_1\to M_2\to M_3\to 0$ be an exact 
sequence of differential modules over $A$ that are 
finite free as $A$-modules. Let $b_1\subset M_1$ and 
$b_3\subset M_3$ be two basis and let $G_1$ and 
$G_3$ be the corresponding matrices of the connections 
$\nabla_1$ and $\nabla_3$ respectively. 
Let now $\widetilde{b}_3\subseteq M_2$ be a set of 
vectors which are in bijection with $b_3\subset M_3$ 
under the projection $M_2\to M_3$. Then the set 
$b_2:=b_1\cup\widetilde{b}_3$ is easily seen to be a 
basis of the $A$ module $M_2$ an the matrix $G_2$ of 
$\nabla_2$ in this basis is a block matrix of  the form
\begin{eqnarray}\label{eq: exact sequence}
G_2\;=\;\left(\sm{G_1&&*\\\\0&&G_3}\right)\;.
\end{eqnarray}
Reciprocally, if the matrix of a finite free differential 
module has this form, then the module fits in the middle 
of an exact sequence of differential modules as above.

\begin{definition}\label{Definition : N(a)}
Assume now that $A$ is a $K$-algebra with  a $K$-linear derivation 
$\partial:A\to A$. For all $a\in K$ we denote by 
\begin{eqnarray}
N(a)
\end{eqnarray}
the one dimensional finite free differential module 
whose matrix in some basis is $G=a$.
\end{definition}
\begin{example}\label{Example : Jordan}
Let $M$ be a finite free differential module over $A$ with matrix $G \in M_n(K)$. Changing basis by $H \in GL_n(K)$ transforms $G$ into $HGH^{-1}$ by the base change rule \eqref{eq : base change rule}. Thus, we can choose a basis where the connection matrix is triangular with eigenvalues $a_1, \ldots, a_n \in K$. By \eqref{eq: exact sequence}, this implies $M$ has free one-dimensional sub-quotients isomorphic to $N(a_1), \ldots, N(a_n)$.
\end{example}
We now analyze the effect of change of derivation in 
$A$.
\begin{proposition}\label{Prop: change of deriv}
Let $(A,\partial)$ be a differential ring. 
For all $h\in A$ the map $h\cdot \partial$ is another 
derivation on $A$. Then
\begin{enumerate}
\item If  $(M,\nabla)$ is a  differential module over 
$(A,\partial)$, then $(M,h\nabla)$ is a differential module over $(A,h\partial)$;
\item If $\alpha:(M,\nabla)\to (N,\nabla')$ 
is an $A$-linear morphism commuting with the 
connections $\nabla$ and $\nabla'$, then 
$\alpha$ also commutes with $h\nabla$ and $h\nabla'$.
\end{enumerate}
We then have a functor
\begin{equation}
C_\partial^{h\partial}\;:\;\partial-\Mod(A)\;\xrightarrow{\qquad}\;
(h\partial)-\Mod(A)\;.
\end{equation}
Associating to $(M,\nabla)$ the module $(M,h\nabla)$
and which is the identity on the morphisms.

If $h\in A^{\times}$ this functor is an equivalence of categories.\hfill$\Box$
\end{proposition}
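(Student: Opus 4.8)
The plan is to verify everything by direct computation, the only subtlety being to keep track of which derivation each module structure refers to; the underlying $A$-module never changes.

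First I would record that $h\partial$ is indeed a derivation: it is additive, and for $f,g\in A$ one has $(h\partial)(fg)=h(\partial(f)g+f\partial(g))=((h\partial)f)g+f((h\partial)g)$. The same one-line Leibniz computation gives item (1): for $(M,\nabla)$ over $(A,\partial)$, the map $h\nabla$ is $\mathbb{Z}$-linear (since $\nabla$ is and multiplication by $h$ is additive), and for $f\in A$, $m\in M$,
\[
(h\nabla)(fm)=h\bigl(\partial(f)m+f\nabla(m)\bigr)=\bigl((h\partial)f\bigr)m+f\,\bigl((h\nabla)m\bigr),
\]
which is precisely the defining identity of a connection over $(A,h\partial)$. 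For item (2), if $\alpha\colon M\to N$ is $A$-linear with $\alpha\circ\nabla=\nabla'\circ\alpha$, then $A$-linearity of $\alpha$ gives $\alpha((h\nabla)m)=h\,\alpha(\nabla m)=h\,\nabla'(\alpha m)=(h\nabla')(\alpha m)$, so $\alpha$ is horizontal for the scaled connections as well. Since $C_\partial^{h\partial}$ leaves the underlying $A$-module and every morphism untouched and merely replaces $\nabla$ by $h\nabla$, it manifestly preserves composition and identities, hence is a functor.

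For the final assertion, when $h\in A^\times$ I would exhibit the inverse functor explicitly: it is the functor $C_{h\partial}^{\partial}$ obtained by the same construction applied to the differential ring $(A,h\partial)$ and the unit $h^{-1}\in A^\times$ (note that $h^{-1}\cdot(h\partial)=\partial$), which sends $(M,\nabla')$ over $(A,h\partial)$ to $(M,h^{-1}\nabla')$ over $(A,\partial)$ and is the identity on morphisms. The composite $C_{h\partial}^{\partial}\circ C_\partial^{h\partial}$ then sends $(M,\nabla)$ to $(M,h^{-1}(h\nabla))=(M,\nabla)$ and is the identity on morphisms, so it is literally the identity functor; symmetrically for the other composite. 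Hence $C_\partial^{h\partial}$ is in fact an isomorphism of categories, a fortiori an equivalence. There is no genuine obstacle in this proof: it is purely formal and uses neither Hypothesis \ref{Hyp : A is a diff ring} nor the ``without exponents nor logarithm'' condition. The one point worth a moment's care is that invertibility of $h$ is exactly what lets the scaling be undone, whereas for a non-invertible $h$ one only obtains a functor, without any claim of it being an equivalence.
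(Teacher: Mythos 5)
Your proof is correct and is exactly the direct verification the paper has in mind (the paper states this proposition with the proof omitted as immediate, ending in $\Box$): the Leibniz check for $h\nabla$, the $A$-linearity argument for morphisms, and the explicit inverse functor $C_{h\partial}^{\partial}$ built from $h^{-1}$ when $h\in A^\times$. Nothing is missing, and your observation that the construction is purely formal and independent of Hypothesis \ref{Hyp : A is a diff ring} is accurate.
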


\subsection{Scalar extension of the base ring}
\label{base change}
Let $(B,\partial_B)$ be another differential ring and $f:A\to 
B$ be a ring morphism. We assume that there exists an 
invertible element $h\in B^{\times}$ such that 
\begin{equation}
h\cdot\partial_B\circ f \;=\; f\circ\partial
\end{equation}
that is the derivation $h\partial_B$ 
extends the derivation $\partial$ of $A$. 

Let $(M,\nabla)$ be a differential module over 
$(A,\partial)$. 
The scalar extension $M\otimes_AB$ is naturally a 
differential module over $(B,h\partial_B)$ endowed with 
the connection 
\begin{equation}
\nabla':=\nabla\otimes\mathrm{Id}_B+\mathrm{Id}_M\otimes(h\partial_B)\;.
\end{equation}

\begin{definition}
The scalar extension of $M$ to $(B,\partial_B)$ is
the differential module 
\begin{equation}
C_{h\partial_B}^{\partial_B}(M,\nabla')\; = \;
(M,h^{-1}\nabla')\;.
\end{equation}
\end{definition}

\section{Semi-linear representations}
Let $G$ be a group. We recall that a 
\emph{$K$-linear representation of $G$} is a $K$-vector 
space $V$ together with a group homomorphism 
\begin{equation}\label{eq : rho def representation}
\rho:G\to \Aut_K(V)\;,
\end{equation} 
where $\Aut_K(V)$ is the group 
of $K$-linear automorphisms of $V$. For every $g\in G$ 
and $v\in V$ we set $g(v):=\rho(g)(v)$.
A \emph{morphism of $K$-linear representations} 
$\alpha:V\to W$ is a $K$-linear map commuting with the 
actions of $G$ on $V$ and $W$. We denote by 
$\Hom_K^G(V,W)$ the $K$-vector space of morphisms.
We denote by $\Rep_K(G)$ the category of 
$K$-linear representations and by 
\begin{equation}
\Rep_K^{\mathrm{fin}}(G)
\end{equation}
the full sub-category formed by representations of $G$ 
that are finite dimensional over $K$. 
If $V\in\Rep(G)$ we denote by 
$V^G\subseteq V$ the $K$-vector space of vectors $v\in 
V$ such that $g(v)=v$ for all $g\in G$.

Let now $E$ be a commutative 
$K$-algebra together with a group homomorphism $G\to 
\Aut_K(E)$, where $\Aut_K(E)$ is the group of 
$K$-linear ring automorphisms of $E$.  We call $E$ 
a \emph{$G$-algebra over $K$}.
An \emph{$E$-semi-linear representation of $G$} or 
simply a \emph{$G$-module over $E$} 
is an $E$-module $V$ 
together with a structure of $K$-linear representation of 
$G$ satisfying for all $g\in G$, $e\in E$, $v\in V$
\begin{equation}
g(e\cdot v)=g(e)g(v)\;.
\end{equation}
A \emph{morphism} $\alpha:V\to W$ 
between $E$-semi-linear representations of $G$ is an $E$-linear 
map commuting with the action of $G$. We denote by 
$\Hom_E^G(V,W)$ the group of morphisms. 
It is naturally a $E^G$-module, where $E^G$ is the 
sub-ring of $E$ formed by elements $e\in E$ 
satisfying $g(e)=e$ for all  $g\in G$. 
We denote by $\Rep_E(G)$ the category of $G$-modules 
over $E$. Again, all the operations of linear 
algebra exists we only list those that we use in this paper.
\begin{itemize}
\item A sequence of $G$-modules over $E$ 
is \emph{exact} if the underling sequence of 
$E$-modules is.

\item If $V$ and $W$ are two $G$-modules over $E$, 
their \emph{direct sum} is the $E$-module $V\oplus W$ 
together with the action of $G$ given by 
$g(v,w):=(g(v),g(w))$, $v\in V$, $w\in W$, $g\in G$.
\item The \emph{tensor product} $V\otimes W$ 
of $G$-modules over $E$ is the $E$-module 
$V\otimes_E W$ endowed with
the group action 
$g(v\otimes w)=g(v)\otimes g(w)$, for $v\in M$, $w\in N$, $g\in G$.

\item Let $F$ be another $G$-algebra over $K$. 
If $E\to F$ is a morphism of $K$-algebras 
commuting with the action of $G$ we have a 
\emph{scalar extension} functor 
$V\mapsto V\otimes_EF$. Namely, the 
module $V\otimes_EF$ is naturally a $G$-module over 
$F$. We have an equality of functors
\begin{eqnarray}
(V\otimes_EF)^G\;=\;\Hom_F^G(F,V\otimes_EF)\;.
\end{eqnarray}
It is an additive functor and if $F$ is flat over $E$ it is 
also left exact. It is also left exact 
on exact sequences formed 
by $G$-modules over $E$ that are flat as $E$-modules.
\item The $G$-algebra $E$   
is called \emph{the trivial $G$-module} 
or the \emph{unit object}, we denote it by $E$ or by 
$\textbf{1}$. More 
generally we say that an object is 
\emph{trivial} if it is isomorphic to a direct sum of 
copies of $E$.

If $E\to F$ is a morphism of $G$-algebras over $K$ 
as above, we say that a $G$-module $V$ over $E$ is 
\emph{trivialized by $F$} if $V\otimes_E F$ is trivial as 
a differential module over $F$.

For every \emph{trivial} $G$-module $V$ over $F$ 
we have an $F$-linear isomorphism of $G$-modules
\begin{equation}\label{eq: trud}
V^G\otimes_{F^{G}}F\;\xrightarrow{\;\sim\;}\;
V\;.
\end{equation}

\item The $E$-module of \emph{all} $A$-linear maps 
$\Hom_E(V,W)$ has a structure of $G$-module 
given by $g(f)=g\circ f\circ g^{-1}$. Therefore, the  
morphisms $\Hom_E^G(V,W)$ is the
$E^{G}$-module $\Hom_E(V,W)^G$.
\item The \emph{dual module} $V^*$ is the differential 
module $\Hom_E(V,E)$.
\end{itemize}
\subsection{Matrix of a semi-linear representation of 
$\mathbb{Z}$}
\label{Section: matrix of 1}
If $G=\mathbb{Z}$, we use the notation $[n]$ for the 
elements of $G$ in order to distinguish the 
action of $G$ from the multiplication by the 
elements of $\mathbb{Z}\subseteq K$.
A $K$-linear representation of $\mathbb{Z}$ is then completely 
determined by the action of the individual $K$-linear 
automorphism $[1]:V\to V$.  A $K$-linear 
representation of $\mathbb{Z}$ is nothing but a vector 
space together with a $K$-linear automorphism. We call 
$[1]$ the \emph{monodromy operator} and we will 
indicate it sometimes with the symbol
\begin{eqnarray}
\sigma\:=[1]\;.
\end{eqnarray}
Often, a $K$-linear representation of $\mathbb{Z}$ is 
also called $\sigma$-module over $K$.  
If $V\in\Rep_{K}^{\mathrm{fin}}(\mathbb{Z})$,  these 
objects are completely classified by the classical Jordan 
normal form of the endomorphism $\sigma$. 
Namely, in a hand $V$ admits a 
Jordan-Hölder series formed by rank one 
$\sigma$-modules. 
On the other hand, if $V_1,V_2$ are two rank one 
$\sigma$-modules, then the Yoneda extension group 
$\Ext(V_1,V_2)$ has dimension $1$ if $V_1\cong V_2$ 
and it is zero dimensional otherwise.
Namely, the 
monodromy automorphism $\sigma:V\to V$ 
admits a Jordan normal form. 
We can find a basis of $V$ in which the matrix of the 
monodromy operator $\sigma$ has matrix in Jordan canonical square blocks of the form 
\begin{equation}\label{eq : J(lambda,n)}
J(\lambda,n)\;:=\;\underbrace{\left(\sm{
\lambda&1&0&0&&\cdots&&0\\
0&\lambda&1&0&&\cdots&&0\\
0&0&\lambda&1&&\cdots&&0\\
&&&&\ddots\!\!&&\phantom{\cdots}&\vdots\\
&&&&&\phantom{\cdots}&\phantom{\cdots}&\phantom{\cdots}\\
\cdots&\cdots&\cdots&\cdots&\cdots&\lambda&1&0\\
\cdots&\cdots&\cdots&\cdots&\cdots&0&\lambda&1\\
\cdots&\cdots&\cdots&\cdots&\cdots&0&0&\lambda
}
\right)}_{n\textrm{ columns}}
\end{equation}
The monodromy operator $\sigma:E_A\to E_A$ makes 
$E_A$ a $\sigma$-module (or equivalently a 
$G$-algebra over $K$, with $G=\mathbb{Z}$). The 
following Lemma uses the Jordan normal form to prove 
that finite $\varphi$-modules 
are all trivialized by the ring $E_A$. 
\begin{proposition}\label{Lemma : every V is trivialized}
Every $V\in\Rep_K^{\mathrm{fin}}(\mathbb{Z})$ is 
trivialized by $E_A$.
\end{proposition}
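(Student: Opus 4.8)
The plan is to put the monodromy operator $\sigma=[1]$ of $V$ in Jordan form, reduce to the case of a single Jordan block, and then exhibit an explicit trivialising matrix over $E_A$ assembled from the elements $t^a$ and the binomials $\binom{\ell}{k}$ of Lemma \ref{Lemma : binoms generates}.

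First I would invoke the Jordan normal form: since $\sigma$ is a $K$-linear automorphism of a finite dimensional vector space over the algebraically closed field $K$, there is a basis in which its matrix is block diagonal with blocks $J(\lambda_i,n_i)$, $\lambda_i\in K^\times$ (all $\lambda_i\neq 0$ because $\sigma$ is invertible), cf. \eqref{eq : J(lambda,n)}. This expresses $V$, as a $\sigma$-module over $K$, as a finite direct sum of $\sigma$-invariant subspaces each with monodromy matrix a single block. Since $-\otimes_K E_A$ commutes with finite direct sums and a direct sum of trivial $\sigma$-modules over $E_A$ is again trivial, it is enough to treat $V=K^n$ with $\sigma(b_j)=\sum_i G_{ij}b_i$, where $G:=J(\lambda,n)=\lambda\,\mathrm{Id}+N$ and $N$ is the nilpotent matrix with $1$'s on the superdiagonal.

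Next I would translate triviality into a matrix identity. Writing the elements of $V\otimes_K E_A$ as column vectors over $E_A$, the $\sigma$-invariant vectors are exactly the columns $\vec f$ with $\sigma(\vec f)=G^{-1}\vec f$ (where $\sigma$ is applied entrywise); collecting $n$ of them into a matrix $F\in M_n(E_A)$, the module $V\otimes_K E_A$ is trivial as soon as $F\in GL_n(E_A)$ and $\sigma(F)=G^{-1}F$, since the columns of such an $F$ then form an $E_A$-basis of invariants in which $\sigma$ is the identity. (Equivalently one may appeal to \eqref{eq: trud} together with $E_A^{d_\sigma=0}=A$ from Proposition \ref{Prop : EAdsigma=0 A}.) To produce $F$, I would first pick, using the bijection \eqref{eq : abuse gamma}, an element $a\in\widetilde{K/\mathbb{Z}}$ with $\gamma(a)=\lambda^{-1}$, so that $t^a$ is invertible in $E_A$ and $\sigma(t^a)=\lambda^{-1}t^a$. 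Then I would set
\[
P\;:=\;-\lambda^{-1}(\mathrm{Id}+\lambda^{-1}N)^{-1}N\;\in\;M_n(K),
\]
a polynomial in $N$ — hence commuting with $N$ and strictly upper triangular, so $P^n=0$ — and
\[
U\;:=\;\sum_{k=0}^{n-1}P^k\binom{\ell}{k}\;\in\;M_n(K[\ell]),
\]
which is unipotent upper triangular and therefore lies in $GL_n(K[\ell])\subseteq GL_n(E_A)$. Using the shift relation $\sigma(\binom{\ell}{k})=\binom{\ell}{k}+\binom{\ell}{k-1}$ of \eqref{dsigma on binomial} one gets $\sigma(U)=\sum_{k=0}^{n-1}(P^k+P^{k+1})\binom{\ell}{k}$, and from $(\mathrm{Id}+\lambda^{-1}N)P=-\lambda^{-1}N$ one checks the identity $(\mathrm{Id}+\lambda^{-1}N)(P^k+P^{k+1})=P^k$, whence $(\mathrm{Id}+\lambda^{-1}N)\sigma(U)=U$, i.e. $\sigma(U)=(\mathrm{Id}+\lambda^{-1}N)^{-1}U$. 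Consequently $F:=t^aU\in GL_n(E_A)$ satisfies
\[
\sigma(F)\;=\;\lambda^{-1}t^a(\mathrm{Id}+\lambda^{-1}N)^{-1}U\;=\;(\lambda\,\mathrm{Id}+N)^{-1}t^aU\;=\;G^{-1}F,
\]
so $V\otimes_K E_A$ is trivial.

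I do not expect a genuine obstacle: the point of the binomial basis $\{\binom{\ell}{k}\}$ of $K[\ell]$ is precisely that $\sigma$ acts on it by a unipotent ``shift'', which makes the recursion $U_k=P^k$ drop out automatically, while the semisimple part of the monodromy is absorbed by $t^a$ thanks to the bijectivity of $\gamma$. The only points that demand care are the bookkeeping of the semilinear base-change conventions (so that $G^{-1}$, not $G$, appears on the invariant side) and the verification that $U$, being unipotent upper triangular over the commutative ring $K[\ell]$, is invertible. A cosmetically different organisation — tensoring first by a rank one $\sigma$-module to reduce to the purely unipotent block $\lambda=1$, then trivialising that — leads to the same computation.
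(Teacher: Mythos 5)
Your proof is correct and follows essentially the same route as the paper: reduction to a single Jordan block, absorption of the eigenvalue by $t^a$ with $\gamma(a)=\lambda^{-1}$, and use of the binomial basis of $K[\ell]$ with its shift behaviour under $\sigma$ (Lemma \ref{Lemma : binoms generates}). The only difference is cosmetic: you exhibit the unipotent trivializing matrix $U=\sum_{k}P^k\binom{\ell}{k}$ in closed form, whereas the paper solves the corresponding triangular system recursively via the surjectivity of $d_\sigma$ on $K[\ell]$.
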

\begin{proof}
Without loss of generality 
we may assume 
that $V$ is indecomposable 
(i.e. $V$ is not direct sum of sub-objects). 
Let $b_1,\ldots,b_n\in V$ be a basis in 
which the matrix $J\in GL_n(K)$ of the monodromy 
operator $[1]$ is in Jordan canonical form. Since $V$ is 
indecomposable, we have 
$J=J(\lambda,n)$
\if{
=\lambda
\cdot\mathrm{Id}+N_n$, where $N_n$ is 
the standard nilpotent matrix\footnote{i.e. 
$N_n=(\epsilon_{i,j})_{i,j=1,\ldots,n}$, where 
$\epsilon_{i,j}=0$ for all $i\neq j+1$ and 
$\varepsilon_{i,j+1}=1$.}  
}\fi
(cf. \eqref{eq : J(lambda,n)}).
In other words we have
\begin{equation}\label{eq : first base change}
\left\{
\sm{[1](b_1)&=&\lambda\cdot b_1&\\
\\
[1](b_k)&=&\lambda\cdot b_k&+&b_{k-1}&\textrm{ for all }k=2,\ldots,n.
}
\right.
\end{equation}
The action of $[1]$ in 
the basis $b_i\otimes 1\in V\otimes_KE_A$ is again $J$. 
Let $a\in\widetilde{K/\mathbb{Z}}$ be an element such 
that $\gamma(a)=\lambda^{-1}$ 
(cf.  \eqref{eq : abuse gamma}). 
We may now perform a first base change replacing every 
$b_i\otimes 1$ by $b_i':=t^{a}(b_i\otimes 1)$. Then
\begin{equation}\label{eq : first base change-2}
\left\{
\sm{[1](b_1')&=&b_1'&\\
\\
[1](b_k')&=&b_k'&+&\lambda^{-1}b_{k-1}'&\textrm{ for all }k=2,\ldots,n.
}
\right.
\end{equation}
The new matrix of $[1]$ being $\lambda^{-1}J(\lambda,n)$,
with a base change by a matrix in $GL_n(K)$ 
(which preserves the eigenvalues of $[1]$) we can 
find another basis of $V\otimes E_A$ of the 
form $J(1,n)$.
In other words, we can assume $\lambda=1$ in \eqref{eq : first base change-2}.

We now perform another base change after which the 
matrix of $[1]$ will be the identity. More precisely, 
we will prove that there exists a new basis $b_1'',\ldots,b_n''$ of $V\otimes_KE_A$ of 
the form 
\begin{eqnarray}
b_k''=x_{k,k}b_k'+x_{k,k-1}b_{k-1}'+x_{k,k-2}b_{k-2}'+\cdots+x_{k,1}b_{1}',\quad\textrm{for all }k=1,\ldots,n
\end{eqnarray}
where
\begin{enumerate}
\item for all $k=1,\ldots,n$ one has $x_{k,k}=1$;
\item for all $i,j$ the coefficients $x_{i,j}$ belong to $K[\ell]$ 
\item for all $k=1,\ldots,n$ one has $[1](b_k'')=b_k''$.
\end{enumerate}
By convenience of notation, for all $k=1,\ldots,n$ 
we set $b_0'=0$. The fact that 
$x_{k,k}=1$ implies that for every choice of $x_{i,j}$ 
the family $\{b_k''\}_{k=1,\ldots,n}$ is  a basis of 
$V\otimes_KE_A$.
With this setting, for all $k=1,\ldots,n$, one has
\begin{eqnarray}
[1](b_k'')-b_k''
&\;=\;&\sum_{i=1}^k\sigma(x_{k,i})
(b_{i}'+b_{i-1}')-\sum_{i=1}^kx_{k,i}b_i'\\
&\;=\;&\sum_{i=1}^k(\sigma(x_{k,i})-x_{k,i})b_i'
+\sum_{i=1}^k\sigma(x_{k,i})b_{i-1}'\\
&\;=\;&\sum_{i=1}^{k}(\sigma(x_{k,i})-x_{k,i})b_i'
+\sum_{j=1}^{k-1}\sigma(x_{k,j+1})b_{j}'\\
&\;=\;&0\cdot b_k'+\sum_{i=1}^{k-1}(\sigma(x_{k,i})-x_{k,i}+\sigma(x_{k,i+1}))b_i'
\end{eqnarray}
which provides for all $k=2,\ldots,n$ the system of conditions
\begin{equation}\label{eq : second base change}
\left\{\sm{
d_\sigma(x_{k,k-1})&=&-\sigma(x_{k,k})&=&-1\\
d_\sigma(x_{k,k-2})&=&-\sigma(x_{k,k-1})\\
d_\sigma(x_{k,k-3})&=&-\sigma(x_{k,k-2})\\
&&\\
\cdots&&\\
d_\sigma(x_{k,1})&=&-\sigma(x_{k,2})\;.
}
\right.
\end{equation}
The claim then follows from the fact that 
$d_\sigma:K[\ell]\to K[\ell]$ is surjective (cf. Lemma 
\ref{Lemma : binoms generates}).
\end{proof}

\section{The Monodromy functor}
Let $M$ be a differential module over $A$. 
The module $M\otimes_AE_A$ carries the connection 
\begin{eqnarray}
\nabla=\nabla\otimes 1+1\otimes\partial
\end{eqnarray} 
and also the 
semi-linear automorphism $1\otimes\sigma$. Therefore, 
the kernel of the connection 
$(\M\otimes_AE_A)^{\nabla=0}=\Sol_{E_A}(M)$ 
is a $K$-vector space with an 
action of $1\otimes\sigma$. We define an action of 
$\mathbb{Z}$ on $(\M\otimes_AE_A)^{\nabla=0}$ as
\begin{eqnarray}
[n]\cdot x\;:=\;(1\otimes\sigma)^n(x)\;,\qquad 
n\in\mathbb{Z}\;,\quad x\in 
(\M\otimes_AE_A)^{\nabla=0}\;.
\end{eqnarray}
We use the notation $[n]$ in order to distinguish this 
action of $\mathbb{Z}$ 
from the multiplication by the elements of 
$\mathbb{Z}\subseteq K$. The $K$-vector space $\Sol_{E_A}(M)$ together with this action of $\mathbb{Z}$ is an object of 
$\Rep_K(\mathbb{Z})$ that we denote by 
\begin{eqnarray}
\Mon(M)\;. 
\end{eqnarray}
We call it the \emph{monodromy} of $(M,\nabla)$. It is 
clear that 
\begin{equation}
\Mon\;:\;\partial-\Mod(A)\;\xrightarrow{\qquad}\;\;
\Rep_{K}(\mathbb{Z})\;
\end{equation}
is an additive functor. It is left exact because $E_A$ is 
a free $A$-module.
The individual operator $[1]\in\mathbb{Z}$ 
acting on $\Mon(M)$ 
is called \emph{the monodromy operator}. It is clear that 
the monodromy operator determines the entire action of 
$\mathbb{Z}$ as $[n]=[1]^n$ for all $n\in\mathbb{Z}$. 

We now construct a functor in the other direction
\begin{eqnarray}
\Rm\;:\;\Rep_{K}(\mathbb{Z})
\;\xrightarrow{\qquad}\;\partial-\Mod(A)\;.
\end{eqnarray}

Let $V$ be a $K$-linear representation of 
$\mathbb{Z}$. The $E_A$-module $V\otimes_KE_A$ is a 
$K$-semi-linear representation of $\mathbb{Z}$ with the 
action $[n](v\otimes x):=[n](v)\otimes \sigma^n(x)$. 
Moreover, it carries a connection $1\otimes\partial$ 
which commutes with the action of $\mathbb{Z}$, 
because $\partial$ commutes with $\sigma$ as 
endomorphisms of $E_A$. 
Therefore, the $A$-module 
\begin{equation}
\Rm(V)\;=\;
(V\otimes_KE_A)^{\mathbb{Z}}=\mathrm{Ker}([1]-1\;:
\;V\otimes_KE_A\to V\otimes_KE_A)
\end{equation}
formed by the elements of $V\otimes_KE_A$ 
fixed by the action of $\mathbb{Z}$ carries the 
connection $1\otimes\partial$ and it is a differential 
module over $A$. 

\begin{remark}
\label{dependence on MON and Rm the 
choices}
The definition of the functors $\Mon$ and $\Rm$ depends on 
the following choices 
\begin{enumerate}
\if{
\item A coordinate $t$ of $K[t,t^{-1}]$; and 
hence of the derivation 
$\partial=t\frac{d}{dt}$ on $K[t,t^{-1}]$ (the unique 
$K$-linear derivation of $K[t,t^{-1}]$ such that $\partial(t)=t$);
}\fi
\item The choice of the structural injective 
morphism $\rho:K[t,t^{-1}]\to A$;
\item The choice of a derivation $\partial:A\to A$ extending 
$t\frac{d}{dt}:K[t,t^{-1}]\to K[t,t^{-1}]$;
\item The choice of an isomorphism 
$\gamma:K/\mathbb{Z}\simto K^\times$.
\end{enumerate}
\end{remark}
\begin{notation}\label{Notation : mon_rho}
When necessary, we will denote $\Mon$ as
\begin{equation}
\Mon=\Mon_{\rho}=\Mon_{\rho,\partial}=\Mon_{\rho,\partial,\gamma}\;.
\end{equation}
The same notation will be used for $\Rm$.
\end{notation}
The following Lemma gives a first general link between internal $\Hom$, internal $\otimes$ and Dual.
\begin{lemma}\label{Lemma : Can morphisms M}
For all $M,N\in\partial-\Mod(A)$ 
one has functorial $K$-linear homomorphisms of 
$K$-linear representations of $\mathbb{Z}$
\begin{eqnarray}
\Mon(\Hom_A(M,N))&\;\to\;&\Hom_K(\Mon(M),\Mon(N))\label{eq:arrowonhom}\\
\Mon(M\otimes_A N)&\leftarrow&\Mon(M)\otimes_K\Mon(N)
\label{eq:arrowonhom-2}\\
\Mon(M^*)&\to&\Mon(M)^*\label{eq:arrowonhom-3}\;.
\end{eqnarray}

Analogously, for all $V,W\in\Rep_{K}(\mathbb{Z})$ 
one has functorial $A$-linear homomorphisms of 
differential modules over $A$
\begin{eqnarray}
\Rm(\Hom_K(V,W))&\;\to\;&\Hom_A(\Rm(V),\Rm(W))\\
\Rm(V\otimes_KW)&\leftarrow&\Rm(V)\otimes_A\Rm(W)\\
\Rm(V^*)&\to&\Rm(V)^*\;.
\end{eqnarray}
\end{lemma}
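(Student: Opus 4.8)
The statement is that there are natural ``comparison'' or ``lax monoidality'' morphisms for $\Mon$ (and dually for $\Rm$) relating the functors to the standard internal $\Hom$, tensor and dual. All three arrows for $\Mon$ follow the same principle, so the plan is to treat the $\Hom$ map carefully and then deduce the tensor and dual maps formally. First I would recall that $\Mon(M)=(M\otimes_AE_A)^{\nabla=0}=\Sol_{E_A}(M)$, with the $\mathbb{Z}$-action induced by $1\otimes\sigma$, and that by the identity $\Sol_B(M)=\Hom_B^\nabla(B,M\otimes_AB)$ recorded in Section~\ref{Section : Diff mod} a solution is the same datum as a horizontal $E_A$-linear map $E_A\to M\otimes_AE_A$. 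So an element of $\Mon(\Hom_A(M,N))$ is a horizontal $E_A$-linear map $\varphi:E_A\to \Hom_A(M,N)\otimes_AE_A$. The key step is to use the canonical $E_A$-linear evaluation $\Hom_A(M,N)\otimes_AE_A\to \Hom_{E_A}(M\otimes_AE_A,\,N\otimes_AE_A)$ (well defined, and compatible with connections, since it is induced from the standard $A$-linear evaluation $\Hom_A(M,N)\to$ internal hom; one checks it intertwines the connection $\nabla(f)=f\circ\nabla-\nabla\circ f$ on both sides). Composing, $\varphi$ gives a horizontal $E_A$-linear map $M\otimes_AE_A\to N\otimes_AE_A$, which therefore sends $\nabla$-horizontal elements to $\nabla$-horizontal elements, i.e.\ restricts to a $K$-linear map $\Mon(M)\to\Mon(N)$. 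That assignment is the map \eqref{eq:arrowonhom}; one then checks it is $\mathbb{Z}$-equivariant by unwinding that $1\otimes\sigma$ acts on all the modules involved compatibly with the evaluation map, and functoriality in $M,N$ is immediate from naturality of evaluation.

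For the tensor map \eqref{eq:arrowonhom-2}, given solutions $x\in (M\otimes_AE_A)^{\nabla=0}$ and $y\in(N\otimes_AE_A)^{\nabla=0}$, the element $x\otimes y$, viewed in $(M\otimes_AN)\otimes_AE_A$ via the natural map $(M\otimes_AE_A)\otimes_{E_A}(N\otimes_AE_A)\to (M\otimes_AN)\otimes_AE_A$, is killed by the tensor-product connection $\nabla\otimes1+1\otimes\nabla$ by the Leibniz rule, hence lies in $\Mon(M\otimes_AN)$; bilinearity over $K$ gives the $K$-linear map from $\Mon(M)\otimes_K\Mon(N)$, and $\mathbb{Z}$-equivariance follows from $1\otimes\sigma$ being a ring endomorphism of $E_A$, so $\sigma(x\otimes y)=\sigma(x)\otimes\sigma(y)$ in the relevant sense. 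The dual map \eqref{eq:arrowonhom-3} is then the special case $N=A=\mathbf{1}$ of \eqref{eq:arrowonhom}, noting $\Mon(\mathbf{1})=E_A^{\nabla=0}\supseteq K$ contains $1$ as a fixed vector, so $\Mon(M^*)=\Mon(\Hom_A(M,A))\to\Hom_K(\Mon(M),\Mon(\mathbf{1}))$ followed by evaluation at $1$, landing in $\Mon(M)^*=\Hom_K(\Mon(M),K)$. The dual-functor statements for $\Rm$ are proved in exactly the same way, with the roles of $\partial$ and $\sigma$ interchanged: one uses the canonical $E_A$-linear evaluations together with the fact that the connection $1\otimes\partial$ commutes with $[1]-1$ on $V\otimes_KE_A$, so that taking $\mathbb{Z}$-invariants is compatible with the Leibniz and evaluation identities, and $\Rm(\mathbf{1})=(E_A)^{\mathbb{Z}}=E_A^{d_\sigma=0}=A$ by Proposition~\ref{Prop : EAdsigma=0 A} provides the unit needed for the dual case.

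The only genuinely delicate point — and the one I would spell out in full — is checking that the natural evaluation and tensor-comparison maps $\Hom_A(M,N)\otimes_AE_A\to\Hom_{E_A}(M\otimes_AE_A,N\otimes_AE_A)$ and $(M\otimes_AE_A)\otimes_{E_A}(N\otimes_AE_A)\to(M\otimes_AN)\otimes_AE_A$ actually commute with the connections (and with $1\otimes\sigma$), since $E_A$ is only flat, not necessarily projective, over $A$, so these comparison maps need not be isomorphisms and we genuinely have only morphisms, not isomorphisms, in the conclusion. Everything else is bookkeeping: the Leibniz rule for $\nabla\otimes1+1\otimes\nabla$, the multiplicativity of $\sigma$, and the naturality of all the linear-algebra constructions in $\partial-\Mod(A)$ and $\Rep_K(\mathbb{Z})$. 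No cardinality or structural hypothesis on $A$ beyond Hypothesis~\ref{Hyp : A is a diff ring} is needed for this lemma.
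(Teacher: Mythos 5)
Your plan is essentially the paper's own proof: both construct \eqref{eq:arrowonhom} from the canonical map $\Hom_A(M,N)\otimes_AE_A\to\Hom_{E_A}(M\otimes_AE_A,N\otimes_AE_A)$, check compatibility with $\nabla$ and $1\otimes\sigma$, and restrict to solution spaces; \eqref{eq:arrowonhom-2} comes from the canonical map $(M\otimes_AE_A)\otimes_K(N\otimes_AE_A)\to(M\otimes_AN)\otimes_AE_A$; \eqref{eq:arrowonhom-3} is the case $N=\bs{1}$ using $\Mon(\bs{1})=\bs{1}$; and the $\Rm$ statements are handled symmetrically. One small correction: $E_A$ is actually \emph{free} over $A$ (basis $t^{\widetilde{K/\mathbb{Z}}}\cdot\ell^k$), not merely flat, which is what the paper invokes for the injectivity of the Bourbaki comparison map -- though, as you note, only the existence of the morphisms (not their bijectivity) is needed here.
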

\begin{proof}
Let us prove the existence of \eqref{eq:arrowonhom}.
We have a canonical diagram
\begin{equation}
\!\!\!\!
\!\!\!\!
\xymatrix{\Hom_A(M,N)\otimes_A E_A\ar[r]&\Hom_{E_A}(M\otimes_{A}E_A,N\otimes_AE_A)\\
(\Hom(M,N)\otimes_AE_A)^{\nabla=0}
\ar[r]\ar@{}|-{\cup}[u]&
(\Hom_{E_A}(M\otimes_{A}E_A,N\otimes_AE_A))^{\nabla=0}
\ar@{}|-{\cup}[u]\ar[d]\\
&\Hom_{K}((M\otimes_{A}E_A)^{\nabla=0},(N\otimes_AE_A)^{\nabla=0})}
\end{equation}
where the first line from the top is the canonical one 
(cf. \cite[\S5, N.3]{Bou-Alg-II}), and it is injective because $E_A$ is free 
as $A$-module \cite[\S5, N.3, Proposition 7]{Bou-Alg-II}. 
It is not hard to prove that this morphism commutes with the actions of 
$\nabla$ and $[1]$. The second line of the diagram is just the 
restriction of the first-one to the $E_A^{\partial=0}$-modules of solutions, 
since the connections commute with the actions of $[1]$, the objects in the 
second line inherit an action of $[1]$.
Now, the vertical arrow associates to a map 
$\alpha:\Hom_{E_A}(M\otimes_{A}E_A,N\otimes_AE_A)$ commuting 
with the connections (but possibly not with $[1]$) its restriction to the 
spaces of solutions 
$(M\otimes_{A}E_A)^{\nabla=0}$ and 
$(M\otimes_{A}E_A)^{\nabla=0}$. It is not hard to show that it commutes 
with the action of $[1]$.

The arrow \eqref{eq:arrowonhom-3} is a consequence of 
\eqref{eq:arrowonhom} and the fact that $\Mon(\bs{1})=\bs{1}$.

Let us prove the existence of \eqref{eq:arrowonhom-2}. 
We have a canonical map 
$
(M\otimes_AE_A)\otimes_K(N\otimes_AE_A)\to
(M\otimes_AE_A)\otimes_{E_A}(N\otimes_AE_A)\cong
(M\otimes_AN)\otimes_AE_A
$.
This map commutes with the connections 
and the actions of $[1]$. Taking the fixed points, it induces a map
$
(M\otimes_AE_A)^{\nabla=0}\otimes_K(N\otimes_AE_A)^{\nabla=0}\to
((M\otimes_AE_A)\otimes_{E_A}(N\otimes_AE_A))^{\nabla=0}
\cong
((M\otimes_AN)\otimes_AE_A)^{\nabla=0}
$
which is \eqref{eq:arrowonhom-2}.

The proof for the analogous properties of $\Rm$ is similar.
\end{proof}
\begin{example}\label{Exemple: Vlambda N(a)}
Let $a\in\widetilde{K/\mathbb{Z}}$ and 
$\lambda:=\gamma(-a)\in K^\times$. 
Denote by 
\begin{equation}\label{eq : Vlambda}
V_\lambda
\end{equation}
the one dimensional $\sigma$-module over $K$ such that $\sigma:V\to V$ 
is the multiplication  by $\lambda$. Then 
(cf. Définition \ref{Definition : N(a)})
\begin{eqnarray}\label{eq : MonNa=Vlambda}
\Mon(N(a))\;=\;V_\lambda
\qquad \textrm{ and }\qquad
\Rm(V_\lambda)\;=\;N(a)\;.
\end{eqnarray}
To see this it is enough to follows the first lines of 
the proof of Proposition \ref{Lemma : every V is trivialized}. 
If $b\in N(a)$ is the basis such that $\nabla(b)=ab$, then 
$(N(a)\otimes_AE_A)^{\nabla=0}=K\cdot(b\otimes t^{-a})$ and the action 
of $1\otimes\sigma$ on $b\otimes t^{-a}$ is the multiplication by 
$\gamma(-a)$.
\end{example}

\section{Regular singular differential modules}

The following two Lemmas show the interest of Definition \ref{Def : algebras without exp nor log}. 
\begin{lemma}\label{Lemma: up down}
Let $A$ be a differential $K$-algebra 
without exponents nor logarithm. 
If $V\in\Rep_K(\mathbb{Z})$ one has 
\begin{equation}\label{eq : VEnabla=V}
(V\otimes_KE_A)^{1\otimes\partial=0}\;=\;V\;.
\end{equation}
Analogously, if $M\in\partial-\Mod(A)$ is free as $A$-module, then
\begin{equation}\label{eq : MEsigma=M}
(M\otimes_AE_A)^{1\otimes\sigma=1}\;=\;M\;.
\end{equation}
\end{lemma}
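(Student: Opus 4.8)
The plan is to handle the two identities in parallel, each time reducing to a statement already proven about kernels inside $E_A$. For the first identity \eqref{eq : VEnabla=V}, note $V \otimes_K E_A = \bigoplus_{i} E_A$ as a differential module with connection $1 \otimes \partial$ acting componentwise, once we fix a $K$-basis of $V$ (even if $V$ is infinite-dimensional, the tensor product is a direct sum of copies of $E_A$ indexed by that basis, and $1\otimes\partial$ acts diagonally). Hence $(V\otimes_K E_A)^{1\otimes\partial=0} = V \otimes_K (E_A^{\partial=0})$, because taking kernels commutes with (possibly infinite) direct sums. Now apply Proposition \ref{Prop : kernel of partial}: since $A$ is a differential $K$-algebra without exponents nor logarithm, $E_A^{\partial=0} = K$, so $(V\otimes_K E_A)^{1\otimes\partial=0} = V \otimes_K K = V$. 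One should check that the inclusion $V = V \otimes_K K \hookrightarrow V \otimes_K E_A$ is the obvious one (identifying $V$ with $V \otimes 1$), which is immediate.

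For the second identity \eqref{eq : MEsigma=M}, the symmetric strategy is to use Proposition \ref{Prop : EAdsigma=0 A} in place of Proposition \ref{Prop : kernel of partial}. Since $M$ is free as an $A$-module, pick an $A$-basis $(m_1,\dots,m_n)$; then $M \otimes_A E_A \cong E_A^{\,n}$ as an $E_A$-module, and the semi-linear automorphism $1 \otimes \sigma$ acts on $E_A^{\,n}$ componentwise (it sends $m_i \otimes x$ to $m_i \otimes \sigma(x)$, since $\sigma$ fixes $A$ pointwise and in particular fixes the $m_i$). Therefore $(M\otimes_A E_A)^{1\otimes\sigma=1} = \bigoplus_{i=1}^n (E_A)^{d_\sigma=0} = \bigoplus_{i=1}^n A = M$, using $E_A^{d_\sigma=0} = A$ from Proposition \ref{Prop : EAdsigma=0 A}\eqref{Prop : EAdsigma=0 A-2}. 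Again the identification is the tautological one, $m_i \leftrightarrow m_i \otimes 1$.

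The only subtlety — and the main thing to be careful about rather than a genuine obstacle — is that in the first case $V$ may be infinite-dimensional, so one cannot literally write $E_A^{\,n}$; instead one argues that for any index set $I$ and any $K$-linear operator acting diagonally on $E_A^{(I)}$ via a fixed operator $T$ on $E_A$, the kernel is $(E_A^{T=0})^{(I)}$, which is a formal consequence of the fact that an element of a direct sum has only finitely many nonzero components and the operator is applied componentwise. In the second case $M$ free of finite rank is given by hypothesis, so no such care is needed. In both cases the essential input is that $E_A$ is (as the appropriate kind of kernel-extension) exactly $K$ for $\partial$ and exactly $A$ for $d_\sigma$ — this is precisely what Definition \ref{Def : algebras without exp nor log} was designed to guarantee, and it is where the hypotheses on $A$ enter.
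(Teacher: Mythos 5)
Your proof is correct and follows essentially the same route as the paper's: fix a basis, observe that $1\otimes\partial$ (resp.\ $1\otimes\sigma$) acts coefficientwise, and invoke $E_A^{\partial=0}=K$ from Proposition \ref{Prop : kernel of partial} (resp.\ $E_A^{d_\sigma=0}=A$ from Proposition \ref{Prop : EAdsigma=0 A}). One small caveat: the hypothesis on $M$ is only that it is free, not finite free, but this is harmless since the same direct-sum argument you spell out for infinite-dimensional $V$ applies verbatim to an infinite $A$-basis of $M$.
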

\begin{proof}
If $\{v_i\}_i\subset 
V$ is a basis, then $(v_i\otimes1)_i$ is a basis of 
$V\otimes_KE_A$ and every $x\in V\otimes_KE_A$ can 
be uniquely written as $x=\sum_{i=1}^nv_i\otimes x_i$, 
with $x_i\in E_A$. Then, $(1\otimes\partial)(x)=0$ if 
and only if $\partial(x_i)=0$ for all $i$. By 
Proposition \ref{Prop : kernel of partial}, it follows that 
$x_i\in K$ for all $i$ and $x\in 
V\otimes_KK=V$. 
Similarly, the claim about $M$ follows from Proposition \ref{Prop : EAdsigma=0 A}.
\end{proof}
The following Lemma says that $\Mon$ and $\Rm$ commute with the 
fiber functors.
\begin{lemma}
\label{Lemma: Mon and R commutes to solutions}
Let $A$ be a differential $K$-algebra 
without exponents nor logarithm.
For every $M\in\partial-\Mod(A)$ we have a functorial injective $A^{\partial=0}$-linear inclusion 
\begin{equation}
M^{\nabla=0}\;\subseteq\;\Mon(M)^{\mathbb{Z}}\;.
\end{equation}
This map is an isomorphism if $M$ is free as 
$A$-module.

Let $V\in\mathrm{Rep}_{K}^{\mathrm{fin}}(\mathbb{Z})$, then 
\begin{equation}
\Rm(V)^{\nabla=0}\;\cong\;V^{\mathbb{Z}}\;.
\end{equation}
\end{lemma}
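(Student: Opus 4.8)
The plan is to reduce both assertions directly to Lemma~\ref{Lemma: up down}. First I would make the claimed inclusion explicit. Since $0\in\widetilde{K/\mathbb{Z}}$, the element $1=t^0\cdot\ell^0$ belongs to the chosen $A$-basis of $E_A$, so $A=A\cdot 1$ is an $A$-module direct summand of $E_A$; consequently the map $M\to M\otimes_A E_A$, $m\mapsto m\otimes 1$, is a split injection for every $A$-module $M$. If $\nabla(m)=0$, then $\nabla(m\otimes 1)=\nabla(m)\otimes 1+m\otimes\partial(1)=0$ and $(1\otimes\sigma)(m\otimes 1)=m\otimes\sigma(1)=m\otimes 1$, so $m\otimes 1$ lies in $\Mon(M)^{\mathbb{Z}}=(M\otimes_A E_A)^{\nabla=0}\cap(M\otimes_A E_A)^{1\otimes\sigma=1}$. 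This yields the desired injective, $A^{\partial=0}$-linear map $M^{\nabla=0}\to\Mon(M)^{\mathbb{Z}}$; functoriality in $M$ is immediate because $\Mon(f)$ is the restriction of $f\otimes\mathrm{id}_{E_A}$ to the solution spaces.

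For the surjectivity when $M$ is free over $A$, I would take $x\in\Mon(M)^{\mathbb{Z}}$ and apply the second part of Lemma~\ref{Lemma: up down}, namely \eqref{eq : MEsigma=M}: the condition $(1\otimes\sigma)(x)=x$ forces $x\in M\otimes 1$, so $x=m\otimes 1$ with $m\in M$ uniquely determined. The remaining condition $\nabla(x)=0$ then reads $\nabla(m)\otimes 1=0$, and since $m\mapsto m\otimes 1$ is injective this gives $\nabla(m)=0$. Hence $x$ is in the image of $M^{\nabla=0}$, and the inclusion is an equality.

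For the statement about $\Rm$, recall $\Rm(V)=(V\otimes_KE_A)^{\mathbb{Z}}$ with connection $1\otimes\partial$, so $\Rm(V)^{\nabla=0}$ is the set of $x\in V\otimes_KE_A$ that are fixed by the $\mathbb{Z}$-action and killed by $1\otimes\partial$. By the first part of Lemma~\ref{Lemma: up down}, i.e. \eqref{eq : VEnabla=V}, the kernel of $1\otimes\partial$ on $V\otimes_KE_A$ is exactly $V\otimes 1$; therefore $\Rm(V)^{\nabla=0}=\{v\otimes 1:[n](v\otimes 1)=v\otimes 1\text{ for all }n\in\mathbb{Z}\}$. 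Since $[n](v\otimes 1)=[n](v)\otimes\sigma^n(1)=[n](v)\otimes 1$, this condition is equivalent to $v\in V^{\mathbb{Z}}$, so $\Rm(V)^{\nabla=0}=V^{\mathbb{Z}}\otimes 1\cong V^{\mathbb{Z}}$. (The finiteness of $V$ is not really used here; it only matters for the statement.)

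There is no serious obstacle: everything rests on Lemma~\ref{Lemma: up down}. The only points that require a little care are that $m\mapsto m\otimes 1$ is injective (so that vanishing of $\nabla$ descends from $M\otimes_A E_A$ to $M$), and that it is harmless to impose the two commuting conditions — being a solution of the connection and being invariant under $\sigma$ (respectively, under $\mathbb{Z}$) — in either order, since intersecting the corresponding subspaces commutes.
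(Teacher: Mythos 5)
Your argument is correct and follows essentially the same route as the paper: embed $M$ (resp.\ $V$) into $M\otimes_A E_A$ (resp.\ $V\otimes_K E_A$) via $m\mapsto m\otimes 1$, use that imposing $\nabla=0$ and $\sigma$-invariance amounts to intersecting two subspaces in either order, and invoke Lemma~\ref{Lemma: up down} to identify the $\sigma$-fixed (resp.\ $\partial$-horizontal) part with $M$ (resp.\ $V$) when the module is free. Your explicit use of the splitting $A\subseteq E_A$ to get injectivity is just a slightly more detailed version of the paper's appeal to freeness of $E_A$ over $A$; no substantive difference.
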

\begin{proof}
Since $E_A$ is a free $A$-module, we have an inclusion 
$M\subseteq M\otimes_AE_A$, given by  $m\mapsto m\otimes 1$, which is $A$-linear and 
commutes with the connections. The action of 
$[1]:=1\otimes\sigma$ on $M\otimes_AE_A$ stabilizes 
the image of $M$ and we then have 
\begin{eqnarray}\label{eq :153}
\nonumber
M^{\nabla=0}\;=\;
((M\otimes_AE_A)^{[1]=1})^{\nabla=0}\subseteq 
(M\otimes_AE_A)^{\nabla=0}\;=\;\Mon(M)\;.
\end{eqnarray}
Now, the operator 
$[1]:=1\otimes\sigma$ acting on $M\otimes_AE_A$ commutes with 
$\nabla:=\nabla\otimes1+1\otimes\partial$. 
Therefore we have 
$\Mon(M)^{\mathbb{Z}}:=((M\otimes_AE_A)^{\nabla=0})^{[1]=1}=
((M\otimes_AE_A)^{[1]=1})^{\nabla=0}
\supseteq M^{\nabla=0}$. 
By Lemma \ref{Lemma: up down}, if $M$ is a free 
$A$-module, the first inclusion of \eqref{eq :153} is an 
equality.

The claim for $V\in\Rep_K(\mathbb{Z})$ follows 
similarly.
\end{proof}

We now define regular singular modules.

\begin{definition}[Regular singular differential modules]\label{Def : reg sing diff mod}
Let $A$ be a differential $K$-algebra 
without exponents nor logarithm. 
If $M$ is a differential module over $A$ we say that $M$ 
is \emph{regular singular} if 
\begin{enumerate}
\item $M$ is finite free as $A$-module; 
\item $M$ is trivialized by $E_A$ (as a differential module).
\end{enumerate}
A morphism of regular singular modules is just a morphism of 
differential modules over $A$.
We denote by 
\begin{equation}
\Reg(A)
\end{equation} the full sub-category of 
$\partial-\Mod(A)$ formed by regular singular modules. 
As in Notation \ref{Notation : mon_rho}, when necessary 
we may explicit the dependence on the choices
\begin{equation}\label{eq: Reg_rho}
\Reg(A)=
\Reg_{\rho}(A)=
\Reg_{\rho,\partial}(A)=
\Reg_{\rho,\partial,\gamma}(A)\;.
\end{equation}
\end{definition}
\begin{lemma}\label{Lemma : ext of reg is reg}
Let $A$ be a differential $K$-algebra 
without exponents nor logarithm and let $M,N\in\Reg(A)$. 
Then, the differential module of internal hom 
$\Hom_A(M,N)$ and internal tens $M\otimes_A N$ 
are regular singular. 
In particular, the dual of a regular singular module is regular singular
\end{lemma}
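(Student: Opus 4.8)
The plan is to reduce the statement to three facts: (i) finite freeness over $A$ is preserved by $\otimes_A$, by internal $\Hom_A$, and by dualization; (ii) the functor $(-)\otimes_A E_A$ commutes, horizontally, with $\otimes$, internal $\Hom$ and dual; and (iii) trivial differential modules over $(E_A,\partial)$ are closed under $\otimes_{E_A}$, internal $\Hom_{E_A}$, and dualization. Granting these, if $M$ and $N$ are finite free and trivialized by $E_A$, then $M\otimes_A N$, $\Hom_A(M,N)$ and $M^*$ are finite free, and applying $(-)\otimes_A E_A$ turns them into trivial $E_A$-modules, so they lie in $\Reg(A)$.

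First I would handle $M\otimes_A N$. If $M$ and $N$ are finite free of ranks $m$ and $n$, then $M\otimes_A N$ is finite free of rank $mn$. There is a canonical $E_A$-linear isomorphism $(M\otimes_A N)\otimes_A E_A \simto (M\otimes_A E_A)\otimes_{E_A}(N\otimes_A E_A)$, and a short Leibniz-rule computation shows it is horizontal for the connections $\nabla\otimes 1+1\otimes\partial$ on both sides. Since $M\otimes_A E_A$ and $N\otimes_A E_A$ are trivial, i.e. free as differential modules over $(E_A,\partial)$, their tensor product over $E_A$ is again free over $(E_A,\partial)$ --- this comes down to $E_A\otimes_{E_A} E_A = E_A$ with connection $\partial$. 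Hence $M\otimes_A N$ is trivialized by $E_A$, so $M\otimes_A N\in\Reg(A)$.

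Next I would treat the dual. For $M$ finite free over $A$, $M^*$ is finite free, and there is a canonical horizontal isomorphism $M^*\otimes_A E_A\simto (M\otimes_A E_A)^*$, the connection on a dual being $\nabla(\phi)=\partial\circ\phi-\phi\circ\nabla$. As $M\otimes_A E_A$ is a free differential module over $(E_A,\partial)$, so is its dual (the dual of $(E_A)^m$ is $(E_A)^m$ with componentwise connection). Thus $M^*\in\Reg(A)$. Then, since $M$ is finite free, there is a horizontal isomorphism $\Hom_A(M,N)\cong M^*\otimes_A N$; with $M^*,N\in\Reg(A)$, the tensor case already proved gives $\Hom_A(M,N)\in\Reg(A)$. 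The final assertion follows by taking $N=\mathbf{1}=(A,\partial)$, which lies in $\Reg(A)$ because $A\otimes_A E_A=E_A$ is trivial.

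The only step requiring genuine care --- and hence the main obstacle --- is verifying that the three canonical $E_A$-linear isomorphisms invoked above (base change of a tensor product, the identification $\Hom_A(M,N)\cong M^*\otimes_A N$ as differential modules, and base change of a dual) commute with the relevant connections. These are routine computations with the connection formulas recalled in Section \ref{Section : Diff mod}, but they must be carried out explicitly; everything else is formal bookkeeping with the two defining conditions of $\Reg(A)$.
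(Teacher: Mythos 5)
Your proposal is correct and follows essentially the same strategy as the paper: finite freeness is preserved by the linear-algebra operations, base change to $E_A$ commutes with them, and trivial differential modules over $E_A$ are closed under them. The only organizational difference is that the paper treats internal $\Hom$ directly, using the canonical horizontal isomorphism $\Hom_A(M,N)\otimes_AE_A\cong\Hom_{E_A}(M\otimes_AE_A,N\otimes_AE_A)$ (valid because $M$ is finite free) and then gets the dual as the case $N=A$, whereas you prove the tensor and dual cases first and deduce the $\Hom$ case from the horizontal identification $\Hom_A(M,N)\cong M^*\otimes_AN$; both reductions are legitimate for finite free $M$, and the horizontality checks you defer are exactly the routine verifications the paper also leaves implicit.
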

\begin{proof}
By assumptions we have $E_A$-linear isomorphisms
$M\otimes_AE_A\simto E_A^m$ and $N\otimes_AE_A\simto E_A^n$ 
commuting with the connections, where $E_A=\bs{1}$ denotes the unit 
object. Since $M$ is finite free over $A$, by \cite[\S5, N.3, Proposition 7]{Bou-Alg-II} we have isomorphisms 
$\Hom_A(M,N)\otimes_AE_A
\cong
\Hom_{E_A}(M\otimes_AE_A,N\otimes_AE_A)
\cong
\Hom_{E_A}(E_A^m,E_A^n)
\cong
\Hom_{E_A}(E_A,E_A)^{nm}
\cong E^{nm}_A$. 
It is not hard to show that these isomorphisms commute with the 
connections, therefore $\Hom_A(M,N)$ is trivialized by $E_A$, 
i.e. it is regular singular.

A similar argument shows that 
$
M\otimes_AN\otimes_A E_A
\cong
(M\otimes_AE_A)\otimes_{E_A}(N\otimes_AE_A)
\cong
E_A^m\otimes_{E_A}E_A^n
\cong 
E_A^{mn}
$.
\end{proof}

\begin{proposition}\label{Prop : Reg stable by extensions}
Let $A$ be a differential $K$-algebra 
without exponents nor logarithm. 
Assume moreover that $\partial:E_A\to E_A$ is surjective. 
Then the category of regular singular differential modules is stable by 
Yoneda extensions. This is the case in particular for $A=K[t,t^{-1}]$ (cf. Proposition \ref{Prop : Ktt-1 partial epi}).
\end{proposition}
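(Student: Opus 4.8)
The plan is to take an arbitrary short exact sequence of differential modules $0 \to N \to P \to M \to 0$ with $M, N \in \Reg(A)$, and to show that $P$ is again regular singular: it is finite free as an $A$-module (this is automatic, since it is an extension of the finite free module $M$ by the finite free module $N$, hence finite projective, and — choosing a splitting of the underlying $A$-module sequence — in fact finite free of rank $\mathrm{rk}(M)+\mathrm{rk}(N)$), so the only real content is that $P$ is trivialized by $E_A$. First I would pick $A$-bases of $N$ and $M$, lift the basis of $M$ into $P$ as in the discussion around \eqref{eq: exact sequence}, and write the connection matrix of $P$ in block upper triangular form $G_P = \left(\begin{smallmatrix} G_N & * \\ 0 & G_M \end{smallmatrix}\right)$. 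Since $N$ and $M$ are trivialized by $E_A$, after extending scalars to $E_A$ we may change bases (by matrices in $GL(E_A)$ of block diagonal form) so that $G_N$ and $G_M$ both become the zero matrix; the base change rule \eqref{eq : base change rule} keeps the block upper triangular shape, so $P \otimes_A E_A$ has connection matrix $\left(\begin{smallmatrix} 0 & B \\ 0 & 0 \end{smallmatrix}\right)$ for some matrix $B$ with entries in $E_A$.

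Now the extension $P \otimes_A E_A$ over the differential ring $E_A$ is classified, via \eqref{eq : yoneda ext}, by a class in $\H^1$ of a trivial differential module over $E_A$, i.e. by the cokernel of $\partial$ acting componentwise on a free $E_A$-module $E_A^{nm}$. The hypothesis that $\partial : E_A \to E_A$ is surjective says precisely that this cokernel vanishes, so the extension splits over $E_A$; concretely, one solves $\partial(X) = -B$ entrywise for a matrix $X$ over $E_A$ (possible by surjectivity of $\partial$), and then the base change by $\left(\begin{smallmatrix} \mathrm{Id} & X \\ 0 & \mathrm{Id} \end{smallmatrix}\right)$ turns the connection matrix into $0$ by \eqref{eq : base change rule}. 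Hence $P \otimes_A E_A$ is trivial as a differential module over $E_A$, which is exactly the statement that $P$ is trivialized by $E_A$; together with finite freeness this gives $P \in \Reg(A)$. The final sentence, that this applies to $A = K[t,t^{-1}]$, is immediate from Proposition \ref{Prop : Ktt-1 partial epi}, which asserts that $\partial$ is surjective on $E_{K[t,t^{-1}]}$.

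The main obstacle I expect is purely bookkeeping: making sure the base changes used to trivialize the diagonal blocks $G_N$ and $G_M$ can be taken block diagonal (so as not to destroy the upper triangular shape), and that the resulting matrix equation $\partial(X) = -B$ is genuinely a componentwise equation over $E_A$ to which surjectivity of $\partial$ applies — there is a small subtlety in that $\partial$ acting on a free $E_A$-module of matrices is surjective iff $\partial : E_A \to E_A$ is, which is fine, but one should state it cleanly. Alternatively, and perhaps more cleanly, one can phrase the whole argument functorially: $\Ext$ in $\partial\text{-}\Mod(A)$ of $M$ by $N$ injects (after trivializing, since $\Hom_A(M,N)$ is regular singular by Lemma \ref{Lemma : ext of reg is reg}) into $\Ext$ over $E_A$ of the corresponding trivial modules, which by \eqref{eq : yoneda ext} is $\H^1$ of a trivial module over $E_A$, and this is zero by the surjectivity hypothesis; so every extension of $M$ by $N$ becomes split after $\otimes_A E_A$, i.e. is trivialized by $E_A$. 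I would present the matrix version as the main line since it is the most elementary and self-contained.
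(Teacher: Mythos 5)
Your proposal is correct and follows essentially the same route as the paper: the paper tensors the extension with $E_A$ and observes that $\Ext(E_A^n,E_A^m)=\H^1(E_A)^{nm}=0$ by surjectivity of $\partial$ (via \eqref{eq : yoneda ext}), which is exactly your ``alternative'' functorial phrasing, while your main matrix argument (solving $\partial(X)=-B$ after block-diagonal trivialization) is just an explicit unwinding of the same vanishing. No gaps; the finite-freeness of the middle term, which the paper leaves implicit, is handled correctly in your write-up.
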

\begin{proof}
Let $0\to M\to Q\to N \to 0$ be an exact sequence where 
$M,N\in \Reg(A)$. 
Since $N$ is free as $A$-module, it is flat and the sequence 
$0\to M\otimes_AE_A\to Q\otimes_AE_A\to N\otimes_AE_A\to 0$ is exact. 
Therefore, $Q\otimes_AE_A$ is an extension of $E_A^n$ by $E_A^m$ in 
the category of differential modules over $E_A$. By \eqref{eq : yoneda ext} 
we have $Ext(E_A^n,E_A^m)=H^1(E_A^{nm})=H^1(E_A)^{nm}$. Now, 
since the derivation $\partial$ is surjective on $E_A$ we have $H^1(E_A)=0$ (cf. \eqref{eq : yoneda ext}). 
It follows that $Q\otimes_AE_A$ is necessarily isomorphic, 
as differential module over $E_A$, to the direct sum of 
$M\otimes_AE_A$ and $N\otimes_AE_A$.  The claim follows.
\end{proof}

\begin{theorem}\label{Thm. Equivalence}
Let $A$ be a differential $K$-algebra 
without exponents nor logarithm.
The restriction of the functor $\Mon$ to the category 
$\Reg(A)$ gives an equivalence of categories
\begin{equation}
\Mon\;:\;\Reg(A)\;\xrightarrow[]{\;\;\cong\;\;}\;
\Rep_K^{\mathrm{fin}}(\mathbb{Z})\;
\end{equation}
with quasi inverse $\Rm$.
Both $\Mon$ and $\Rm$ are exact functors on these 
categories and they preserve the dimension of the objects as well as
the operations of internal $\Hom$, 
$\otimes$, and duality. 
\end{theorem}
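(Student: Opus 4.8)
The plan is to establish, in order: (i) that $\Mon$ carries $\Reg(A)$ into $\Rep_K^{\mathrm{fin}}(\mathbb{Z})$ and $\Rm$ carries $\Rep_K^{\mathrm{fin}}(\mathbb{Z})$ into $\Reg(A)$, both preserving ranks/dimensions; (ii) natural isomorphisms $\Rm\circ\Mon\cong\mathrm{id}$ and $\Mon\circ\Rm\cong\mathrm{id}$; (iii) exactness of both functors; (iv) compatibility with internal $\Hom$, $\otimes$ and duality. First I would check (i). If $M\in\Reg(A)$ has rank $n$, then $M\otimes_AE_A\cong E_A^n$ as differential modules over $E_A$, so $\Mon(M)=(M\otimes_AE_A)^{\nabla=0}\cong(E_A^{\partial=0})^n=K^n$ by Proposition \ref{Prop : kernel of partial}; hence $\Mon(M)$ is finite-dimensional, of dimension $\mathrm{rk}_A M$. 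Conversely, given $V\in\Rep_K^{\mathrm{fin}}(\mathbb{Z})$ of dimension $n$, Proposition \ref{Lemma : every V is trivialized} says $V\otimes_KE_A$ is a trivial $\sigma$-module over $E_A$, so the trivial-object formula \eqref{eq: trud} for $G=\mathbb{Z}$ acting on $E_A$ (with $E_A^{\mathbb{Z}}=E_A^{d_\sigma=0}=A$ by Proposition \ref{Prop : EAdsigma=0 A}) produces a canonical isomorphism $\Rm(V)\otimes_AE_A\simto V\otimes_KE_A$ of $\sigma$-modules over $E_A$, which one checks is also horizontal for the connection $1\otimes\partial$. Since a basis of $V$ consists of horizontal vectors, $V\otimes_KE_A$ is a trivial differential module over $E_A$; therefore $\Rm(V)$ is finite free of rank $n$ over $A$ and is trivialized by $E_A$, i.e. $\Rm(V)\in\Reg(A)$.

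Next, for (ii): when $M\in\Reg(A)$, $M\otimes_AE_A$ is a trivial differential module over $E_A$, so \eqref{eq : trivial downup} together with $E_A^{\partial=0}=K$ yields a canonical isomorphism of differential modules over $E_A$, $\Mon(M)\otimes_KE_A\simto M\otimes_AE_A$, which intertwines the action of $[1]$ on the left (diagonal, $1\otimes\sigma$ on both tensor factors) with $1\otimes\sigma$ on the right, because $1\otimes\sigma$ is a ring morphism for the $E_A$-module structure. Taking $[1]=1$-invariants and applying Lemma \ref{Lemma: up down} gives the natural isomorphism $\Rm(\Mon(M))=(\Mon(M)\otimes_KE_A)^{1\otimes\sigma=1}\cong(M\otimes_AE_A)^{1\otimes\sigma=1}=M$. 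Symmetrically, for $V\in\Rep_K^{\mathrm{fin}}(\mathbb{Z})$ the isomorphism $\Rm(V)\otimes_AE_A\cong V\otimes_KE_A$ of (i) is compatible with $\nabla$ and with the $\mathbb{Z}$-action, so passing to $\nabla=0$-parts and using Lemma \ref{Lemma: up down} gives $\Mon(\Rm(V))=(\Rm(V)\otimes_AE_A)^{\nabla=0}\cong(V\otimes_KE_A)^{1\otimes\partial=0}=V$, naturally. Thus $\Mon$ and $\Rm$ are mutually quasi-inverse equivalences.

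For (iii) and (iv): both functors are left exact since $E_A$ is free over $A$ (resp. over $K$), so $-\otimes_AE_A$ and $-\otimes_KE_A$ are exact while $(-)^{\nabla=0}$ and $(-)^{d_\sigma=0}$ are left exact. Moreover $\Rm$ is right exact, hence exact: for $0\to V_1\to V_2\to V_3\to0$ the cokernel of $\Rm(V_2)\to\Rm(V_3)$ embeds into $\mathrm{coker}(d_\sigma\colon V_1\otimes_KE_A\to V_1\otimes_KE_A)$, which vanishes because $V_1\otimes_KE_A\cong E_A^{\dim V_1}$ and $d_\sigma$ is surjective on $E_A$ (Proposition \ref{Prop : EAdsigma=0 A}). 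Then $\Mon$ is exact as well: if $0\to M_1\to M_2\xrightarrow{f}M_3\to0$ is exact with all terms in $\Reg(A)$, left exactness gives exactness of $0\to\Mon M_1\to\Mon M_2\xrightarrow{\Mon f}\Mon M_3$, and applying the right-exact functor $\Rm$ to $\Mon M_2\to\Mon M_3\to C\to 0$ with $C:=\mathrm{coker}(\Mon f)$ identifies, via $\Rm\circ\Mon\cong\mathrm{id}$, the sequence $M_2\xrightarrow{f}M_3\to\Rm(C)\to0$; since $f$ is surjective, $\Rm(C)=0$, whence $C=0$ as $\Rm$ is an equivalence. Dimension preservation is (i). For (iv), by Lemma \ref{Lemma : ext of reg is reg} the modules $\Hom_A(M,N)$, $M\otimes_AN$, $M^*$ are regular singular when $M,N$ are, so the canonical morphisms of Lemma \ref{Lemma : Can morphisms M} make sense; base-changing along the faithfully flat inclusion $K\hookrightarrow E_A$ (every $E_A$-module is free over the field $K$), using $\Mon(P)\otimes_KE_A\cong P\otimes_AE_A$ from (ii), the standard identification $\Hom_A(M,N)\otimes_AE_A\cong\Hom_{E_A}(M\otimes_AE_A,N\otimes_AE_A)$ for $M$ finite free \cite[\S5, N.3, Proposition 7]{Bou-Alg-II}, and its $K$-analogue for $\Hom_K(\Mon M,\Mon N)$, one checks the base-changed morphism becomes the canonical isomorphism; faithful flatness then gives that the original morphism is an isomorphism. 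The tensor case is identical, and duality follows by taking $N=\mathbf{1}$ and $\Mon(\mathbf{1})=(E_A)^{\partial=0}=K=\mathbf{1}$.

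The step I expect to be the main obstacle is the exactness of $\Mon$ in (iii): since $\partial$ need not be surjective on $E_A$ for a general $A$ — indeed Proposition \ref{Prop : Ktt-1 partial epi} isolates $A=K[t,t^{-1}]$ as a case where it is — one cannot read exactness off a vanishing-$\H^1$ statement, and must instead route through the already-proved exactness of $\Rm$ together with the quasi-inverse relation. A pervasive secondary point of care, throughout (i)–(ii) and (iv), is to verify that the canonical isomorphisms supplied by the trivial-object formulas \eqref{eq : trivial downup} and \eqref{eq: trud} and by Lemma \ref{Lemma : Can morphisms M} are simultaneously compatible with the connections and with the $\mathbb{Z}$-actions, so that passing to the relevant kernels of $\nabla$ and of $d_\sigma$ is legitimate.
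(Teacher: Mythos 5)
Your proposal is correct and follows essentially the same route as the paper: the core steps — the isomorphisms $\Mon(M)\otimes_KE_A\cong M\otimes_AE_A$ and $\Rm(V)\otimes_AE_A\cong V\otimes_KE_A$ obtained from the trivial-object formulas together with Lemma \ref{Lemma: up down} and Proposition \ref{Lemma : every V is trivialized}, and the comparison of $\Mon(\Hom_A(M,N))$ with $\Hom_K(\Mon(M),\Mon(N))$ via the finite-free base-change isomorphisms — are exactly the paper's argument. The only differences are organizational: you deduce full faithfulness formally from the naturality of the quasi-inverse isomorphisms instead of the paper's explicit internal-$\Hom$ computation (which you still carry out for the monoidal compatibilities, packaged through faithfully flat descent along $K\hookrightarrow E_A$), and you spell out, via surjectivity of $d_\sigma$ on $E_A$ and the snake lemma, the exactness claim that the paper leaves as routine.
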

\begin{proof}
Let $M\in\Reg(A)$. By definition, this means that
$M\otimes_AE_A$ has a trivial action of $\nabla$. 
By definition, also the action of $[1]$ is trivial, because it is given by 
$1\otimes \sigma$. However, in general there is no basis of $M\otimes_{A}E_A$ trivializing simultaneously $\nabla$ and $[1]$. 
The fact that both the actions are trivial, implies that we have canonical 
isomorphisms (cf. \eqref{eq : trivial downup} and \eqref{eq: trud})
$(M\otimes_AE_A)^{\nabla=1}\otimes_KE_A\simto M\otimes_AE_A$
and 
$(M\otimes_AE_A)^{1\otimes\sigma=1}
\otimes_AE_A\simto M\otimes_AE_A$ (cf. Lemma \ref{Lemma: up down})
commuting with the connections and the actions of the monodromy 
operators.

In particular, we have an $E_A$-linear isomorphism 
\begin{eqnarray}\label{eq : Mon(M)oE=MoE}
\Mon(M)\otimes_KE_A&\cong&M\otimes_AE_A\;.
\end{eqnarray}
This shows that $M$ and $\Mon(M)$ share the same dimension.
By Lemma \ref{Lemma: up down} it follows that
\begin{equation}
\Rm(\Mon(M))\;=\;(\Mon(M)\otimes_KE_A)^{[1]=1}\;
\cong\;(M\otimes_AE_A)^{1\otimes\sigma=1}=M\;.
\end{equation}
Let now $V\in\Rep_K^{\mathrm{fin}}(\mathbb{Z})$. 
By Proposition \ref{Lemma : every V is trivialized}, 
the action of the monodromy on $V\otimes_KE_A$ is trivial. 
On the other hand, by definition $V\otimes_KE_A$ 
has a trivial connection $1\otimes\partial$
and hence as before we have isomorphisms
$(V\otimes_KE_A)^{1\otimes\partial=0}
\otimes_KE_A\simto V\otimes_KE_A$ and 
$(V\otimes_KE_A)^{[1]=1}
\otimes_KE_A\simto V\otimes_KE_A$ 
(cf. Lemma \ref{Lemma: up down}).
 We then have an isomorphism commuting with the actions of the 
connections and the monodromies.
\begin{eqnarray}
\Rm(V)\otimes_AE_A&\cong&V\otimes_KE_A\;.
\end{eqnarray}
It follows that $V$ has the same dimension as $\Rm(V)$ and that
\begin{equation}
\Mon(\Rm(V))\;=\;(\Rm(V)\otimes_AE_A)^{\nabla=0}\;
\cong\;
(V\otimes_KE_A)^{1\otimes\partial=0}=V\;.
\end{equation}

Let us now check the fully faithfulness of the functors.
If $M,N\in\Reg(A)$, we know that
$\Hom_A(M,N)$ is regular singular. Therefore, by Lemma 
\ref{Lemma: Mon and R commutes to solutions} we have
\begin{equation}
\Hom_A^\nabla(M,N)\;=\;\Hom_A(M,N)^{\nabla=0}\;=\;
\Mon(\Hom_A(M,N))^{\mathbb{Z}}\;.
\end{equation}
It is then enough to show that we have an isomorphism 
\begin{equation}
\Mon(\Hom_A(M,N))\cong
\Hom_K(\Mon(M),\Mon(N))
\end{equation}
of $K$-linear representations of $\mathbb{Z}$. 
In other words, we want to prove that the morphism 
\eqref{eq:arrowonhom} is an isomorphism. 
This follows again from 
\eqref{eq : Mon(M)oE=MoE} because \eqref{eq:arrowonhom} is obtained as composition of the following morphisms
\begin{eqnarray}
\Hom_A(M,N)\otimes_AE_A&\;\cong\;&
\Hom_{E_A}(M\otimes_AE_A,N\otimes_AE_A)\nonumber\\
&\;\cong\;&
\Hom_{E_A}(\Mon(M)\otimes_KE_A,
\Mon(N)\otimes_KE_A)\\
&\;\cong\;&
\Hom_{K}(\Mon(M),\Mon(N))\;.
\end{eqnarray}
Where the first isomorphism follows from the fact that $M$ is finite free as 
$A$ module \cite[\S5, N.3, Proposition 7]{Bou-Alg-II}; the second from 
\eqref{eq : Mon(M)oE=MoE};  and the third one again from 
\cite[\S5, N.3, Proposition 7]{Bou-Alg-II} and the fact that $\Mon(M)$ 
is finite dimensional.

The rest of the proof is routine. 
\end{proof}

\begin{corollary}\label{Cor : descent to ktt-1}
Let $A$ be a differential $K$-algebra 
without exponents nor logarithm and $M$ a
differential module over $A$ which is finite free as 
$A$-module. The following are equivalent:
\begin{enumerate}
\item\label{Cor : descent to ktt-1-1} $M$ is regular singular;
\item\label{Cor : descent to ktt-1-2} 
$M$ is finite free over $A$ and it has a basis in 
which the matrix of the connection has coefficients in 
$K$ (cf. Example \ref{Example : Jordan}).
\end{enumerate}
In particular, the scalar extension functor 
\begin{equation}\label{eq : sc descent to Gm}
-\otimes_{K[t,t^{-1}]}A\;:\;
\Reg(K[t,t^{-1}])\;
\xrightarrow{\;\;\sim\;\;}\;\Reg(A)
\end{equation}
is an equivalence of categories.
\end{corollary}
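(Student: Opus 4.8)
The plan is to prove Corollary \ref{Cor : descent to ktt-1} by combining Example \ref{Example : Jordan} with Theorem \ref{Thm. Equivalence}, reducing everything to statements about $\sigma$-modules over $K$. First I would establish the equivalence \eqref{Cor : descent to ktt-1-1} $\Leftrightarrow$ \eqref{Cor : descent to ktt-1-2}. For the implication \eqref{Cor : descent to ktt-1-2} $\Rightarrow$ \eqref{Cor : descent to ktt-1-1}: if $M$ has a basis in which the connection matrix $G$ lies in $M_n(K)$, then by Example \ref{Example : Jordan} (via triangularization over $K$ and repeated use of \eqref{eq: exact sequence}) $M$ is an iterated extension of modules $N(a_i)$ with $a_i\in K$; I may even reduce $a_i$ to $\widetilde{K/\mathbb{Z}}$ up to twisting by an integer power of $t$, which does not affect triviality over $E_A$. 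By Example \ref{Exemple: Vlambda N(a)} we have $\Mon(N(a))=V_{\gamma(-a)}$, so each $N(a)$ is trivialized by $E_A$ (explicitly, $b\otimes t^{-a}$ and the binomials in $\ell$ give a horizontal basis after the two base changes in the proof of Proposition \ref{Lemma : every V is trivialized}). To pass from the one-dimensional pieces to $M$ itself, I would argue that being trivialized by $E_A$ is closed under extensions \emph{in this specific situation}: $M\otimes_A E_A$ is an iterated extension of trivial differential modules over $E_A$, and one checks that $\H^1(E_A)=0$ is not needed here because the \emph{monodromy} action is automatically trivial on $M\otimes_A E_A$ (it is $1\otimes\sigma$ and $\sigma$ fixes nothing new beyond... ) — more carefully, I would instead observe directly that a connection matrix $G\in M_n(K)$ over $E_A$ can be trivialized: combining the solution $t^{-G}$ (interpreted through the Jordan form, with $t^a$ handling the semisimple part and polynomials in $\ell$ handling the nilpotent part exactly as in the proof of Proposition \ref{Lemma : every V is trivialized}) gives an explicit horizontal basis of $(M\otimes_A E_A)$. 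Hence $M$ is regular singular.

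For the converse \eqref{Cor : descent to ktt-1-1} $\Rightarrow$ \eqref{Cor : descent to ktt-1-2}: suppose $M$ is regular singular. By Theorem \ref{Thm. Equivalence}, $M\cong \Rm(\Mon(M))$ with $V:=\Mon(M)\in\Rep_K^{\mathrm{fin}}(\mathbb{Z})$. It therefore suffices to show that $\Rm(V)$ admits a basis with connection matrix in $M_n(K)$. Using the Jordan decomposition of the monodromy operator $\sigma$ on $V$ (Section \ref{Section: matrix of 1}), $V$ is a direct sum of blocks $J(\lambda,k)$, and for each such block the first part of the proof of Proposition \ref{Lemma : every V is trivialized} produces a basis of $V\otimes_K E_A$ over which the monodromy is trivial and whose coefficients are products of $t^a$ (with $\gamma(a)=\lambda^{-1}$) and elements of $K[\ell]$; taking $\mathbb{Z}$-invariants recovers $\Rm(V)$ with an $A$-basis whose connection matrix is exactly the constant matrix $a\cdot\mathrm{Id}+N_k$ (the "$\log$ of the Jordan block"), which lies in $M_k(K)$. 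Assembling the blocks gives the required basis, so $M$ satisfies \eqref{Cor : descent to ktt-1-2}.

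Finally, for the equivalence \eqref{eq : sc descent to Gm}: the scalar extension functor $-\otimes_{K[t,t^{-1}]}A$ sends $\Reg(K[t,t^{-1}])$ into $\Reg(A)$ because triviality over $E_{K[t,t^{-1}]}$ implies triviality over $E_A=E_{K[t,t^{-1}]}\otimes_{K[t,t^{-1}]}A$ (base change of a trivial differential module is trivial; finite freeness is preserved). For each of $B=K[t,t^{-1}]$ and $B=A$ we have by Theorem \ref{Thm. Equivalence} an equivalence $\Mon_B:\Reg(B)\xrightarrow{\cong}\Rep_K^{\mathrm{fin}}(\mathbb{Z})$, and I would check that these are compatible with scalar extension, i.e. $\Mon_A(M\otimes_{K[t,t^{-1}]}A)\cong\Mon_{K[t,t^{-1}]}(M)$ functorially — this follows because $(M\otimes_{K[t,t^{-1}]}A)\otimes_A E_A\cong M\otimes_{K[t,t^{-1}]}E_{K[t,t^{-1}]}\otimes_{E_{K[t,t^{-1}]}}E_A$ and taking $\nabla$-horizontal sections commutes with the flat base change $E_{K[t,t^{-1}]}\to E_A$ on trivial modules. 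Two functors into an equivalence that become naturally isomorphic after composing with that equivalence are themselves naturally isomorphic; hence $-\otimes_{K[t,t^{-1}]}A$ is an equivalence, with quasi-inverse $\Rm_A$ followed by $-\otimes_{K[t,t^{-1}]}$-independent data, concretely $M\mapsto \Rm_{K[t,t^{-1}]}(\Mon_A(M))$.

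The main obstacle I anticipate is the implication \eqref{Cor : descent to ktt-1-2} $\Rightarrow$ \eqref{Cor : descent to ktt-1-1}, specifically handling the off-diagonal (non-semisimple) part of a constant connection matrix: one must produce an explicit horizontal basis for a single Jordan block $N$-shaped connection matrix over $E_A$, which requires the surjectivity of $d_\sigma$ on $K[\ell]$ and the interplay between $t^a$ and the binomials $\binom{\ell}{k}$ exactly as orchestrated in Proposition \ref{Lemma : every V is trivialized} — I would lean on that proof rather than redo it, noting that the "first base change" there already accomplishes precisely the trivialization of a constant-matrix differential module over $E_A$. The remaining verifications (functoriality, compatibility of the fiber functors with base change, preservation of $\Hom$/$\otimes$/duality) are routine given Lemma \ref{Lemma : Can morphisms M} and Theorem \ref{Thm. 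Equivalence}.
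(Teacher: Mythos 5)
Your proposal is correct, but it follows a noticeably different route from the paper's. The paper first establishes the equivalence \eqref{eq : sc descent to Gm} (via compatibility of $\Mon$ with $-\otimes_{K[t,t^{-1}]}A$, as you also do), and then uses it to \emph{reduce} the equivalence of \eqref{Cor : descent to ktt-1-1} and \eqref{Cor : descent to ktt-1-2} to the case $A=K[t,t^{-1}]$: there, \eqref{Cor : descent to ktt-1-2}$\Rightarrow$\eqref{Cor : descent to ktt-1-1} is obtained from Example \ref{Example : Jordan} together with stability under Yoneda extensions (Proposition \ref{Prop : Reg stable by extensions}, which needs surjectivity of $\partial$ on $E_{K[t,t^{-1}]}$, Proposition \ref{Prop : Ktt-1 partial epi}), and \eqref{Cor : descent to ktt-1-1}$\Rightarrow$\eqref{Cor : descent to ktt-1-2} from Jordan--H\"older of $\Mon(M)$ plus a computation of $\H^1(N(a-b))$ over $K[t,t^{-1}]$. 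You instead prove both implications \emph{directly over a general $A$}: for \eqref{Cor : descent to ktt-1-2}$\Rightarrow$\eqref{Cor : descent to ktt-1-1} you exhibit an explicit horizontal basis of $M\otimes_AE_A$ of the shape $t^{-D}\exp(-N\ell)$ (semisimple part via $t^{-a}$, nilpotent part via polynomials in $\ell$, using $\partial(\ell)=1$ and $\operatorname{char}K=0$), and for \eqref{Cor : descent to ktt-1-1}$\Rightarrow$\eqref{Cor : descent to ktt-1-2} you compute $\Rm(\Mon(M))$ blockwise using the basis built in Proposition \ref{Lemma : every V is trivialized} and $E_A^{\sigma=1}=A$ (essentially proving Remark \ref{Remark : matrix Jordan} directly, the constancy of the matrix following from $K[\ell]\cap A=K$). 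Two remarks: your instinct to abandon the ``closed under extensions in this situation'' argument was the right one --- over a general $A$ that claim is precisely what can fail (cf. Remark \ref{Rk : extensions of regular, possibly not regular}), and the paper only invokes it after reducing to $K[t,t^{-1}]$; and the ``first base change'' of Proposition \ref{Lemma : every V is trivialized} trivializes the monodromy, not the connection, so it is only the analogue of, not a substitute for, your $t^{-G}$ construction, which however does work as you describe. What each approach buys: yours is more self-contained and avoids the reduction step and the $\H^1$ computation; the paper's reduction lets it reuse its extension machinery over $K[t,t^{-1}]$ and keeps the explicit Jordan-block bookkeeping out of the proof.
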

\begin{proof}
The equivalence \eqref{eq : sc descent to Gm} follows 
from the fact that $-\otimes_{K[t,t^{-1}]}A$ commutes 
with the functors $\Mon$ on $\Reg(A)$ and 
$\Reg(K[t,t^{-1}])$ and the fact that they induces equivalence with the 
same category $\Rep^{\textrm{fin}}_K(\mathbb{Z})$.

Therefore, in order to prove that 
\eqref{Cor : descent to ktt-1-1} and
\eqref{Cor : descent to ktt-1-2} are equivalent, 
it is not restrictive to assume $A=K[t,t^{-1}]$.

If a differential module $M$ over $K[t,t^{-1}]$ has a basis in which the 
matrix of the connection has coefficients in $K$, 
then by Example \ref{Example : Jordan} $M$ 
is extension of modules of type $N(a)$ and it is hence regular singular by 
Proposition \ref{Prop : Reg stable by extensions}.  

Let us now assume that $M\in\Reg(K[t,t^{-1}])$. By Jordan classification of 
endomorphisms (cf. \eqref{eq : J(lambda,n)}), $\Mon(M)$ is extension of 
rank one objects of the form $V_\lambda$ (cf. \eqref{eq : Vlambda}), and 
by Theorem \ref{Thm. Equivalence} it follows that $M$ is extension of rank 
one differential modules of the form $N(a)$ (cf. Example 
\ref{Exemple: Vlambda N(a)}).
Yoneda extensions of $N(a)$ by $N(b)$ are controlled by $H^1(N(a-b))$ 
(cf. \eqref{eq : yoneda ext}) and a straightforward 
computation proves that $M$ always have a basis in which 
the matrix of the connection has coefficients in $K$ (and 
it is in Jordan form). 
\if{
Now, objects in $\Rep_K^{\mathrm{fin}}(\mathbb{Z})$ 
have a Jordan-Holder sequence formed by 
one dimensional sub-quotients (cf. section 
\ref{Section: matrix of 1}). 
For $\lambda\in K^\times$ let us denote by 
$V_\lambda$ the one dimensional 
$\sigma$-module over $K$ where $\sigma$ is just 
the multiplication by the scalar $\lambda$. Every one 
dimensional $\sigma$-modules over $K$ has this form.
The rule \eqref{eq : MonNa=Vlambda} provides the value 
of the functor on rank one objects:
\begin{equation}
\Rm(V_\lambda)\;=\;N(a)\;,
\qquad\gamma(a)=\lambda^{-1}\;.
\end{equation}
On the other hand, the Yoneda extension groups are (cf. Sections \ref{} and \ref{})
\begin{eqnarray}
Ext(V_\lambda,
\V_\mu)
&\;=\;&
\left\{
\sm{0&\textrm{ if }&\mu\neq\lambda\\
K&\textrm{ if }&\mu=\lambda\\}\right.\\
Ext(N(a),N(b))
&\;=\;&
\left\{
\sm{0&\textrm{ if }&a\neq b\\
K&\textrm{ if }&a=b\\}\right.
\end{eqnarray}
The exactness of the functors $\Rm$ and $\Mon$ imply 
that non trivial extensions should correspond by these 
functors. Moreover, \emph{over the ring $K[t,t^{-1}]$}, 
every differential module having a Jordan-Holder 
sequence made by rank one sub-quotients all 
isomorphic to modules of type $N(a)$ is regular singular 
(cf. Proposition \ref{Prop : Reg stable by extensions}).\footnote{This fails for general ring $A$ (cf. Remark \ref{Rk : extensions of regular, possibly not regular}).}
Now, Yoneda extensions of $N(a)$ by itself always have 
a basis in which the matrix of the connection has 
coefficients in $K$. 
Therefore, regular singular modules always have a basis in which 
the matrix of the connection has coefficients in $K$ (and 
it is in Jordan form). 
}\fi
\end{proof}
\begin{remark}\label{Remark : matrix Jordan}
Thanks to Corollary \ref{Cor : descent to ktt-1}, 
we can compute explicitly the functors as follows. 
Let $V\in\Rep_K^{\mathrm{fin}}(\mathbb{Z})$ be a 
$K$-linear representation $\mathbb{Z}$ such that the 
monodromy operator $[1]$ has matrix $J$ in Jordan 
canonical square blocks of the form $J(\lambda,n)$ 
(cf. \eqref{eq : J(lambda,n)}). 
Then $M:=\Rm(V)$ has a basis in which the connection 
has matrix $G\in M_n(K)$ that can be obtained from $J$ 
by replacing every Jordan block $J(\lambda,n)$ 
with the block $J(a,n)$, where $a=\gamma^{-1}
(\lambda^{-1})\in\widetilde{K/\mathbb{Z}}$. 

Reciprocally, if $M\in\Reg(A)$ has a basis in which the matrix $G$ 
of its connection has constant coefficients, 
then we may change the basis of $M$ by a 
base change whose matrix has coefficients in $K$ in 
order to turn $G$ into its Jordan normal form with blocks $J(a,n)$ (cf. 
Example \ref{Example : Jordan}). 
Then $V:=\Mon(M)$ has a matrix of 
$\sigma$ which is obtained by the Jordan form of $G$ 
replacing every block $J(a,n)$ with $J(\lambda,n)$.
\end{remark}
There may exists a base change of $M$, by a matrix 
which is \emph{not} in $GL_n(K)$, such that the 
matrix of the connection is constant both before and 
after the base change. In this case, Remark 
\ref{Remark : matrix Jordan} implies that the Jordan 
form of the matrix of the connection is the same (up to 
permutation of the Jordan blocks) 
before and after the base change. We then obtain the following Corollary.
\begin{corollary}
\label{Cor : independence of eigenvalues}
Let  $A$ be a differential $K$-algebra without exponents 
nor logarithms. 
Let $M\in \Reg(A)$ be a $n$-dimensional regular singular module. 
If $G$ is a matrix of the connection of $M$ with 
coefficients in $K$, and if $\{a_1,\ldots,a_n\}$ is the 
multiset of the eigenvalues of $G$ (i.e. eigenvalues with 
multiplicity), then the image of $\{a_1,\ldots,a_n\}$ in 
$K/\mathbb{Z}$ is a multiset which is 
invariant by base changes of  
$M$ (equivalently, it is invariant by isomorphisms).
\hfill$\Box$
\end{corollary}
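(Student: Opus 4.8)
The plan is to reduce the statement to Remark \ref{Remark : matrix Jordan} together with Corollary \ref{Cor : descent to ktt-1}. First I would recall that, by Corollary \ref{Cor : descent to ktt-1}, a regular singular module $M$ over $A$ always admits a basis in which the matrix $G$ of the connection has coefficients in $K$; the content of the statement is that, although such a basis is far from unique, the class in $K/\mathbb{Z}$ of the multiset of eigenvalues of $G$ does not depend on it. So suppose $G$ and $G'$ are two matrices of the connection of $M$, both with coefficients in $K$, related by a base change matrix $H\in GL_n(A)$ via \eqref{eq : base change rule}, i.e. $G'=\partial(H)H^{-1}+HGH^{-1}$.

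The key step is to apply the monodromy functor $\Mon$ and use that it depends only on the isomorphism class of $M$. By Remark \ref{Remark : matrix Jordan}, starting from a basis of $M$ in which the connection matrix is the constant matrix $G$, one may further change basis by a matrix in $GL_n(K)$ — which does not change the multiset of eigenvalues of $G$ in $K$, let alone in $K/\mathbb{Z}$ — to bring $G$ into Jordan normal form with blocks $J(a_i,n_i)$, $a_i\in\widetilde{K/\mathbb{Z}}$; then $V:=\Mon(M)$ has a monodromy operator $\sigma$ whose matrix is obtained by replacing each block $J(a_i,n_i)$ with $J(\lambda_i,n_i)$, where $\lambda_i=\gamma(-a_i)$. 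In particular the multiset $\{\lambda_1,\ldots,\lambda_n\}$ of eigenvalues of $\sigma$ (with multiplicity) is an invariant of $V$, hence of the isomorphism class of $M$. Doing the same with $G'$ yields eigenvalues $a_i'\in\widetilde{K/\mathbb{Z}}$ and $\lambda_i'=\gamma(-a_i')$, and since $\Mon(M)$ is the same object, $\{\lambda_i\}=\{\lambda_i'\}$ as multisets. Because $\gamma:\widetilde{K/\mathbb{Z}}\xrightarrow{\;\cong\;}K^\times$ is a bijection (cf. \eqref{eq : abuse gamma}) and the eigenvalues of $G$ in $K/\mathbb{Z}$ are precisely the classes of the $a_i$ (any two matrices over $K$ conjugate over $A$ but reduced to Jordan form over $K$ have eigenvalues differing only by the choice of representatives; more precisely eigenvalues of a matrix are intrinsic in $K$ and their images in $K/\mathbb{Z}$ agree with the $\overline{a_i}$), we conclude that the image of $\{a_1,\ldots,a_n\}$ in $K/\mathbb{Z}$ equals the image of $\{a_1',\ldots,a_n'\}$.

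The main obstacle is the bookkeeping at the start: one must be careful that the multiset of eigenvalues \emph{in $K$} of a constant connection matrix $G$ is genuinely intrinsic (it is, since similar matrices over any ring have the same characteristic polynomial when the entries are central, in particular over $K$), so that the only ambiguity in passing from $G$ to a Jordan form with blocks $J(a_i,n_i)$ is the choice of which representative $a_i\in\widetilde{K/\mathbb{Z}}$ of a given class in $K/\mathbb{Z}$ to use — and that choice is exactly washed out by reducing modulo $\mathbb{Z}$. Once this is granted, everything else is an immediate consequence of the functoriality and faithfulness of $\Mon$ established in Theorem \ref{Thm. Equivalence} and the explicit description in Remark \ref{Remark : matrix Jordan}; no further computation is needed.
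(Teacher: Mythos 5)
Your proof is correct and follows essentially the same route as the paper: the paragraph preceding the corollary (together with Remark \ref{Remark : matrix Jordan}) argues exactly as you do, namely that the Jordan form of a constant connection matrix determines, via $a\mapsto\gamma(-a)$, the Jordan form of the monodromy operator of $\Mon(M)$, which is an isomorphism invariant, and injectivity of $\overline{\gamma}$ on $K/\mathbb{Z}$ then recovers the eigenvalue classes modulo $\mathbb{Z}$. Your small slip in asserting that a $GL_n(K)$ base change yields blocks $J(a_i,n_i)$ with $a_i\in\widetilde{K/\mathbb{Z}}$ (it yields the actual eigenvalues of $G$ in $K$) is immaterial, as you note yourself, since $\gamma(-a)$ only depends on $a$ modulo $\mathbb{Z}$.
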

\begin{definition}[Exponents]
Let  $A$ be a differential $K$-algebra without exponents 
nor logarithms and $M\in\Reg(A)$ an $n$-dimensional regular singular module. With the notation as 
in Corollary \ref{Cor : independence of eigenvalues}, 
we call 
\begin{equation}
\Exp(M)\;=\;\{a_1,\ldots,a_n\}\subset K/\mathbb{Z}
\end{equation}
the multiset of the \emph{exponents of $M$}.
We call $K/\mathbb{Z}$ the \emph{group of 
exponents.}

As in Notation \ref{Notation : mon_rho}, when necessary 
we also use the notation
\begin{equation}\label{eq: Reg_rho}
\Exp(M)=
\Exp_{\rho}(M)=
\Exp_{\rho,\partial}(M)=
\Exp_{\rho,\partial,\gamma}(M)\;.
\end{equation}
\end{definition}
\begin{remark}
We point out the dependence on the choice of 
$\rho:K[t,t^{-1}]\to A$. 
For instance, when $A=K[t,t^{-1}]$, 
we may consider the map $\rho'\;:\;K[t,t^{-1}]\to 
K[t,t^{-1}]$ sending $t$ onto $t^{-1}$. 
This map is an isomorphism of rings, but not commuting with 
$\partial_t:=t\frac{d}{dt}$ on both sides. Instead we 
have
\begin{equation}
\partial_{t^{-1}}\circ\rho' = \rho'\circ\partial_{t}
\end{equation}
and 
\begin{equation}
\partial_{t^{-1}}=-\partial_t\;.
\end{equation}
If $M$ is a differential module over $(A,\partial_t)$ 
with matrix $G\in M_n(A)$ 
(cf. \eqref{eq : matrix of conn}), then 
we can consider it as an $(A,\partial_{t^{-1}})$ as per 
Proposition \ref{Prop: change of deriv} 
and its matrix becomes $-G$. 
It follows that $(M,\nabla)$ is regular over 
$(K[t,t^{-1}],\partial_t)$ if and only if so is
$(M,-\nabla)$ over 
$(K[t,t^{-1}],\partial_{t^{-1}})$. However
the exponents changes sign
\begin{equation}
\Exp_\rho((M,\nabla))\;=\;-
\Exp_{\rho'}((M,-\nabla))\;.
\end{equation}
A similar phenomena arises in $p$-adic framework when $A$ is the ring of 
analytic functions over an open annulus. In this case, a change of variable 
drastically transforms the matrix of the connection and the isomorphism 
classes of regular singular modules. In particular, the category of regular 
singular modules is not stable under change of coordinates. 
This makes improbable that a criterion involving the norms or the 
valuations of the coefficients my detect the fact 
that the module is regular singular. 
\end{remark}
\begin{remark}\label{Rk : extensions of regular, possibly not regular}
Let $A$ be a differential $K$-algebra 
without exponents nor logarithm.
We point out that $A$ 
is allowed to have non trivial ideals stable 
by the derivation. 
But these ideals are differential modules 
that are not regular singular. In other words, the unit object of 
$\partial-\Mod(A)$ may have non trivial sub-objects, but 
it does not have sub-objects as an object of $\Reg(A)$. 

On the other hand there are no restrictions on the dimension of
\begin{eqnarray*}
\mathrm{coker}(\partial:A\to A)\;=\;
\H^1(A)\;=\;
H^1(N(a)^*\otimes N(a))\;=\;
\Ext(N(a),N(a))\;.
\end{eqnarray*}
Therefore, we may have lot of Yoneda extensions of 
$N(a)$ by itself, but only a one dimensional sub-space 
of $\Ext(N(a),N(a))$ is formed by regular singular ones. 
In other words, we may have in $\partial-\Mod(A)$ some 
exact sequence $0\to N(a)\to M\to N(a)\to 0$ where $M$ 
is not regular singular. 

A similar phenomena shows up in the category of differential modules 
over a $p$-adic annulus. In that case it is known that, when $a-b$ is a 
$p$-adic non Liouville number, $\Ext(N(a),N(b))\cong H^1(a-b)$ 
is infinite dimensional (cf. \cite[5.5]{Ro-IV}). 
This tells us that some if these extensions are not trivialized by $E_A$ and 
hence have some solution which is not polynomial in $t^a$ and $\log(t)$.
\end{remark}

\begin{remark}
When $A=K\f{t}$, it is possible to prove that 
Katz's canonical extension functor $\mathrm{Can}:
\partial-\Mod(K\f{t})\xrightarrow{\quad}\partial-\Mod(K[t,t^{-1}])$ 
(cf. \cite{Katz-Can})  
is compatible with the functors 
$\Rm$ and $\Mon$.
\end{remark}

\if{
\section{Regular singular modules with prescribed exponents}
Let $A$ be a $K$ algebra without exponents nor logarithm.
Let us denote by $\mathcal{G}_{\textrm{reg}}$ the Tannakian group of the 
category $\Reg(A)$. By the equivalence of Theorem \ref{Thm. Equivalence}, 
$\Reg(A)$ and $\mathrm{Rep}_K^{\mathrm{fin}}(\mathbb{Z})$ share the 
same Tannakian group. Therefore $\mathcal{G}_{\textrm{reg}}$ is 
isomorphic to the algebraic envelop of $\mathbb{Z}$, which is 
known to be
\begin{equation}
\mathcal{G}_{\textrm{reg}}\;=\;
\mathcal{D}(K^\times)\times \mathbb{G}_a\;,
\end{equation}
$\mathcal{D}(K^\times)$ is the diagonal group associated with 
the group $K^\times$. 
Let $H$ be a subgroup of $K/\mathbb{Z}$, $\widetilde{H}\subset\widetilde{K/\mathbb{Z}}$ its lifting, and  
$\gamma(H)$ its image in $K^\times$ by the 
isomorphisms \eqref{eq : isom gamma def}.
We denote by 
\begin{equation}
\Reg(A;H)
\end{equation}
the full subcategory of 
$\Reg(A)$ formed by regular singular 
differential modules over $A$ 
whose exponents lie in $H$. Then $\Reg(A;H)$ is a 
Tannakian category whose Tannakian group 
$\mathcal{G}_{\textrm{reg},H}$ is 
\begin{equation}
\mathcal{G}_{\textrm{reg},H}\;:=\;
\mathcal{D}(\gamma(H))\times \mathbb{G}_a\;.
\end{equation}
The affine algebra of $\mathcal{D}(H)$ 
being the group algebra $K\lr{H}$ of $H$, the affine algebra 
of  $\mathcal{G}_{\textrm{reg},H}$ 
is then (with an evident meaning of the symbols)
\begin{equation}
E_{A,H}\;:=\;
A[t^{\widetilde{H}},\ell]\;.
\end{equation}
The functors $\Mon$ and $\Rm$ establish an equivalence 
of categories between $\Reg(A;H)$ and 
\begin{equation}
\Rep_{K}^{\textrm{fin}}(\mathbb{Z},\gamma(H))\;,
\end{equation}
where the latter is the category of finite dimensional 
$K$-linear representations of $\mathbb{Z}$ such that 
the eigenvalues of the monodromy operator $[1]$ belong 
to $\gamma(H)\subset K^\times$.

\begin{remark}
The full subcategories 
$\Reg(A;H)^{ss}$ and $\Rep_K^{\mathrm{fin}}(\mathbb{Z},\gamma(H))^{ss}$ formed by semisimple objects are Tannakian with group 
$\mathcal{D}(\gamma(H))$.
\end{remark}

In particular, let $H=\mathbb{Q}/\mathbb{Z}$ be the 
torsion subgroup of $K/\mathbb{Z}$. Then 
$\gamma(\mathbb{Q}/\mathbb{Z})=\mu_\infty\subset 
K^\times$ is the sub-group of all roots of unity. 

One sees that $\Rep_{K}^{\textrm{fin}}(\mathbb{Z},
\mu_\infty)$ is precisely the category of 
finite dimensional 
$K$-linear representations $V$ 
of $\mathbb{Z}$ such that the action of $\mathbb{Z}$ 
on the semi-semplified $V^{ss}$ 
factors through a finite quotient. In other words 
the map $\rho\;:\;
\mathbb{Z}\to\mathrm{Aut}_K(V^{ss})$ 
(cf. \eqref{eq : rho def representation}) has finite 
image. Therefore, one has 
\begin{equation}
\mathcal{G}_{\mathrm{reg},
\mathbb{Q}/\mathbb{Z}}\;=\;
\widehat{\mathbb{Z}}\times\mathbb{G}_a\;,
\end{equation}
where $\widehat{\mathbb{Z}}$ is the profinite 
completion of $\mathbb{Z}$.

\section{Decomposition by the maximal regular singular sub-module}
If $A=K\f{t}$, every differential module is direct sum of a 
regular one and a completely irregular one. 
we have a decomposition
\begin{proposition}
\end{proposition}
}\fi

\if{
\section{Decomposition by the maximal regular singular sub-module}

We can define the maximal regular singular sub-module as the Reg of its 
Monodromy representation.

Is it a direct summand ?

Is the quotient free of regularity ?

This has to do with exactness of REG and monodromy functors.

Moreover, we may have more extensions than expected ...
}\fi
\begin{small}
\bibliographystyle{amsalpha}
\bibliography{bib}
\end{small}

\end{document}